\newtheorem{defn}{Definition}
\newtheorem{thm}{Theorem}
\newtheorem{conj}{Conjecture}
\newcommand{\eref}[1]{(\ref{#1})}
\renewcommand{\b}[1]{\bar{#1}}
\renewcommand{\leq}{\leqslant}
\renewcommand{\geq}{\geqslant}
\definecolor{pink}{RGB}{255,128,128}
\definecolor{lred}{RGB}{255,0,0}
\definecolor{dred}{RGB}{176,0,0}
\definecolor{burg}{RGB}{43,0,0}
\def\ie{{\it i.e.}\ }
\def\eg{{\it e.g.}}
\def\fs{\footnotesize}
\def\u{\scalebox{0.65}{$\blacktriangle$}}
\def\d{\scalebox{0.65}{$\blacktriangledown$}}
\def\l{\scalebox{0.65}{$\blacktriangleleft$}}
\def\r{\scalebox{0.65}{$\blacktriangleright$}}
\begin{document}

\title{Refined Cauchy and Littlewood identities, plane partitions and symmetry classes of alternating sign matrices}

\author{D.~Betea and M.~Wheeler}

\address{Laboratoire de Physique Th\'eorique et Hautes \'Energies, CNRS UMR 7589 and Universit\'e Pierre et Marie Curie (Paris 6), 4 place Jussieu, 75252 Paris cedex 05, France}
\email{betea@lpthe.jussieu.fr, mwheeler@lpthe.jussieu.fr}

\keywords{Cauchy and Littlewood identities, symmetric functions, plane partitions, alternating sign matrices, six-vertex model}

\begin{abstract}
We prove and conjecture some new symmetric function identities, which equate the generating series of {\bf 1.} Plane partitions, subject to certain restrictions and weightings, and {\bf 2.} Alternating sign matrices, subject to certain symmetry properties. The left hand side of each of our identities is a simple refinement of a relevant Cauchy or Littlewood identity, allowing them to be interpreted as generating series for plane partitions. The right hand side of each identity is a partition function of the six-vertex model, on a relevant domain. These can be interpreted as generating series for alternating sign matrices, using the well known bijection with six-vertex model configurations.
\end{abstract}

\maketitle


\section{Introduction}

The theory of symmetric functions (and, in finitely many variables, symmetric polynomials) has come to play a major role in statistical mechanics in recent years, bridging combinatorics on the one side and probability and random processes on the other. For the ultimate review of the combinatorial theory, we refer the reader to \cite{mac} (see also \cite{sta}). At the lowest level in the theory, one finds the Schur polynomials. They can be defined in many different ways and appear naturally in combinatorics (as generating functions -- see \cite{sta}), representation theory (as characters of $GL(n)$, see \eg\ \cite{mac}) and classical integrable systems (as solutions of hierarchies of partial differential equations -- see \cite{jm} and references therein). Recently, many connections have also been found with probability theory and exactly solvable statistical mechanical models. These include Schur measures on partitions \cite{oko} and the Schur process of Okounkov and Reshetikhin for unboxed plane partitions \cite{or,bor}. A one-parameter generalization of Schur polynomials called Hall--Littlewood polynomials (see Chapter III of \cite{mac}) will play an important role in this paper. They also appear in the statistical mechanics of plane partitions (see \eg\ \cite{vul}) and in the theory of integrable models (see \eg\ \cite{fw, tsi} for type $A$ and \cite{vde} for type $BC$).

The purpose of this work is to relate certain infinite sum (Cauchy-like) identities involving symmetric polynomials with certain classes of plane partitions and alternating sign matrices (henceforth abbreviated as ASMs). To wit, we expand three quantities that appear as partition functions of three classes of ASMs (and are naturally symmetric in the indeterminates) in terms of certain classes of symmetric functions and connect these expansions to the theory of plane partitions. ASMs are in a simple bijection with six-vertex model configurations, and it is the latter point of view we adopt. They were introduced by Mills, Robbins and Rumsey in \cite{mrr} and we refer the reader to \cite{kup2} for more details on symmetry classes of ASMs. For another connection between symmetric polynomials (more precisely, characters of the classical groups) and the enumeration of alternating sign matrices we refer the reader to Okada \cite{oka}.

On the symmetric polynomials side, we will focus attention on Schur polynomials and their Hall--Littlewood generalization, as well as symplectic characters ($BC_n$-symmetric Laurent polynomials that are the irreducible characters of the symplectic group $Sp(2n)$) and their generalization -- the $BC_n$-symmetric Hall--Littlewood polynomials (see \eg\ \cite{ven}). On the ASM side, we look at ordinary ASMs (six-vertex model configurations with domain wall boundary conditions), U-turn ASMs (henceforth abbreviated as UASMs) and off-diagonally symmetric ASMs (henceforth, OSASMs) -- the terminology is borrowed from \cite{kup2}. We thus have three partition functions: $Z_{\rm ASM}, Z_{\rm UASM}$ and  $Z_{\rm OSASM}$ respectively. All three have determinantal/Pfaffian structure and are symmetric functions and we expand them (up to some factors) in two ways. First, we expand $Z_{\rm ASM}$ in two sets of Schur polynomials, $Z_{\rm UASM}$ in Schur polynomials and symplectic characters and $Z_{\rm OSASM}$ in (one set of) Schur polynomials. This is contained in Theorems \ref{thm1}, \ref{thm3} and \ref{thm4} respectively. Second, we expand the same quantities (up to some different factors) in terms of the appropriate $t$-generalizations (Hall--Littlewood polynomials for Schur and $BC_n$-symmetric Hall--Littlewood for symplectic characters). This is the content of Theorem \ref{thm2} (due to Warnaar \cite{war}) and Conjectures \ref{conj1} and \ref{conj2}. In a subsequent paper with P.~Zinn-Justin, we will prove Theorem \ref{thm2} and Conjecture \ref{conj2} in a unified way. We relate all of these expansions except for the $BC_n$-symmetric Hall--Littlewood one with classes of plane partitions (analogously to how one relates the usual Cauchy identity with plane partitions -- see for example \cite{or} and \cite{bor}). 

Of the identities (expansions) discussed, four are of Cauchy-type (for Schur polynomials and symplectic characters, and their respective $t$-generalizations) and two of Littlewood-type (for Schur polynomials indexed by partitions with even columns, and the relevant $t$-generalization).

We start with the classical Cauchy identity (summation formula -- see \cite{mac}) for Schur polynomials:
\begin{align}
\sum_{\lambda} 
s_{\lambda} (x_1,\dots,x_m) 
s_{\lambda} (y_1,\dots,y_n) 
= 
\prod_{i=1}^{m}
\prod_{j=1}^{n} 
\frac{1}{1-x_i y_j}. 
\end{align}
It has many interpretations. For our purposes, the left hand side is interpreted as a formal generating series for plane partitions with base contained in an $m \times n$ rectangle. The right hand side is then the partition function (a many parameter generalization of MacMahon's formula \cite{macm}). The Schur polynomials $s_{\lambda}$ have a one-parameter $t$-generalization in the Hall--Littlewood polynomials $P_{\lambda}$ (Chapter III of \cite{mac}). These also satisfy a Cauchy identity:
\begin{align} 
\label{mac_cauchy}
\sum_{\lambda} 
\prod_{i=1}^{\infty} \prod_{j=1}^{m_i(\lambda)} (1-t^j) 
P_{\lambda} (x_1,\dots,x_m;t) 
P_{\lambda} (y_1,\dots,y_n;t) 
=
\prod_{i=1}^{m}
\prod_{j=1}^{n} 
\frac{1-tx_i y_j}{1-x_i y_j}
\end{align}
where $m_i(\lambda)$ is the number of parts of $\lambda$ equal to $i$ (note the double product is finite as $\lambda$ is a finite sequence of positive integers listed in decreasing order). The Cauchy identity for Hall--Littlewood polynomials degenerates ($t=0$) to the Cauchy identity for Schur polynomials and can also be viewed as a generating series for plane partitions \cite{vul}. 

Based on ideas of Kirillov and Noumi \cite{kn}, Warnaar \cite{war} observed that if one applies an appropriately specialized difference operator to both sides of the $m=n$ identity \eref{mac_cauchy}, one obtains the following:
\begin{align} 
\label{mac_cauchy_6V}
\sum_{\lambda} 
\prod_{i=0}^{\infty} \prod_{j=1}^{m_i(\lambda)} (1-t^j) 
P_{\lambda} (x_1,\dots,x_n;t) 
P_{\lambda} (y_1,\dots,y_n;t) 
\propto 
Z_{\rm ASM}(x_1,\dots,x_n; y_1,\dots,y_n;t).
\end{align}
This is the content of Theorem \ref{thm2}. On the right, apart from some simple proportionality factors, $Z_{\rm ASM}$ is the partition function for the six-vertex model on the square lattice with domain wall boundary conditions (the Izergin--Korepin determinant \cite{kor,ize}) -- alternatively, the partition function for ASMs with certain $x$ and $y$ weights ascribed to the entries. On the left, the infinite sum looks similar to the one appearing on the left of (\ref{mac_cauchy}) (the only difference is in \eref{mac_cauchy_6V} the product over $i$ starts at 0 -- we are counting parts of $\lambda$ of size 0 up to a maximal length of $\lambda$ equal to $n$). It can be seen as a generating function for appropriately weighted plane partitions refining in a sense those of Vuleti\'c. There is a Schur version of this identity (Theorem \ref{thm1}), though there is no limiting procedure one can take from the Hall--Littlewood to the Schur case. Both identities however are generalized by Warnaar's identity \cite{war} at the level of Macdonald polynomials, but we do not go into such generality in the present paper (the addition of another parameter, $q$, seems to complicate things unnecessarily for most of the results we discuss).  

Next there is the symplectic Cauchy identity (valid for $m \leq n$):
\begin{align}
\sum_{\lambda} 
s_{\lambda} (x_1,\dots,x_m)
sp_{\lambda} (y_1,\dots,y_n)  
= 
\frac{\prod_{1 \leq i<j \leq m} (1-x_i x_j)}
{\prod_{i=1}^{m} \prod_{j=1}^{n} (1-x_i y_j)(1- \frac{x_i}{y_j})}
\end{align} 
where $sp_{\lambda}$ is the irreducible character of the symplectic group $Sp(2n)$ indexed by partition $\lambda$ (see \cite{sun}). It too has an interpretation in terms of some class of plane partitions, and a refinement -- Theorem \ref{thm3} -- which is the expansion of the partition function for UASMs in terms of Schur polynomials and symplectic characters. We conjecture a $t$-generalization of it (Conjecture \ref{conj1}) which states that
\begin{multline} 
\sum_{\lambda} 
\prod_{i=0}^{\infty} 
\prod_{j=1}^{m_i(\lambda)} 
(1-t^j)
P_{\lambda}(x_1,\dots,x_n;t) 
K_{\lambda}(y_1,\dots,y_n;t) 
= 
Z_{\rm UASM} 
:= 
\\
\frac{\prod_{i,j=1}^{n} (1-t x_i y_j) (1-\frac{t x_i}{y_j}) }
{\prod_{1\leq i<j \leq n} (x_i-x_j) (y_i-y_j) (1 - t x_i x_j) (1 - \frac{1}{y_i y_j}) }
\det\left[ 
\frac{(1-t)}{(1- x_i y_j)(1-t x_i y_j)(1- \frac{x_i}{y_j})(1-\frac{t x_i}{y_j})} 
\right]_{1\leq i,j \leq n}
\end{multline}
where $P_{\lambda}$ is an ordinary (aforementioned) Hall--Littlewood polynomial, and $K_{\lambda}$ denotes the $BC_n$-symmetric Hall--Littlewood polynomial indexed by $\lambda$.

Finally, we discuss a Littlewood identity for Schur polynomials
\begin{align}
\sum_{\substack{\lambda\ \text{with} \\ \text{even columns}}} 
s_{\lambda}(x_1,\dots,x_n) 
= 
\prod_{1 \leq i < j \leq n} 
\frac{1}{1-x_i x_j}
\end{align}
and how its slight modification in the spirit of \eqref{mac_cauchy_6V} relates certain symmetric plane partitions with off-diagonally symmetric ASMs (OSASMs). This is Theorem \ref{thm4}. We conjecture a companion identity at the level of Hall--Littlewood polynomials, which is Conjecture \ref{conj2} (Pf stands for Pfaffian):
\begin{multline}
\sum_{\substack{\lambda\ \text{with} \\ \text{even columns}}} \ \ 
\prod_{i=0}^{\infty}\ \prod_{j=2,4,6,\dots}^{m_i(\lambda)} 
(1-t^{j-1}) 
P_{\lambda}(x_1,\dots,x_{2n};t) 
= 
Z_{\rm OSASM} 
:= 
\\
\prod_{1 \leq i<j \leq 2n} 
\frac{(1-t x_i x_j)}{(x_i-x_j)} 
{\rm Pf} \left[ 
\frac{(x_i-x_j)(1-t)}{(1-x_i x_j) (1-t x_i x_j)} 
\right]_{1\leq i < j \leq 2n}.
\end{multline}

The paper is organized as follows: in Sections \ref{sec:ASM} and \ref{sec:UASM} we discuss the Cauchy identities for Schur polynomials and symplectic characters and their modifications which lead to partition functions for ASMs and UASMs, respectively. In Section \ref{sec:ASM} we state Warnaar's Hall--Littlewood generalization of the result, and in Section \ref{sec:UASM} we conjecture a similar result involving $BC_n$-symmetric Hall--Littlewood polynomials. In Section \ref{sec:OSASM} we discuss the ``even columns'' Littlewood identity for Schur polynomials and how its appropriate modification relates to the partition function of OSASMs. We also conjecture a similar identity at the level of Hall--Littlewood polynomials. We conclude each of the three sections by remarking what happens when some of the variables (spectral parameters) are zero. This leads to six-vertex model configurations on rectangular domains (rectangular ASMs and UASMs) in Sections \ref{sec:ASM} and \ref{sec:UASM}, and to odd-sized OSASMs in Section \ref{sec:OSASM}.

Throughout the paper, $\b{x} := \frac{1}{x}$. We reserve letters $\lambda, \mu, \dots$ for partitions. A partition $\lambda$ is either the empty partition ($\emptyset$) or a sequence of strictly positive numbers listed in decreasing order: $\lambda_1 \geq \lambda_2 \geq \cdots \geq \lambda_k > 0$. We call each $\lambda_i$ a \textit{part} and $\ell(\lambda):=k$ the \textit{length} (number of non-zero parts) of $\lambda$. Moreover, we call $|\lambda|:=\sum_{i=1}^k \lambda_i$ the \textit{weight} of the partition. We finally define the notion of interlacing partitions. Let $\lambda$ and $\mu$ be two partitions with $|\lambda| \geq |\mu|$. They are said to be \textit{interlacing} and we write $\lambda \succ \mu$ if and only if 
$$ \lambda_1 \geq \mu_1 \geq \lambda_2 \geq \mu_2 \geq \lambda_3 \geq \dots$$


\section{Refined Cauchy identities and alternating sign matrices}
\label{sec:ASM}

\subsection{Cauchy determinant} Let $x_1,\dots,x_n$ and $y_1,\dots,y_n$ be indeterminates. We start with the following well known determinantal formula (the \textit{Cauchy determinant}):
\begin{align}
\det\left[ \frac{1}{1-x_i y_j} \right]_{1\leq i,j \leq n}
=
\frac{\Delta(x)_n \Delta(y)_n}{\prod_{i,j=1}^n (1-x_i y_j)}
\label{cauch-det}
\end{align}
where we have denoted $\Delta(x)_n:= \prod_{1 \leq i<j \leq n} (x_i - x_j)$ to be the Vandermonde product in the $x$ variables (and similarly for the $y$ variables). One proves this in the usual way by comparing zeros and poles of the two sides (a method called factor exhaustion).

\subsection{Weyl formula for Schur polynomials}

The Schur polynomials $s_{\lambda}(x_1,\dots,x_n)$ comprise one of the standard bases for the ring of symmetric functions in $n$ variables \cite{mac}. They are the characters of irreducible representations of $GL(n)$, and as such can be evaluated using the Weyl character formula. This gives rise to the determinant expression
\begin{align*}
s_{\lambda}(x_1,\dots,x_n)
=
\frac{
\det\left[ x_i^{\lambda_j-j+n} \right]_{1 \leq i,j \leq n}
}
{
\prod_{1 \leq i<j \leq n} (x_i - x_j)
}.
\end{align*}

\subsection{Tableau formula for Schur polynomials}

A {\it semi-standard Young tableau} (SSYT) of shape $\lambda$ is an assignment of one symbol $\{1,\dots,n\}$ to each box of the Young diagram $\lambda$, subject to the following rules:
\begin{enumerate}[{\bf 1.}]
\item The symbols have the ordering $1 < \cdots < n$.
\item The entries in $\lambda$ increase weakly along each row and strictly down each column.
\end{enumerate}
The Schur polynomial $s_{\lambda}(x_1,\dots,x_n)$ admits an alternative, combinatorial formula, as a weighted sum over semi-standard Young tableaux $T$ of shape $\lambda$:
\begin{align}
s_{\lambda}(x_1,\dots,x_n)
=
\sum_{T}
\prod_{k=1}^{n}
x_k^{\#(k)},
\label{schur-tab}
\end{align}
where $\#(k)$ denotes the number of occurrences of $k$ in $T$.

\subsection{Tableaux as interlacing sequences of partitions}

Any semi-standard Young tableau of shape $\lambda$ can be decomposed into a sequence of interlacing partitions:\footnote{Throughout the paper we distinguish different partitions by superscripts, \ie $\lambda^{(1)}, \lambda^{(2)}, \dots$, while the parts of a particular partition $\lambda$ are denoted using subscripts, \ie $\lambda_1, \lambda_2, \dots$, as is standard in the literature.} 
\begin{align*}
T
=
\{
\emptyset \equiv \lambda^{(0)} \prec \lambda^{(1)} \prec 
\cdots
\prec \lambda^{(n)} \equiv \lambda
\}.
\end{align*}
The correspondence is illustrated by the example in Figure \ref{fig-tab}.

\begin{figure}[H]
\begin{tabular}{lllllllll}
\begin{tikzpicture}[scale=0.5]
\draw (0,4) -- (5,4);
\draw (0,3) -- (5,3);
\draw (0,2) -- (3,2);
\draw (0,1) -- (3,1);
\draw (0,0) -- (1,0);
\draw (0,0) -- (0,4);
\draw (1,0) -- (1,4);
\draw (2,1) -- (2,4);
\draw (3,1) -- (3,4);
\draw (4,3) -- (4,4);
\draw (5,3) -- (5,4);
\node at (0.5,3.5) {\color{pink} 1};
\node at (1.5,3.5) {\color{pink} 1};
\node at (2.5,3.5) {\color{lred} 2};
\node at (3.5,3.5) {\color{lred} 2};
\node at (4.5,3.5) {\color{burg} 4};
\node at (0.5,2.5) {\color{lred} 2};
\node at (1.5,2.5) {\color{lred} 2};
\node at (2.5,2.5) {\color{dred} 3};
\node at (0.5,1.5) {\color{dred} 3};
\node at (1.5,1.5) {\color{dred} 3};
\node at (2.5,1.5) {\color{burg} 4};
\node at (0.5,0.5) {\color{burg} 4};
\end{tikzpicture}
&
&
{\color{pink}
\begin{tikzpicture}[scale=0.5]
\draw (0,4) -- (2,4);
\draw (0,3) -- (2,3);
\draw (0,3) -- (0,4);
\draw (1,3) -- (1,4);
\draw (2,3) -- (2,4);
\end{tikzpicture}
}
\quad\quad
&
&
{\color{lred}
\begin{tikzpicture}[scale=0.5]
\draw (0,4) -- (4,4);
\draw (0,3) -- (4,3);
\draw (0,2) -- (2,2);
\draw (0,2) -- (0,4);
\draw (1,2) -- (1,4);
\draw (2,2) -- (2,4);
\draw (3,3) -- (3,4);
\draw (4,3) -- (4,4);
\end{tikzpicture}
}
\quad\quad
&
&
{\color{dred}
\begin{tikzpicture}[scale=0.5]
\draw (0,4) -- (4,4);
\draw (0,3) -- (4,3);
\draw (0,2) -- (3,2);
\draw (0,1) -- (2,1);
\draw (0,1) -- (0,4);
\draw (1,1) -- (1,4);
\draw (2,1) -- (2,4);
\draw (3,2) -- (3,4);
\draw (4,3) -- (4,4);
\end{tikzpicture}
}
\quad\quad
&
&
{\color{burg}
\begin{tikzpicture}[scale=0.5]
\draw (0,4) -- (5,4);
\draw (0,3) -- (5,3);
\draw (0,2) -- (3,2);
\draw (0,1) -- (3,1);
\draw (0,0) -- (1,0);
\draw (0,0) -- (0,4);
\draw (1,0) -- (1,4);
\draw (2,1) -- (2,4);
\draw (3,1) -- (3,4);
\draw (4,3) -- (4,4);
\draw (5,3) -- (5,4);
\end{tikzpicture}
}
\\ \\
$T$ 
& = 
& {\color{pink} $\lambda^{(1)}$} 
& $\prec$
& {\color{lred} $\lambda^{(2)}$}
& $\prec$
& {\color{dred} $\lambda^{(3)}$}
& $\prec$
& {\color{burg} $\lambda^{(4)}$}
\end{tabular}
\caption{Decomposing a SSYT into interlacing partitions. By filling all boxes of the partition 
$\lambda^{(k)}$ with the symbol $k$, and stacking the partitions one on top of the other, we recover the tableau on the left hand side.}
\label{fig-tab}
\end{figure}
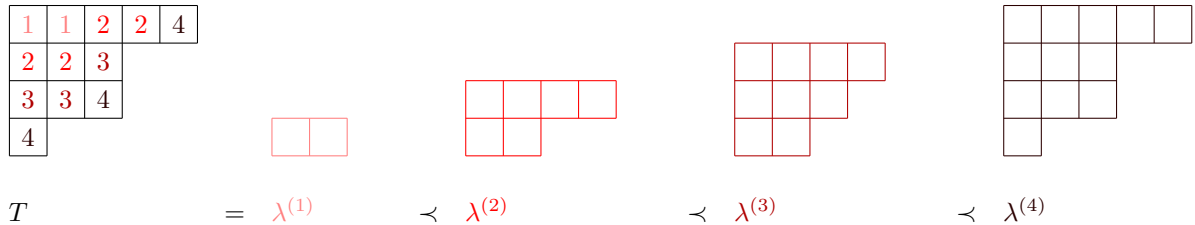

For any integer $k$, the number of instances of $k$ in the SSYT is given by the difference in the weight of successive partitions, $|\lambda^{(k)}| - |\lambda^{(k-1)}|$. Using this fact, the formula \eref{schur-tab} for the Schur polynomial becomes
\begin{align}
s_{\lambda}(x_1,\dots,x_n)
=
\sum_{T}
\prod_{k=1}^{n} x_k^{|\lambda^{(k)}|-|\lambda^{(k-1)}|}.
\label{schur-int}
\end{align}

\subsection{Plane partitions as conjoined tableaux}

A plane partition is a two dimensional array of non-negative integers $\pi(i,j)$, which satisfy the conditions 
\begin{align*}
\pi(i,j) \geq \pi(i+1,j),
\quad\quad
\pi(i,j) \geq \pi(i,j+1)
\end{align*}
for all $i,j \geq 1$, and such that $\pi(i,j)=0$ for $i,j$ sufficiently large.  By regarding each integer $\pi(i,j)$ as a column of cubes of height $\pi(i,j)$ over the Cartesian point $(i,j)$, one obtains a three dimensional visualization of a plane partition (see Figure \ref{fig:top_colors}).
\begin{figure}[H]
\includegraphics[scale=0.4]{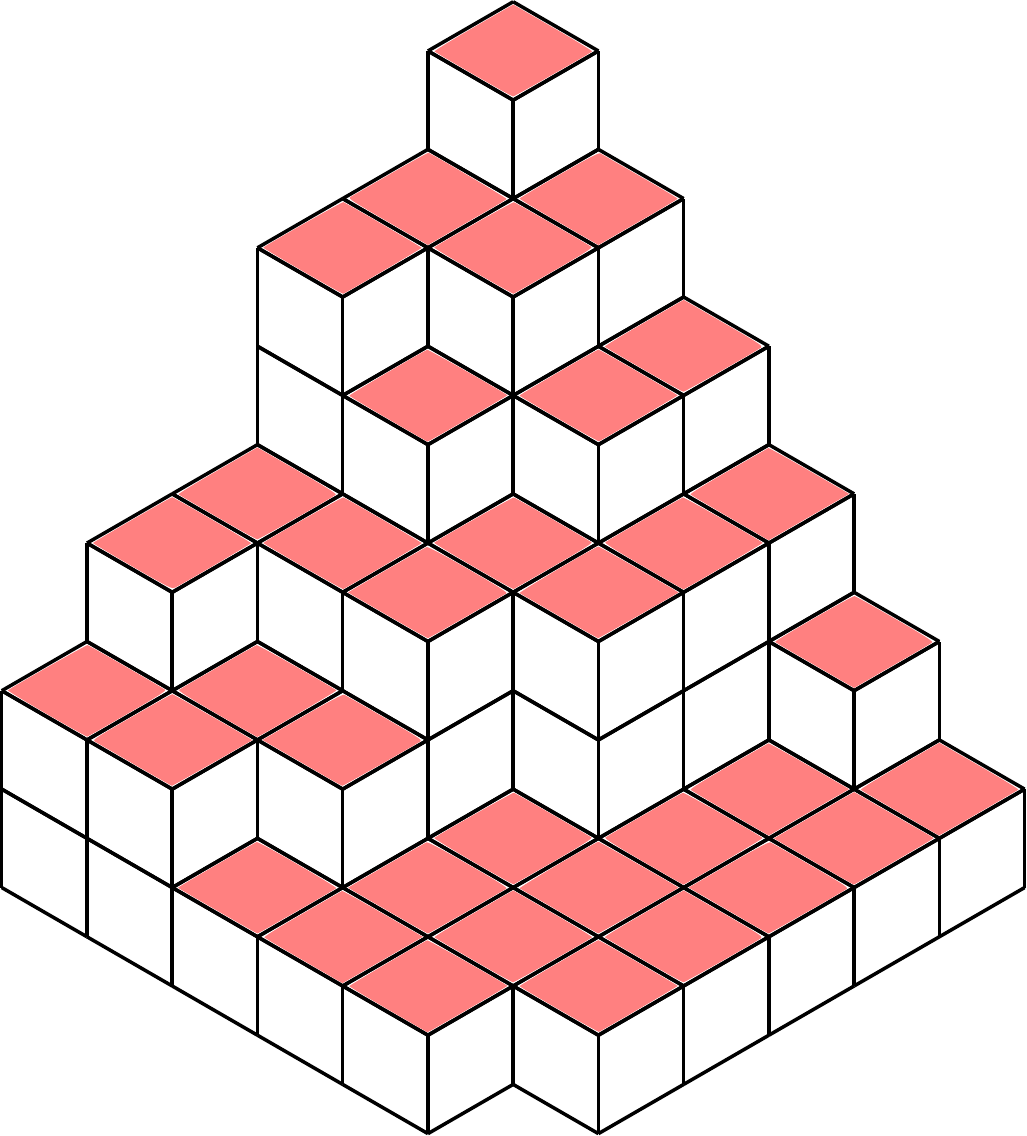} 
\caption{A three-dimensional representation of a plane partition. A column of $\pi(i,j)$ cubes is placed over the box $(i,j)$. All entries of size zero are not drawn.}
\label{fig:top_colors}
\end{figure}
An alternative formulation of plane partitions uses the notion of {\it interlacing diagonal slices} \cite{or}. Given a plane partition $\pi$ one can define its {\it diagonal slices} $\pi^{(i)}$, $i \in \mathbb{Z}$, which are partitions whose parts are given by
\begin{align*}
\pi^{(i)}_j
=
\left\{
\begin{array}{ll}
\pi(j-i,j),
&
i \leq 0
\\
\pi(j,i+j),
&
i \geq 0.
\end{array}
\right.
\end{align*}
The diagonal slices of any plane partition form an interlacing sequence of partitions
\begin{align*}
\cdots
\prec
\pi^{(-2)}
\prec
\pi^{(-1)}
\prec
\pi^{(0)}
\succ
\pi^{(1)}
\succ
\pi^{(2)}
\succ
\cdots
\end{align*}
where the slices become trivial, $\pi^{(i)} = \emptyset$, for sufficiently large $|i|$. Henceforth, we will always think of plane partitions from this point of view. We define the set $\bm{\pi}_{m,n}$ of plane partitions consisting of at most $m$ non-trivial left slices, and $n$ non-trivial right slices:
\begin{align}
\bm{\pi}_{m,n}
=
\{ 
\emptyset \equiv \lambda^{(0)} \prec \lambda^{(1)} \prec \cdots \prec \lambda^{(m)} 
\equiv 
\mu^{(n)} \succ \cdots \succ \mu^{(1)} \succ \mu^{(0)} \equiv \emptyset
\}.
\label{conj-tab}
\end{align}
Because of the identification of the partitions $\lambda^{(m)} = \mu^{(n)}$, one can think of \eref{conj-tab} as the set of all pairs of SSYT (in the alphabets $\{1,\dots,m\}$ and 
$\{1,\dots,n\}$, respectively) which have a common shape. In this sense, plane partitions are in one-to-one correspondence with {\it conjoined} SSYT.

\subsection{Cauchy identity for Schur polynomials and plane partitions}

Consider the Cauchy identity for Schur polynomials:
\begin{align}
\sum_{\lambda} s_{\lambda}(x_1,\dots,x_m) s_{\lambda}(y_1,\dots,y_n)
=
\prod_{i =1}^{m} \prod_{j=1}^{n} \frac{1}{1-x_i y_j}.
\label{s-cauch}
\end{align}
In view of the tableau rule \eref{schur-int} for Schur polynomials, the sum on the left hand side is taken over all pairs of conjoined SSYT. Since these correspond with plane partitions,
\eref{s-cauch} has a natural interpretation as a generating series of plane partitions:
\begin{align}
\label{s-pp-gs}
\sum_{\pi \in \bm{\pi}_{m,n} }
\prod_{i=1}^{m} x_i^{|\lambda^{(i)}|-|\lambda^{(i-1)}|}
\prod_{j=1}^{n} y_j^{|\mu^{(j)}|-|\mu^{(j-1)}|}
=
\prod_{i =1}^{m} \prod_{j=1}^{n} \frac{1}{1-x_i y_j}.
\end{align}
Taking the $q$-specialization $x_i = q^{m-i+1/2}$ and $y_j = q^{n-j+1/2}$, 
we recover volume-weighted plane partitions:
\begin{align}
\sum_{\pi \in \bm{\pi}_{m,n} }
q^{|\pi|}
=
\prod_{i=1}^{m}
\prod_{j=1}^{n}
\frac{1}{1-q^{m+n-i-j+1}}
=
\prod_{i=1}^{m}
\prod_{j=1}^{n}
\frac{1}{1-q^{i+j-1}}
\label{vol-pp}
\end{align}
Finally, taking the limit $m,n \rightarrow \infty$ of \eref{vol-pp} gives rise to the classical MacMahon generating series \cite{macm}:
\begin{align}
\sum_{\pi} q^{|\pi|}
=
\prod_{i=1}^{\infty}
\frac{1}{(1-q^i)^i}.
\label{macmahon}
\end{align}

\subsection{Hall--Littlewood polynomials}
\label{HL-def}

Hall--Littlewood polynomials are the common $t$-generalization of at least three families of symmetric polynomials. They can be defined as a sum over the symmetric group (see Chapter III of \cite{mac}):
\begin{align*}
P_{\lambda}(x_1,\dots,x_n;t)
=
\frac{1}{v_{\lambda}(t)}
\sum_{\sigma \in S_n}
\sigma \left(
x_1^{\lambda_1}
\dots
x_n^{\lambda_n}
\prod_{1\leq i<j \leq n}
\frac{x_i - t x_j}{x_i - x_j}
\right)
\end{align*}
where the function $v_{\lambda}(t)$ is given by
\begin{align*}
v_{\lambda}(t)
=
\prod_{i=0}^{\infty}
\prod_{j=1}^{m_i(\lambda)}
\frac{1-t^j}{1-t}.
\end{align*}
When $t=0$, one recovers the Schur polynomial $s_{\lambda}(x_1,\dots,x_n)$. On the other hand, setting $t=1$ gives the monomial symmetric polynomials 
$m_{\lambda}(x_1,\dots,x_n)$, while $t=-1$ gives rise to the Schur $P$-polynomials.

\subsection{Tableau rule for Hall--Littlewood polynomials}

Remarkably, Hall--Littlewood polynomials can also be expressed as a sum over SSYT \cite{mac}:
\begin{align}
\label{hl-tab}
P_{\lambda}(x_1,\dots,x_n;t)
=
\sum_{T}
\prod_{k=1}^{n}
\left(
x_k^{\#(k)}
\psi_{\lambda^{(k)} / \lambda^{(k-1)}}(t)
\right)
\end{align}
where the function $\psi_{\lambda/\mu}(t)$ is given by
\begin{align*}
\psi_{\lambda / \mu}(t)
=
\prod_{\substack{i \geq 1 \\ m_i(\mu) = m_i(\lambda)+1}}
\left(1-  t^{m_i(\mu)} \right)
\end{align*}
with the product taken over all $i\geq 1$, such that the multiplicity of the part $i$ in the partition $\mu$ is one greater than the multiplicity of that part in $\lambda$.

\subsection{Cauchy identity for Hall--Littlewood polynomials and $t$-weighted plane partitions}
\label{sec:HL-pp}

Consider the Cauchy identity for Hall--Littlewood polynomials,
\begin{align}
\sum_{\lambda}
b_{\lambda}(t)
P_{\lambda}(x_1,\dots,x_m;t)
P_{\lambda}(y_1,\dots,y_n;t)
=
\prod_{i=1}^{m}
\prod_{j=1}^{n}
\frac{1-t x_i y_j}{1-x_i y_j} ,
\label{hl-cauch1}
\end{align}
which can be written more explicitly as
\begin{align}
\sum_{\lambda}
\prod_{i=1}^{\infty}
\prod_{j=1}^{m_i(\lambda)}
(1-t^j)
P_{\lambda}(x_1,\dots,x_m;t)
P_{\lambda}(y_1,\dots,y_n;t)
=
\prod_{i=1}^{m}
\prod_{j=1}^{n}
\frac{1-t x_i y_j}{1-x_i y_j} .
\label{hl-cauch2}
\end{align}
Here too, one can study the Cauchy identity \eref{hl-cauch2} in the framework of plane partitions \cite{vul}. To do so, one must introduce the notion of {\it paths} on a plane partition.
\begin{defn}[Connected components, depth, paths]
Let $\pi$ be a plane partition. A connected component at height $h$ is a set of elements 
$\pi(i,j)=h$, whose coordinate boxes $(i,j)$ are connected. The depth\footnote{
In \cite{vul}, Vuleti\'c denoted this quantity {\it level} rather than {\it depth.} We have adopted the latter term, as we find it more intuitive.} 
of an element $\pi(i,j)>0$ is the smallest positive integer $d$ such that $\pi(i,j)$ and 
$\pi(i+d,j+d)$ are not in the same connected component.\footnote{
Obviously this does not lead to a well-defined value for the depth of elements $\pi(i,j)=0$, but this does not concern us for the moment. We extend the definition to height-0 elements in Section \ref{sec:refined-pp}.} 
A path at height $h$ and depth $d$ is a connected subset of a connected component at height $h$, whose elements have the same depth, $d$. We let $p_{d}(\pi)$ denote the number of paths in $\pi$ with height $h>0$ and depth $d$. 
\end{defn}

\begin{figure}[H]
\begin{tabular}{cccc}
\includegraphics[scale=0.4]{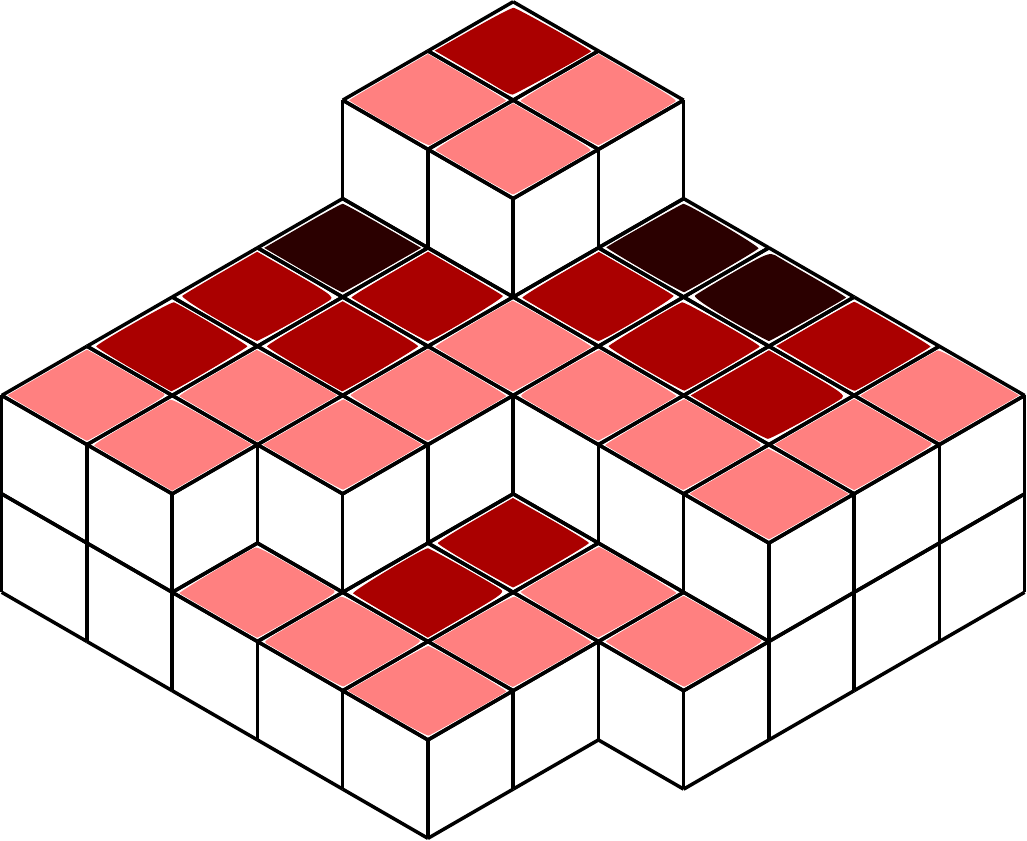}
& & &
\begin{tikzpicture}[scale=0.4]
\draw[fill=pink] (5,0) rectangle (6,1);
\draw[fill=dred] (5,2) rectangle (6,3);
\draw[fill=burg] (5,4) rectangle (6,5);
\node at (8.5,0.5) {Depth-1};
\node at (8.5,2.5) {Depth-2};
\node at (8.5,4.5) {Depth-3};
\end{tikzpicture}
\end{tabular}
\caption{A plane partition with 3 connected components. Each connected component can be subdivided into paths, at varying depths. A path of depth $k$ receives a weighting of 
$1-t^k$. This plane partition has 3 depth-1 paths, 4 depth-2 paths and 2 depth-3 paths, so it receives the $t$-weighting $(1-t)^3 (1-t^2)^4 (1-t^3)^2$.}
\label{fig:HL_levels}
\end{figure}

Using the tableau formula \eref{hl-tab} for Hall--Littlewood polynomials, one can interpret the left hand side of \eref{hl-cauch2} as a generating series of path-weighted plane partitions (see Figure \ref{fig:HL_levels}):
\begin{align}
\label{hl-pp-gs}
\sum_{\pi \in \bm{\pi}_{m,n}}
\prod_{i \geq 1} \left(1-t^i\right)^{p_i(\pi)}
\prod_{i=1}^{m} x_i^{|\lambda^{(i)}|-|\lambda^{(i-1)}|}
\prod_{j=1}^{n} y_j^{|\mu^{(j)}|-|\mu^{(j-1)}|}
=
\prod_{i=1}^{m}
\prod_{j=1}^{n}
\frac{1-t x_i y_j}{1-x_i y_j}.
\end{align}
Taking the $q$-specialization $x_i = q^{m-i+1/2}$ and $y_j = q^{n-j+1/2}$ as previously, one obtains a 
$t$-refinement of the generating series \eref{vol-pp}:
\begin{align*}
\sum_{\pi \in \bm{\pi}_{m,n}}
\prod_{i \geq 1} \left(1-t^i\right)^{p_i(\pi)}
q^{|\pi|}
=
\prod_{i=1}^{m}
\prod_{j=1}^{n}
\frac{1-t q^{i+j-1}}{1-q^{i+j-1}}.
\end{align*}
Furthermore, passing to the limit $m,n \rightarrow \infty$, we obtain a $t$-refinement of the MacMahon formula \eref{macmahon}:
\begin{align}
\label{vuletic-gs}
\sum_{\pi}
\prod_{i \geq 1} \left(1-t^i\right)^{p_i(\pi)}
q^{|\pi|}
=
\prod_{i=1}^{\infty}
\left(
\frac{1-t q^i}{1-q^i}
\right)^i.
\end{align}
The generating series \eref{vuletic-gs} was originally obtained in \cite{vul}. For a proof of this result using the formalism of vertex operators, see \cite{fw}.

\subsection{Macdonald's difference operators}

Let us briefly recall the construction of Macdonald's difference operators, following Chapter VI, Section 3 of \cite{mac}. We begin by defining the operator $T_{u,x_i}$, which is an endomorphism on the space of functions in $n$ variables $(x_1,\dots,x_n)$, whose action is to scale the variable $x_i$ by the parameter $u$:
\begin{align*}
T_{u,x_i} : f(x_1,\dots,x_n) \mapsto f(x_1,\dots,ux_i,\dots,x_n) . 
\end{align*}
From this, one can define a family of difference operators
\begin{align}
D_n^r
=
\sum_{
\substack{
S \subseteq [n] \\ 
|S| = r
}
}
t^{r(r-1)/2}
\prod_{\substack{
i \in S \\ j \not\in S
}}
\frac{tx_i - x_j}{x_i - x_j}
\prod_{i \in S}
T_{q,x_i}
\label{do}
\end{align}
where the sum is taken over all subsets $S \subseteq \{1,\dots,n\}$ of cardinality $r$. These operators can in turn be grouped in a generating series with free parameter $z$:
\begin{align}
D_n(z;q,t)
=
\sum_{r=0}^{n}
D_n^{r} z^r .
\label{do-gs}
\end{align}
The Macdonald polynomials $P_{\lambda}(x;q,t)$ are eigenfunctions of the difference operators:
\begin{align}
D_n(z;q,t) P_{\lambda}(x;q,t)
=
\prod_{i=1}^{n} (1+z q^{\lambda_i} t^{n-i})
P_{\lambda}(x;q,t).
\label{mac-eig}
\end{align}
In what follows we will be interested in the special cases $q=t$ and $q=0$ of \eref{mac-eig}, which correspond to the restriction to Schur and Hall--Littlewood polynomials, respectively.

\subsection{Refined Cauchy identities}
\label{sec:refined-cauchy}

We now state and prove two theorems central to the paper. Both are refinements of the Cauchy identity 
\eref{s-cauch} (in the strict sense that both degenerate to it when $t=0$, which can be easily deduced using the Cauchy formula \eref{cauch-det}). The first theorem involves Schur polynomials while the second Hall--Littlewood polynomials. The second theorem can also be thought of as a deformation of the Cauchy identity for Hall--Littlewood polynomials \eref{hl-cauch2} (in view of the strong similarities between the two), although it is not a refinement of \eref{hl-cauch2} in the strict sense (since it does not degenerate to \eref{hl-cauch2} in any limit).  

The first theorem is the expansion of the Izergin--Korepin determinant \cite{kor,ize} in terms of Schur polynomials. It is well known to experts in the area and follows trivially from results of Warnaar \cite{war}. 

\begin{thm}
\label{thm1}

\begin{align}
\label{s-cauchy-refine}
\sum_{\lambda} 
\prod_{i=1}^{n} (1-t^{\lambda_i-i+n+1})
s_{\lambda}(x_1,\dots,x_n) s_{\lambda}(y_1,\dots,y_n)
=
\frac{1}{\Delta(x)_n \Delta(y)_n}
\det\left[
\frac{(1-t)}{(1-x_i y_j)(1-t x_i y_j)}
\right]_{1\leq i,j \leq n}.
\end{align}

\end{thm}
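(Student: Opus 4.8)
The plan is to prove \eqref{s-cauchy-refine} by applying a suitably specialized Macdonald difference operator to the Cauchy identity \eqref{s-cauch}, exploiting the eigenvalue relation \eqref{mac-eig} in the Schur case $q=t$. Concretely, I would first recall from \eqref{mac-eig} that when $q=t$ the Macdonald polynomial $P_\lambda(x;t,t)$ is the Schur polynomial $s_\lambda(x)$, and the generating series $D_n(z;t,t)$ acts on $s_\lambda(x_1,\dots,x_n)$ by the scalar $\prod_{i=1}^n(1+z\,t^{\lambda_i+n-i})$. Setting $z=-t$ (or $z=-1$, up to an overall shift; the precise choice is dictated by matching the exponent $\lambda_i-i+n+1$), this scalar becomes, up to relabeling, exactly $\prod_{i=1}^n(1-t^{\lambda_i-i+n+1})$, which is precisely the refinement factor on the left-hand side of \eqref{s-cauchy-refine}. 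So applying $D_n(z;t,t)$ in, say, the $x$-variables to \eqref{s-cauch} reproduces the desired left-hand side term by term.

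The second step is to compute the action of $D_n(z;t,t)$ on the right-hand side $\prod_{i=1}^m\prod_{j=1}^n (1-x_iy_j)^{-1}$ with $m=n$. Here one uses the explicit form \eqref{do}--\eqref{do-gs}: the operator is a sum over subsets $S\subseteq[n]$ weighted by $t^{|S|(|S|-1)/2}\prod_{i\in S,\,j\notin S}\frac{tx_i-x_j}{x_i-x_j}$, with the shift operators $\prod_{i\in S}T_{t,x_i}$ replacing each $x_i$ ($i\in S$) by $tx_i$. Acting on the product, each factor $(1-x_iy_j)^{-1}$ with $i\in S$ becomes $(1-tx_iy_j)^{-1}$, while the factors with $i\notin S$ are untouched. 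Thus the image is $\Big(\prod_{i,j}\frac{1}{1-x_iy_j}\Big)$ times a sum over $S$ of rational functions in the $x$'s and $y$'s. The third step is to recognize this sum over subsets as a Cauchy-type determinant expansion: one wants to show
\begin{align*}
\sum_{S\subseteq[n]}
t^{|S|(|S|-1)/2} z^{|S|}
\prod_{\substack{i\in S\\ j\notin S}}\frac{tx_i-x_j}{x_i-x_j}
\prod_{i\in S,\,j\in[n]}\frac{1-x_iy_j}{1-tx_iy_j}
=
\frac{\Delta(x)_n\Delta(y)_n}{\prod_{i,j}(1-x_iy_j)^{-1}\cdot(\text{something})}
\cdot
\det\!\Big[\tfrac{1-t}{(1-x_iy_j)(1-tx_iy_j)}\Big],
\end{align*}
i.e. that after collecting everything the result is exactly the right-hand side of \eqref{s-cauchy-refine}; the cleanest route is to evaluate the $\det$ on the right of \eqref{s-cauchy-refine} directly, show it too expands (by multilinearity in rows, splitting $\frac{1-t}{(1-x_iy_j)(1-tx_iy_j)}=\frac{1}{1-x_iy_j}-\frac{t}{1-tx_iy_j}$ or a partial-fraction variant) into a sum over subsets of Cauchy determinants, and match the two subset sums term by term.

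Alternatively — and this is probably the slicker proof to write down — one can bypass the difference-operator machinery entirely and argue by factor exhaustion, exactly as for the Cauchy determinant \eqref{cauch-det}: both sides of \eqref{s-cauchy-refine}, after clearing the denominators $\Delta(x)_n\Delta(y)_n\prod_{i,j}(1-x_iy_j)(1-tx_iy_j)$, are polynomials in the $x_i$, $y_j$, $t$ of controlled degree, symmetric (separately) in the $x$'s and in the $y$'s and antisymmetric before clearing the Vandermondes; one then checks they agree at enough specializations — e.g. at $t=0$ (where it reduces to \eqref{s-cauch} via \eqref{cauch-det}), and at $x_i=1/y_j$ type specializations — to conclude equality. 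The main obstacle in either approach is the bookkeeping in the third step: correctly identifying the proportionality constant hidden in the ``$\propto$'' (the powers of $t$, the $\Delta(x)_n\Delta(y)_n$ factors, and the $\prod(1-tx_iy_j)$ factors) and verifying that the subset-sum produced by $D_n(z;t,t)$ matches the subset-sum produced by expanding the determinant. I expect the cleanest bookkeeping comes from first proving the determinant identity
\[
\det\!\left[\frac{1-t}{(1-x_iy_j)(1-tx_iy_j)}\right]_{1\le i,j\le n}
=
\sum_{S\subseteq[n]}(-t)^{|S|}\,t^{\binom{|S|}{2}}\prod_{\substack{i\in S,\,j\notin S}}\frac{tx_i-x_j}{x_i-x_j}\cdot\frac{\Delta(x)_n\Delta(y)_n}{\prod_{i,j}(1-x_iy_j)(1-tx_iy_j)}\cdot(\cdots)
\]
— essentially a ``two-pole'' generalization of \eqref{cauch-det} — and then observing that its right-hand side is manifestly $D_n(-t;t,t)$ applied to $\frac{\Delta(x)_n\Delta(y)_n}{\prod(1-x_iy_j)}$ divided by $\Delta(x)_n\Delta(y)_n$, which via \eqref{mac-eig} equals the refined sum on the left of \eqref{s-cauchy-refine}.
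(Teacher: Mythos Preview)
Your difference-operator approach is correct in outline and is in fact mentioned by the paper as one route: acting with $D_n(-t;t,t)$ on \eqref{s-cauch} produces the refined left-hand side via \eqref{mac-eig}, and the right-hand side computation can be completed by writing $\frac{1-t}{(1-x_iy_j)(1-tx_iy_j)}=\frac{1}{1-x_iy_j}-\frac{t}{1-tx_iy_j}$, expanding the determinant as a sum over subsets of Cauchy determinants \eqref{cauch-det}, and matching with the subset sum defining $D_n(-t;t,t)$. The paper carries out exactly this bookkeeping in its proof of Theorem~\ref{thm2}, so your ``$(\text{something})$'' and ``$(\cdots)$'' can be filled in; but they do hide the only nontrivial work.

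Your factor-exhaustion alternative, however, has a genuine gap: the left-hand side of \eqref{s-cauchy-refine} is an \emph{infinite} sum over partitions and hence a formal power series in the $x_i,y_j$, not a polynomial. Clearing the finite denominator $\Delta(x)_n\Delta(y)_n\prod_{i,j}(1-x_iy_j)(1-tx_iy_j)$ does not yield a polynomial on the left, so the degree-counting and specialization argument you sketch (modelled on \eqref{cauch-det}) is not available as stated.

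More to the point, the paper's own proof is considerably simpler than either of your routes and you have not mentioned it. One expands the determinant entry as a formal power series,
\[
\frac{1-t}{(1-x_iy_j)(1-tx_iy_j)}=\sum_{k\ge 0}(1-t^{k+1})\,x_i^k y_j^k,
\]
applies Cauchy--Binet to obtain $\sum_{k_1>\cdots>k_n\ge 0}\prod_i(1-t^{k_i+1})\det[x_i^{k_j}]\det[y_j^{k_i}]$, and reindexes via $k_i=\lambda_i-i+n$. This is a three-line argument with no difference operators and no subset bookkeeping, and it is the one the paper actually gives.
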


\begin{proof} 
This can be proved in at least two ways. One possibility is to act on the $m=n$ Cauchy identity \eref{s-cauch} with the generating series \eref{do-gs}, evaluated at $z=-t$ and $q=t$. We give a simpler proof, by treating the entries of the determinant as formal power series:
\begin{align*}
\det\left[
\frac{(1-t)}{(1-x_i y_j)(1-t x_i y_j)}
\right]_{1\leq i,j \leq n}
&=
\det\left[
\sum_{k=0}^{\infty}
(1-t^{k+1}) x_i^k y_j^k
\right]_{1\leq i,j \leq n}
\\
&=
\sum_{k_1 > \cdots > k_n \geq 0}
\det\left[
x_i^{k_j}
\right]_{1\leq i,j \leq n}
\det\left[
y_j^{k_i}
\right]_{1\leq i,j \leq n}
\prod_{i=1}^{n}
(1-t^{k_i+1})
\end{align*}
where the final line follows from the Cauchy--Binet identity. Shifting the summation indices in the standard way $k_i = \lambda_i - i + n$, we obtain
\begin{align*}
\det\left[
\frac{(1-t)}{(1-x_i y_j)(1-t x_i y_j)}
\right]_{1\leq i,j \leq n}
=
\sum_{\lambda}
\det\left[
x_i^{\lambda_j-j+n}
\right]_{1\leq i,j \leq n}
\det\left[
y_j^{\lambda_i-i+n}
\right]_{1\leq i,j \leq n}
\prod_{i=1}^{n}
(1-t^{\lambda_i-i+n+1})
\end{align*}
and the proof is complete after dividing by the Vandermonde factor 
$\Delta(x)_n \Delta(y)_n$. 

\end{proof}

We now state the Hall--Littlewood case (not the most general, as in \cite{war} this is done at the level of Macdonald polynomials): 

\begin{thm}[Kirillov--Noumi, Warnaar]
\label{thm2}

\begin{multline}
\sum_{\lambda}
\prod_{i=0}^{\infty}
\prod_{j=1}^{m_i(\lambda)}
(1-t^j)
P_{\lambda}(x_1,\dots,x_n;t)
P_{\lambda}(y_1,\dots,y_n;t)
\\
=
\frac{\prod_{i,j=1}^{n} (1-t x_i y_j)}
{\Delta(x)_n \Delta(y)_n}
\det
\left[
\frac{(1-t)}{(1-x_i y_j)(1-t x_i y_j)}
\right]_{1\leq i,j \leq n}.
\label{knw-id}
\end{multline} 

\end{thm}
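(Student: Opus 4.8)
The strategy is to deduce Theorem~\ref{thm2} from the Hall--Littlewood Cauchy identity \eref{hl-cauch2} by applying a suitably specialized Macdonald difference operator, in exact analogy with the alternative proof mentioned for Theorem~\ref{thm1}. The key observation is that the left side of \eref{knw-id} differs from the left side of the $m=n$ case of \eref{hl-cauch2} only in that the product over multiplicities starts at $i=0$ rather than $i=1$; the extra factor is $\prod_{j=1}^{m_0(\lambda)}(1-t^j)$, where $m_0(\lambda)$ counts the ``zero parts'' of $\lambda$ when $\lambda$ is regarded as a sequence of length $n$, i.e. $m_0(\lambda) = n - \ell(\lambda)$. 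So what must be produced by the operator is precisely the eigenvalue-type factor $\prod_{j=1}^{n-\ell(\lambda)}(1-t^j)$ multiplying each term of the Hall--Littlewood Cauchy sum.

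\textbf{Main steps.} First I would recall the $q=0$ specialization of the Macdonald eigenvalue equation \eref{mac-eig}: with $q=0$ the polynomials $P_\lambda(x;0,t)$ are the Hall--Littlewood polynomials $P_\lambda(x;t)$, and $D_n(z;0,t)P_\lambda = \prod_{i=1}^n(1+z\,[\![\lambda_i=0]\!]\,t^{n-i})\,P_\lambda$ — that is, only the indices $i$ with $\lambda_i=0$ contribute a nontrivial factor, since $q^{\lambda_i}=0$ unless $\lambda_i=0$. Concretely, writing $\ell=\ell(\lambda)$, the eigenvalue is $\prod_{i=\ell+1}^{n}(1+z\,t^{n-i}) = \prod_{k=0}^{n-\ell-1}(1+z\,t^{k})$. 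Second, I would choose $z=-t$, so that the eigenvalue becomes $\prod_{k=0}^{n-\ell-1}(1-t^{k+1}) = \prod_{j=1}^{n-\ell}(1-t^j) = \prod_{j=1}^{m_0(\lambda)}(1-t^j)$ — exactly the missing factor. Third, I would apply the operator $D_n(-t;0,t)$, acting on the $x$-variables, to both sides of the $m=n$ Hall--Littlewood Cauchy identity \eref{hl-cauch2}. On the left side, term-by-term, each $P_\lambda(x;t)$ picks up the factor $\prod_{j=1}^{m_0(\lambda)}(1-t^j)$, converting $\prod_{i\geq 1}\prod_j(1-t^j)$ into $\prod_{i\geq 0}\prod_j(1-t^j)$ and yielding precisely the left side of \eref{knw-id}. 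Fourth — and this is the computational heart — I would evaluate the action of $D_n(-t;0,t)$ on the right side $\prod_{i,j=1}^n \frac{1-tx_iy_j}{1-x_iy_j}$ and check that it equals $\frac{\prod_{i,j}(1-tx_iy_j)}{\Delta(x)_n\Delta(y)_n}\det\bigl[\frac{1-t}{(1-x_iy_j)(1-tx_iy_j)}\bigr]$. A clean way to organize this is to first rewrite the target determinant on the right of \eref{knw-id} as a single sum, exactly as in the proof of Theorem~\ref{thm1}: by Cauchy--Binet, the right side of \eref{knw-id} equals $\sum_\lambda \prod_{i=1}^n(1-t^{\lambda_i-i+n+1})\,\frac{\det[x_i^{\lambda_j-j+n}]\det[y_j^{\lambda_i-i+n}]}{\Delta(x)_n\Delta(y)_n}\cdot\frac{\prod_{i,j}(1-tx_iy_j)}{\text{(Schur normalization)}}$; more efficiently, I would instead expand both sides in the Hall--Littlewood basis in $y$ and reduce the identity to the statement that $D_n(-t;0,t)$ acting on $\prod_{i,j}\frac{1-tx_iy_j}{1-x_iy_j}$ reproduces the operator eigenvalue factor inside each Hall--Littlewood coefficient. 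Since \eref{hl-cauch2} with $m=n$ already expresses $\prod_{i,j}\frac{1-tx_iy_j}{1-x_iy_j}$ as $\sum_\lambda b_\lambda(t)P_\lambda(x;t)P_\lambda(y;t)$, applying $D_n(-t;0,t)$ in the $x$-variables gives $\sum_\lambda b_\lambda(t)\,\bigl(\prod_{j=1}^{m_0(\lambda)}(1-t^j)\bigr)P_\lambda(x;t)P_\lambda(y;t)$ for free; the entire content then becomes a \emph{closed-form evaluation} of this operator applied to the product side, which is a finite rational-function manipulation. Finally, I would confirm the $t=0$ degeneration recovers \eref{s-cauch} (both sides collapse to the Cauchy determinant times $\Delta(x)_n\Delta(y)_n$ cancellations), providing a sanity check.

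\textbf{Expected main obstacle.} The routine but genuinely delicate part is the explicit evaluation of $D_n(-t;0,t)$ applied to the product $\prod_{i,j=1}^n\frac{1-tx_iy_j}{1-x_iy_j}$ and the verification that the result is the claimed determinant over $\Delta(x)_n\Delta(y)_n$. The operator is a sum over all subsets $S\subseteq[n]$ with the rational prefactor $\prod_{i\in S,\,j\notin S}\frac{tx_i-x_j}{x_i-x_j}$, and each term rescales the variables $x_i\mapsto 0$ for $i\in S$ (since $q=0$), which produces a telescoping/cancellation structure among the subsets. Controlling this sum — showing the off-diagonal pole structure in the $x_i$ matches that of the Cauchy-type determinant, and that the symmetric-function content is right — is where all the real work lies; everything else (the eigenvalue computation, the bookkeeping of $m_0(\lambda)$, the appeal to \eref{hl-cauch2}) is straightforward. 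An alternative that sidesteps the operator entirely is to mimic the Theorem~\ref{thm1} proof directly: expand the right side of \eref{knw-id} via Cauchy--Binet into $\sum_\lambda(\cdots)$, and then match coefficients against the known Hall--Littlewood Cauchy expansion using the Weyl-type and Kostka-type formulas relating $s_\lambda$ and $P_\lambda$; but identifying the factor $\prod_{i=1}^n(1-t^{\lambda_i-i+n+1})\prod_{i,j}(1-tx_iy_j)$ with $\prod_{i\geq 0}\prod_j(1-t^j)$ times Hall--Littlewood structure constants requires a nontrivial symmetric-function identity, so the difference-operator route is likely cleaner. I would present the difference-operator proof as the main argument, and the point where I expect to spend the most effort is the closed-form action of $D_n(-t;0,t)$ on the product.
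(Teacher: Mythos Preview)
Your proposal is correct and follows essentially the same approach as the paper: apply $D_n(-t;0,t)$ to both sides of the $m=n$ Hall--Littlewood Cauchy identity, use the eigenvalue equation on the left to produce the extra factor $\prod_{j=1}^{m_0(\lambda)}(1-t^j)$, and then verify by direct computation that the operator applied to the product kernel yields the determinant. The paper organizes the right-hand side computation in reverse --- starting from the determinant, splitting each entry as $\frac{1}{1-x_iy_j}-\frac{t}{1-tx_iy_j}$, expanding by multilinearity into a sum over subsets $S\subseteq[n]$, and applying the Cauchy determinant \eref{cauch-det} to each term to recognize the result as $D_n(-t;0,t)$ acting on the kernel --- which is exactly the ``finite rational-function manipulation'' you flagged as the main obstacle.
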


\begin{proof}
We give the same proof as \cite{kn,war}. Acting on the $m=n$ Cauchy identity \eref{hl-cauch1} with the generating series \eref{do-gs}, evaluated at $z=-t$ and $q=0$, one obtains
\begin{align*}
D_n(-t;0,t)
\prod_{i,j=1}^n 
\frac{1-tx_i y_j}{1-x_i y_j}
&=
\sum_{\lambda}
b_{\lambda}(t)
\prod_{i=1}^{n}
(1-\delta_{\lambda_i,0} t^{n-i+1})
P_{\lambda}(x_1,\dots,x_n;t)
P_{\lambda}(y_1,\dots,y_n;t)
\\
&=
\sum_{\lambda}
\prod_{i=0}^{\infty}
\prod_{j=1}^{m_i(\lambda)}
(1-t^j)
P_{\lambda}(x_1,\dots,x_n;t)
P_{\lambda}(y_1,\dots,y_n;t).
\end{align*}
It remains to show that the difference operators act on the Hall--Littlewood Cauchy kernel to produce the determinant on the right hand side of \eref{knw-id}. To achieve that, we start from the determinant itself:
\begin{align*}
\det\left[
\frac{(1-t)}{(1-x_i y_j)(1-t x_i y_j)}
\right]_{1\leq i,j \leq n}
&=
\det\left[
\frac{1}{1-x_i y_j}
-
\frac{t}{1-t x_i y_j}
\right]_{1\leq i,j \leq n}
\\
&
=
\sum_{r=0}^{n} 
\sum_{\substack{
S \subseteq [n] \\ |S| = r
}}
(-t)^r 
\det\left[
\frac{1}{1-x_i^S y_j} 
\right]_{1\leq i,j \leq n}
\nonumber
\end{align*}
where $x_i^S = t x_i$ if $i \in S$, and $x_i^S = x_i$ if $i \not\in S$. Replacing each determinant in this sum with its factorized form \eref{cauch-det}, we find that
\begin{align*}
\det\left[
\frac{(1-t)}{(1-x_i y_j)(1-t x_i y_j)}
\right]_{1\leq i,j \leq n}
=
\sum_{r=0}^n
\sum_{\substack{S \subseteq [n] \\ |S| = r}}
(-t)^r
\frac{\prod_{1 \leq i < j \leq n} (x_i^S - x_j^S) (y_i - y_j)}
{\prod_{i,j=1}^n (1-x_i^S y_j)}
\end{align*}
or more explicitly, using the definition of the symbol $x_i^S$,
\begin{align*}
\det\left[
\frac{(1-t)}{(1-x_i y_j)(1-t x_i y_j)}
\right]_{1\leq i,j \leq n}
=
\prod_{1 \leq i < j \leq n} (y_i - y_j)
\sum_{r=0}^n
\sum_{\substack{S \subseteq [n] \\ |S| = r}}
(-1)^{\mathcal{I}(S)} (-1)^r t^{r(r+1)/2}
\times
\\
\prod_{\substack{i<j \\ i,j \in S}}
(x_i - x_j)
\prod_{\substack{i<j \\ i,j \not\in S}}
(x_i - x_j)
\prod_{\substack{i \in S \\ j \not\in S}}
(t x_i - x_j)
\prod_{j=1}^n
\prod_{i \in S}
\frac{1}{1-tx_i y_j}
\prod_{j=1}^n
\prod_{i \not\in S}
\frac{1}{1-x_i y_j}
\nonumber
\end{align*}
where $\mathcal{I}(S) = \{\# (i,j) | i > j, i \in S, j \not\in S \}$. Restoring the prefactors, we obtain
\begin{multline*}
\frac{\prod_{i,j=1}^n (1-tx_i y_j)}{\Delta(x)_n \Delta(y)_n}
\det\left[
\frac{(1-t)}{(1-x_i y_j)(1-t x_i y_j)}
\right]_{1\leq i,j \leq n}
=
\\
\left(
\sum_{r=0}^n
\sum_{\substack{S \subseteq [n] \\ |S| = r}}
(-1)^r t^{r(r+1)/2}
\prod_{\substack{i \in S \\ j \not\in S}}
\frac{t x_i - x_j}{x_i - x_j}
\prod_{j=1}^n
\prod_{i \in S}
\frac{1-x_i y_j}{1-tx_i y_j}
\right)
\prod_{i,j=1}^n 
\frac{1-tx_i y_j}{1-x_i y_j}.
\end{multline*}
Finally, referring to the definition \eref{do}, \eref{do-gs} of $D_n(-t;0,t)$, it is clear that
\begin{align*}
\frac{\prod_{i,j=1}^n (1-tx_i y_j)}{\Delta(x)_n \Delta(y)_n}
\det\left[
\frac{(1-t)}{(1-x_i y_j)(1-t x_i y_j)}
\right]_{1\leq i,j \leq n}
=
D_n(-t;0,t)
\prod_{i,j=1}^n 
\frac{1-tx_i y_j}{1-x_i y_j}.
\end{align*}

\end{proof}
 
\subsection{Refined plane partitions}
\label{sec:refined-pp}

We now study the combinatorial meaning of equations \eref{s-cauchy-refine} and 
\eref{knw-id}. The first of these, \eref{s-cauchy-refine}, can be interpreted as a 
$t$-refinement of the generating series \eref{s-pp-gs} for plane partitions. This refinement assigns an extra $t$-dependent weight to the central slice of a plane partition:
\begin{multline*}
\sum_{\pi \in \bm{\pi}_{n,n}}
\prod_{i=1}^{n} (1-t^{\pi(i,i)-i+n+1})
\prod_{i=1}^{n} 
x_i^{|\lambda^{(i)}|-|\lambda^{(i-1)}|}
y_i^{|\mu^{(i)}|-|\mu^{(i-1)}|}
=
\\
\frac{1}{\Delta(x)_n \Delta(y)_n}
\det\left[
\frac{(1-t)}{(1-x_i y_j)(1-t x_i y_j)}
\right]_{1\leq i,j \leq n}.
\end{multline*}
It is a refinement in the true sense, since by setting $t=0$ one recovers the standard result \eref{s-pp-gs} at $m=n$. 

Turning to the second equation, \eref{knw-id}, we find that it has an even nicer combinatorial interpretation as a modification of the generating series \eref{hl-pp-gs}. To explain this modification, we need the following definition.

\begin{defn}[Paths at height zero]
\label{defn:height0}
Let $\pi$ be a plane partition whose base is contained within an $n \times n$ square, meaning that $\pi(i,j) > 0$ only for $i,j \leq n$. A path at height 0 and depth $d$ is the set of all elements $\pi(i,j)=0$ such that ${\rm max}(i,j)=n-d+1$. We let 
$\tilde{p}_d(\pi)_{n \times n}$ denote the number of paths in $\pi$ with height $h \geq 0$ and depth $d$.\footnote{
Notice that the depth of the paths at height 0 depends on the framing of the plane partition. For this reason $\tilde{p}_d(\pi)_{n \times n}$ depends explicitly on $n$.}
\end{defn}

We now assign a $t$-weighting to plane partitions in much the same way as before, with each path of depth $k$ receiving a weight of $1-t^k$. The sole difference is that height-0 entries are now treated on much the same footing as the rest. This is more clearly illustrated in Figures \ref{fig:t-refine} and \ref{fig:t-refine-3d}.

\begin{figure}
\begin{tabular}{ccc}
\begin{tikzpicture}[scale=0.5,rotate=45]
\draw[] (0,0) rectangle (1,1);
\draw[] (0,1) rectangle (1,2);
\draw[] (0,2) rectangle (1,3);
\draw[] (0,3) rectangle (1,4);
\draw[] (0,4) rectangle (1,5);
\draw[] (0,5) rectangle (1,6);
\draw[fill=pink] (0,6) rectangle (1,7);
\draw[] (1,0) rectangle (2,1);
\draw[] (1,1) rectangle (2,2);
\draw[] (1,2) rectangle (2,3);
\draw[] (1,3) rectangle (2,4);
\draw[fill=pink] (1,4) rectangle (2,5);
\draw[fill=pink] (1,5) rectangle (2,6);
\draw[fill=pink] (1,6) rectangle (2,7);
\draw[] (2,0) rectangle (3,1);
\draw[] (2,1) rectangle (3,2);
\draw[] (2,2) rectangle (3,3);
\draw[fill=pink] (2,3) rectangle (3,4);
\draw[fill=pink] (2,4) rectangle (3,5);
\draw[fill=pink] (2,5) rectangle (3,6);
\draw[fill=pink] (2,6) rectangle (3,7);
\draw[] (3,0) rectangle (4,1);
\draw[fill=pink] (3,1) rectangle (4,2);
\draw[fill=pink] (3,2) rectangle (4,3);
\draw[fill=pink] (3,3) rectangle (4,4);
\draw[fill=pink] (3,4) rectangle (4,5);
\draw[fill=pink] (3,5) rectangle (4,6);
\draw[fill=pink] (3,6) rectangle (4,7);
\draw[] (4,0) rectangle (5,1);
\draw[fill=pink] (4,1) rectangle (5,2);
\draw[fill=dred] (4,2) rectangle (5,3);
\draw[fill=dred] (4,3) rectangle (5,4);
\draw[fill=pink] (4,4) rectangle (5,5);
\draw[fill=pink] (4,5) rectangle (5,6);
\draw[fill=pink] (4,6) rectangle (5,7);
\draw[fill=pink] (5,0) rectangle (6,1);
\draw[fill=pink] (5,1) rectangle (6,2);
\draw[fill=dred] (5,2) rectangle (6,3);
\draw[fill=pink] (5,3) rectangle (6,4);
\draw[fill=pink] (5,4) rectangle (6,5);
\draw[fill=pink] (5,5) rectangle (6,6);
\draw[fill=pink] (5,6) rectangle (6,7);
\draw[fill=pink] (6,0) rectangle (7,1);
\draw[fill=pink] (6,1) rectangle (7,2);
\draw[fill=pink] (6,2) rectangle (7,3);
\draw[fill=pink] (6,3) rectangle (7,4);
\draw[fill=pink] (6,4) rectangle (7,5);
\draw[fill=dred] (6,5) rectangle (7,6);
\draw[fill=dred] (6,6) rectangle (7,7);


\node at (0.5,6.5) {2};

\node at (1.5,6.5) {2};
\node at (0.5,5.5) {0};

\node at (2.5,6.5) {4};
\node at (1.5,5.5) {2};
\node at (0.5,4.5) {0};

\node at (3.5,6.5) {5};
\node at (2.5,5.5) {3};
\node at (1.5,4.5) {1};
\node at (0.5,3.5) {0};

\node at (4.5,6.5) {5};
\node at (3.5,5.5) {3};
\node at (2.5,4.5) {1};
\node at (1.5,3.5) {0};
\node at (0.5,2.5) {0};

\node at (5.5,6.5) {5};
\node at (4.5,5.5) {4};
\node at (3.5,4.5) {2};
\node at (2.5,3.5) {1};
\node at (1.5,2.5) {0};
\node at (0.5,1.5) {0};

\node at (6.5,6.5) {\color{white} 5};
\node at (5.5,5.5) {5};
\node at (4.5,4.5) {3};
\node at (3.5,3.5) {1};
\node at (2.5,2.5) {0};
\node at (1.5,1.5) {0};
\node at (0.5,0.5) {0};

\node at (6.5,5.5) {\color{white} 5};
\node at (5.5,4.5) {5};
\node at (4.5,3.5) {\color{white} 1};
\node at (3.5,2.5) {1};
\node at (2.5,1.5) {0};
\node at (1.5,0.5) {0};

\node at (6.5,4.5) {5};
\node at (5.5,3.5) {2};
\node at (4.5,2.5) {\color{white} 1};
\node at (3.5,1.5) {1};
\node at (2.5,0.5) {0};

\node at (6.5,3.5) {3};
\node at (5.5,2.5) {\color{white} 1};
\node at (4.5,1.5) {1};
\node at (3.5,0.5) {0};

\node at (6.5,2.5) {2};
\node at (5.5,1.5) {1};
\node at (4.5,0.5) {0};

\node at (6.5,1.5) {2};
\node at (5.5,0.5) {1};

\node at (6.5,0.5) {1};
\end{tikzpicture}
& &
\begin{tikzpicture}[scale=0.5,rotate=45]
\draw[fill=pink] (0,0) rectangle (1,1);
\draw[fill=pink] (0,1) rectangle (1,2);
\draw[fill=pink] (0,2) rectangle (1,3);
\draw[fill=pink] (0,3) rectangle (1,4);
\draw[fill=pink] (0,4) rectangle (1,5);
\draw[fill=pink] (0,5) rectangle (1,6);
\draw[fill=pink] (0,6) rectangle (1,7);
\draw[fill=pink] (1,0) rectangle (2,1);
\draw[fill=dred] (1,1) rectangle (2,2);
\draw[fill=dred] (1,2) rectangle (2,3);
\draw[fill=dred] (1,3) rectangle (2,4);
\draw[fill=pink] (1,4) rectangle (2,5);
\draw[fill=pink] (1,5) rectangle (2,6);
\draw[fill=pink] (1,6) rectangle (2,7);
\draw[fill=pink] (2,0) rectangle (3,1);
\draw[fill=dred] (2,1) rectangle (3,2);
\draw[fill=burg] (2,2) rectangle (3,3);
\draw[fill=pink] (2,3) rectangle (3,4);
\draw[fill=pink] (2,4) rectangle (3,5);
\draw[fill=pink] (2,5) rectangle (3,6);
\draw[fill=pink] (2,6) rectangle (3,7);
\draw[fill=pink] (3,0) rectangle (4,1);
\draw[fill=pink] (3,1) rectangle (4,2);
\draw[fill=pink] (3,2) rectangle (4,3);
\draw[fill=pink] (3,3) rectangle (4,4);
\draw[fill=pink] (3,4) rectangle (4,5);
\draw[fill=pink] (3,5) rectangle (4,6);
\draw[fill=pink] (3,6) rectangle (4,7);
\draw[fill=pink] (4,0) rectangle (5,1);
\draw[fill=pink] (4,1) rectangle (5,2);
\draw[fill=dred] (4,2) rectangle (5,3);
\draw[fill=dred] (4,3) rectangle (5,4);
\draw[fill=pink] (4,4) rectangle (5,5);
\draw[fill=pink] (4,5) rectangle (5,6);
\draw[fill=pink] (4,6) rectangle (5,7);
\draw[fill=pink] (5,0) rectangle (6,1);
\draw[fill=pink] (5,1) rectangle (6,2);
\draw[fill=dred] (5,2) rectangle (6,3);
\draw[fill=pink] (5,3) rectangle (6,4);
\draw[fill=pink] (5,4) rectangle (6,5);
\draw[fill=pink] (5,5) rectangle (6,6);
\draw[fill=pink] (5,6) rectangle (6,7);
\draw[fill=pink] (6,0) rectangle (7,1);
\draw[fill=pink] (6,1) rectangle (7,2);
\draw[fill=pink] (6,2) rectangle (7,3);
\draw[fill=pink] (6,3) rectangle (7,4);
\draw[fill=pink] (6,4) rectangle (7,5);
\draw[fill=dred] (6,5) rectangle (7,6);
\draw[fill=dred] (6,6) rectangle (7,7);


\node at (0.5,6.5) {2};

\node at (1.5,6.5) {2};
\node at (0.5,5.5) {0};

\node at (2.5,6.5) {4};
\node at (1.5,5.5) {2};
\node at (0.5,4.5) {0};

\node at (3.5,6.5) {5};
\node at (2.5,5.5) {3};
\node at (1.5,4.5) {1};
\node at (0.5,3.5) {0};

\node at (4.5,6.5) {5};
\node at (3.5,5.5) {3};
\node at (2.5,4.5) {1};
\node at (1.5,3.5) {\color{white} 0};
\node at (0.5,2.5) {0};

\node at (5.5,6.5) {5};
\node at (4.5,5.5) {4};
\node at (3.5,4.5) {2};
\node at (2.5,3.5) {1};
\node at (1.5,2.5) {\color{white} 0};
\node at (0.5,1.5) {0};

\node at (6.5,6.5) {\color{white} 5};
\node at (5.5,5.5) {5};
\node at (4.5,4.5) {3};
\node at (3.5,3.5) {1};
\node at (2.5,2.5) {\color{white} 0};
\node at (1.5,1.5) {\color{white} 0};
\node at (0.5,0.5) {0};

\node at (6.5,5.5) {\color{white} 5};
\node at (5.5,4.5) {5};
\node at (4.5,3.5) {\color{white} 1};
\node at (3.5,2.5) {1};
\node at (2.5,1.5) {\color{white} 0};
\node at (1.5,0.5) {0};

\node at (6.5,4.5) {5};
\node at (5.5,3.5) {2};
\node at (4.5,2.5) {\color{white} 1};
\node at (3.5,1.5) {1};
\node at (2.5,0.5) {0};

\node at (6.5,3.5) {3};
\node at (5.5,2.5) {\color{white} 1};
\node at (4.5,1.5) {1};
\node at (3.5,0.5) {0};

\node at (6.5,2.5) {2};
\node at (5.5,1.5) {1};
\node at (4.5,0.5) {0};

\node at (6.5,1.5) {2};
\node at (5.5,0.5) {1};

\node at (6.5,0.5) {1};
\end{tikzpicture}
\end{tabular}
\caption{On the left, a plane partition which receives a $t$-weighting based on the paths within the non-zero entries. On the right, the same plane partition, which receives an additional $t$-weighting based on the paths among the zero entries.}
\label{fig:t-refine}
\end{figure}

\begin{figure}
\begin{tabular}{ccc}
\includegraphics[scale=0.4]{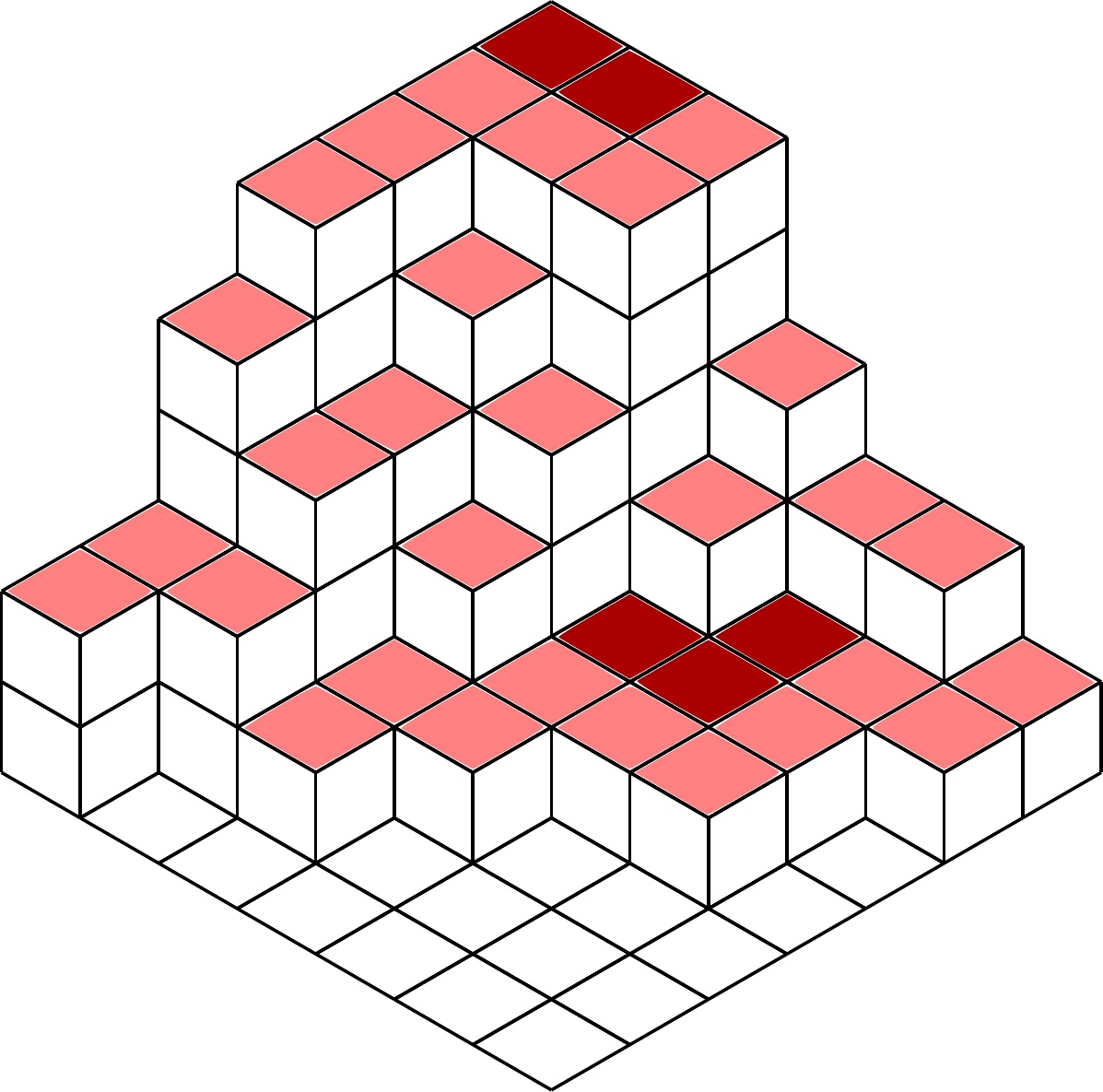}
& &
\includegraphics[scale=0.4]{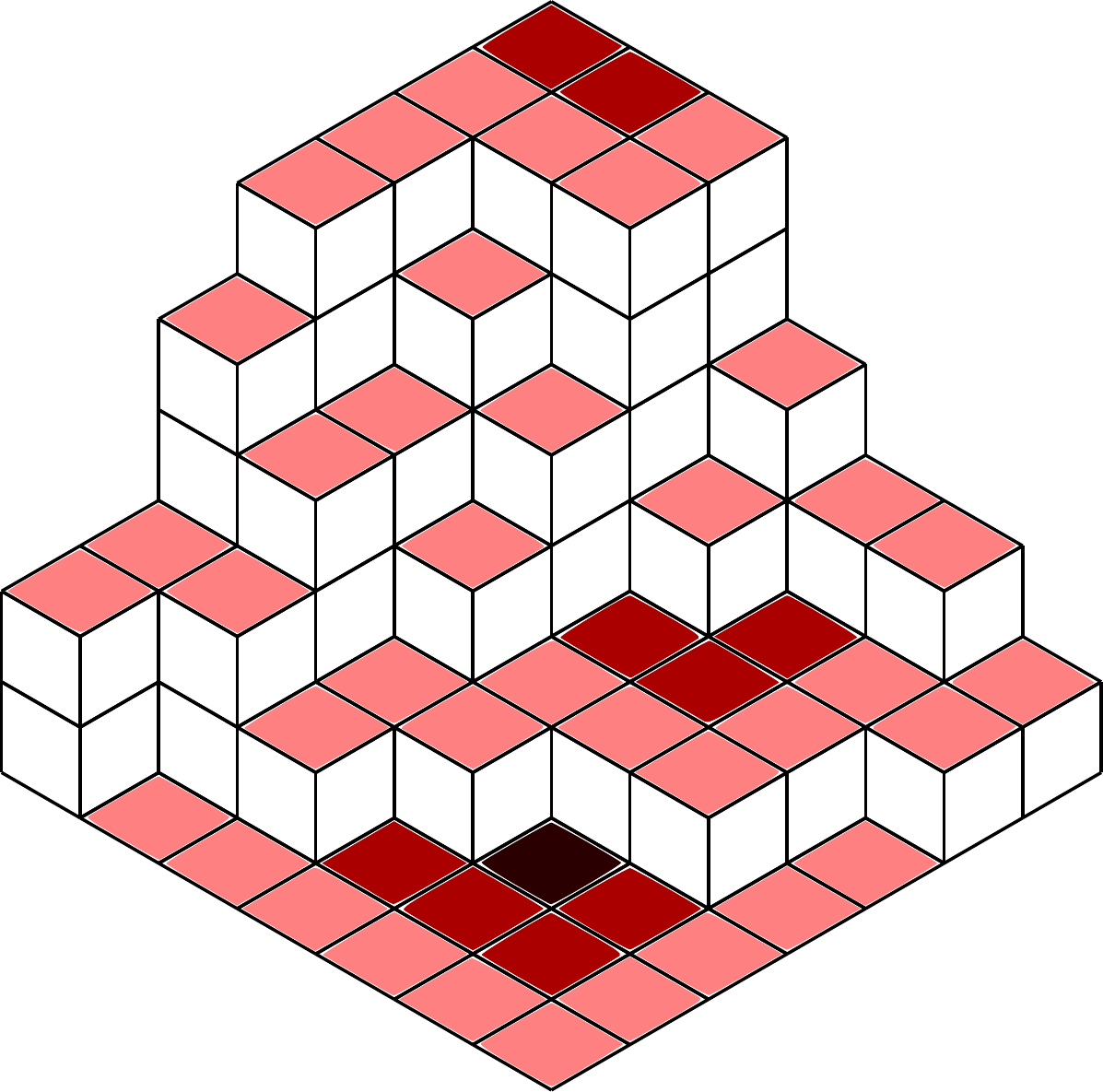}
\end{tabular}
\caption{The three-dimensional version of Figure \ref{fig:t-refine}. On the left, a path-weighted plane partition. On the right, the additional weighting which it receives due to the modified Cauchy identity \eref{knw-id}.}
\label{fig:t-refine-3d}
\end{figure}

One can easily show that the left hand side of \eref{knw-id} can be interpreted as a generating series of plane partitions with precisely the additional $t$-weighting described above. We find that
\begin{multline}
\label{pp-ASM-gs}
\sum_{\pi \in \bm{\pi}_{n,n}}
\prod_{i \geq 1} \left(1-t^i\right)^{\tilde{p}_i(\pi)_{n\times n}}
\prod_{i=1}^{n} x_i^{|\lambda^{(i)}|-|\lambda^{(i-1)}|} y_i^{|\mu^{(i)}|-|\mu^{(i-1)}|}
=
\\
\frac{\prod_{i,j=1}^{n} (1-t x_i y_j)}
{\Delta(x)_n \Delta(y)_n}
\det
\left[
\frac{(1-t)}{(1-x_i y_j)(1-t x_i y_j)}
\right]_{1\leq i,j \leq n}.
\end{multline}
Notice that this modification is not, strictly speaking, a refinement of the original generating series \eref{hl-pp-gs}, since it cannot be recovered as a special case.

\subsection{Six-vertex model with domain wall boundary conditions}
\label{sec:6v}

We now turn to the six-vertex model of statistical mechanics.\footnote{It is not our intention to describe this model in great detail, but rather to present the bare facts which are relevant to this work. For more information about the model and its solution, we refer the reader to \cite{bax}.} This is a model on the square lattice, with vertices formed by the intersection of lines. To each of the four edges surrounding an intersection, one assigns arrows which can point either towards the vertex center or away from it. The arrow configurations are constrained by the fact that each vertex must have two arrows which point towards its center, and two which point away. This gives rise to six possible vertices, as shown in Figure \ref{6vertices}. 
\begin{figure}[H]
\begin{tabular}{ccc}
\begin{tikzpicture}[scale=0.6,>=stealth]
\draw[thick, smooth] (-1,0) -- (1,0);
\node at (-0.5,0) {$\r$}; \node at (0.5,0) {$\r$};
\node[label={left: \fs ${\color{red} \shortrightarrow} \ x$}] at (-1,0) {};
\draw[thick, smooth] (0,-1) -- (0,1);
\node at (0,-0.5) {$\u$}; \node at (0,0.5) {$\u$};
\node[label={below: \fs ${\color{red} \begin{array}{c} {\color{black} y} \\ \shortuparrow \end{array} }$}] at (0,-1) {};
\end{tikzpicture}
\quad\quad\quad&
\begin{tikzpicture}[scale=0.6,>=stealth]
\draw[thick, smooth] (-1,0) -- (1,0);
\node at (-0.5,0) {$\r$}; \node at (0.5,0) {$\r$};
\node[label={left: \fs ${\color{red} \shortrightarrow} \ x$}] at (-1,0) {};
\draw[thick, smooth] (0,-1) -- (0,1);
\node at (0,-0.5) {$\d$}; \node at (0,0.5) {$\d$};
\node[label={below: \fs ${\color{red} \begin{array}{c} {\color{black} y} \\ \shortuparrow \end{array} }$}] at (0,-1) {};
\end{tikzpicture}
\quad\quad\quad&
\begin{tikzpicture}[scale=0.6,>=stealth]
\draw[thick, smooth] (-1,0) -- (1,0);
\node at (-0.5,0) {$\r$}; \node at (0.5,0) {$\l$};
\node[label={left: \fs ${\color{red} \shortrightarrow} \ x$}] at (-1,0) {};
\draw[thick, smooth] (0,-1) -- (0,1);
\node at (0,-0.5) {$\d$}; \node at (0,0.5) {$\u$};
\node[label={below: \fs ${\color{red} \begin{array}{c} {\color{black} y} \\ \shortuparrow \end{array} }$}] at (0,-1) {};
\end{tikzpicture}
\\
\quad
$a_{+}(x,y)$
\quad\quad\quad&
\quad
$b_{+}(x,y)$
\quad\quad\quad&
\quad
$c_{+}(x,y)$
\\
\\
\begin{tikzpicture}[scale=0.6,>=stealth]
\draw[thick, smooth] (-1,0) -- (1,0);
\node at (-0.5,0) {$\l$}; \node at (0.5,0) {$\l$};
\node[label={left: \fs ${\color{red} \shortrightarrow} \ x$}] at (-1,0) {};
\draw[thick, smooth] (0,-1) -- (0,1);
\node at (0,-0.5) {$\d$}; \node at (0,0.5) {$\d$};
\node[label={below: \fs ${\color{red} \begin{array}{c} {\color{black} y} \\ \shortuparrow \end{array} }$}] at (0,-1) {};
\end{tikzpicture}
\quad\quad\quad&
\begin{tikzpicture}[scale=0.6,>=stealth]
\draw[thick, smooth] (-1,0) -- (1,0);
\node at (-0.5,0) {$\l$}; \node at (0.5,0) {$\l$};
\node[label={left: \fs ${\color{red} \shortrightarrow} \ x$}] at (-1,0) {};
\draw[thick, smooth] (0,-1) -- (0,1);
\node at (0,-0.5) {$\u$}; \node at (0,0.5) {$\u$};
\node[label={below: \fs ${\color{red} \begin{array}{c} {\color{black} y} \\ \shortuparrow \end{array} }$}] at (0,-1) {};
\end{tikzpicture}
\quad\quad\quad&
\begin{tikzpicture}[scale=0.6,>=stealth]
\draw[thick, smooth] (-1,0) -- (1,0);
\node at (-0.5,0) {$\l$}; \node at (0.5,0) {$\r$};
\node[label={left: \fs ${\color{red} \shortrightarrow} \ x$}] at (-1,0) {};
\draw[thick, smooth] (0,-1) -- (0,1);
\node at (0,-0.5) {$\u$}; \node at (0,0.5) {$\d$};
\node[label={below: \fs ${\color{red} \begin{array}{c} {\color{black} y} \\ \shortuparrow \end{array} }$}] at (0,-1) {};
\end{tikzpicture}
\\
\quad
$a_{-}(x,y)$
\quad\quad\quad &
\quad
$b_{-}(x,y)$
\quad\quad\quad &
\quad
$c_{-}(x,y)$
\end{tabular}
\caption{The vertices of the six-vertex model, with Boltzmann weights indicated beneath. The small red arrows indicate the orientation of the lines. In order to distinguish between $a$ and $b$ type vertices, the correct convention is to view every vertex such that its lines are oriented from south-west to north-east.}
\label{6vertices}
\end{figure}
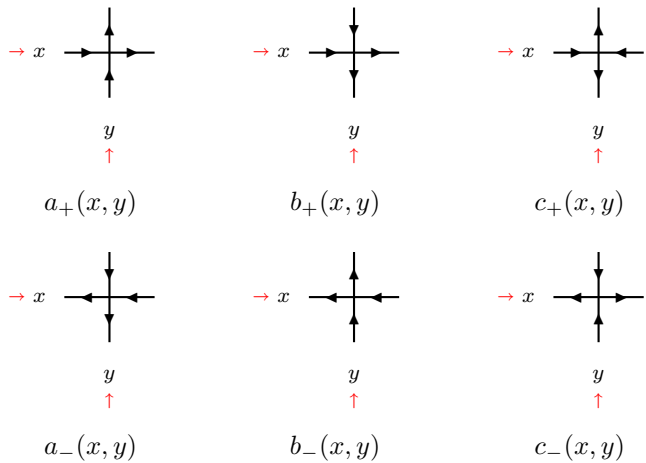
To each horizontal (respectively, vertical) line of the lattice one associates an orientation and a variable $x_i$ (respectively, $y_j$), which is its {\it rapidity}. The six types of vertex are assigned Boltzmann weights, which in this work are rational functions depending on the ratio $x/y$ of the rapidities incident on the vertex:
\begin{align}
\label{boltz}
\begin{array}{ll}
\displaystyle{
a_{+}(x,y)
=
\frac{1-t x/y}{1 - x/y},
}
&
\quad\quad
\displaystyle{
a_{-}(x,y)
=
\frac{1-t x/y}{1 - x/y},
}
\\
\displaystyle{ 
b_{+}(x,y) = 1,
}
&
\quad\quad
\displaystyle{ 
b_{-}(x,y) = t,
}
\phantom{\displaystyle{\frac{1-t x/y}{1 - x/y}}}
\\
\displaystyle{ 
c_{+}(x,y) 
= 
\frac{(1-t)}{1 - x/y},
}
&
\quad\quad
\displaystyle{ 
c_{-}(x,y) 
= 
\frac{(1-t) x/y}{1 - x/y}.
}
\end{array}
\end{align}
We point out that we have chosen a normalization in which the $b_{\pm}$ Boltzmann weights do not depend on the rapidity variables of the vertex. The essential feature of the weights thus constructed is that they satisfy the Yang--Baxter equation, which makes the six-vertex model integrable. In graphical form, the Yang--Baxter equation may be realised as in Figure \ref{fig:YB}.

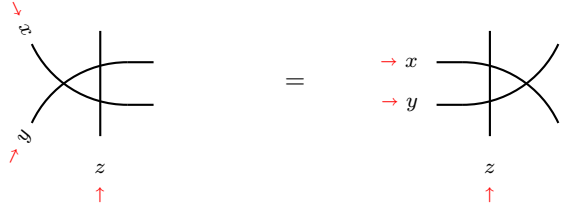
\begin{figure}[H]
\begin{tikzpicture}[scale=0.7]
\draw[thick,smooth] (0,1.5) arc (-155:-90:2);
\draw[thick,smooth] (0,0) arc (155:90:2);
\draw[thick, smooth] ({2*cos(25)},{1.5-(2-2*sin(25))})--({2*cos(25)+0.5},{1.5-(2-2*sin(25))});
\draw[thick, smooth] ({2*cos(25)},{(2-2*sin(25))})--({2*cos(25)+0.5},{(2-2*sin(25))});
\draw[thick,smooth] (1.3,-0.25)--(1.3,1.75);
\node at (5,0.75) {$=$};
\draw[thick,smooth] (10,1.5) arc (-25:-90:2);
\draw[thick,smooth] (10,0) arc (25:90:2);
\draw[thick, smooth] ({10-2*cos(25)},{1.5-(2-2*sin(25))})--({10-2*cos(25)-0.5},{1.5-(2-2*sin(25))});
\draw[thick, smooth] ({10-2*cos(25)},{(2-2*sin(25))})--({10-2*cos(25)-0.5},{(2-2*sin(25))});
\draw[thick,smooth] (8.7,-0.25)--(8.7,1.75);

\node[left,rotate=295] at (0,1.5) {\fs ${\color{red} \shortrightarrow} \ x$};
\node[left,rotate=-295] at (0,0) {\fs ${\color{red} \shortrightarrow} \ y$};

\node[label={below: \fs ${\color{red} \begin{array}{c}  {\color{black} z} \\ \shortuparrow \end{array} }$}] at (1.3,-0.25) {};

\node[label={left: \fs ${\color{red} \shortrightarrow} \ y$}] at ({10-2*cos(25)-0.5},{1.5-(2-2*sin(25))}) {};
\node[label={left: \fs ${\color{red} \shortrightarrow} \ x$}] at ({10-2*cos(25)-0.5},{(2-2*sin(25))}) {};
\node[label={below: \fs ${\color{red} \begin{array}{c} {\color{black} z} \\ \shortuparrow \end{array} }$}] at (8.7,-0.25) {};
\end{tikzpicture}
\caption{The Yang--Baxter equation. One makes a definite choice for the arrows on the six external edges (which is consistent and fixed on both sides of the equation) and sums over the possible arrow configurations on the three internal edges. In this way, the figure actually implies $2^6$ equations involving the Boltzmann weights \eref{boltz}.}
\label{fig:YB} 
\end{figure}
Most pertinent to this work, the Yang--Baxter equation implies that the partition functions that we study are symmetric functions in their rapidity variables, since it allows us to freely exchange a pair of horizontal or vertical lattice lines. 

One of the most fundamental quantities within the six-vertex model is the domain wall partition function (here and after abbreviated by DWPF). It was originally introduced by Korepin in \cite{kor}, and has played a key role in the study of scalar products and correlation functions in integrable quantum spin-chains (see \eg\ \cite{kbi}). The DWPF is most easily defined graphically, as in Figure \ref{DWPF}. 
\begin{figure}[H]
\begin{tikzpicture}[scale=0.6,>=stealth]
\foreach\x in {1,...,6}{
\draw[thick,smooth]
(0,\x) -- (7,\x);
\node at (0.5, \x) {\r};
\node at (6.5, \x) {\l};
}
\foreach\x in {1,...,6}
\node[label={left: \fs ${\color{red} \shortrightarrow} \ x_{\x}$}] at (0,7-\x) {};
\foreach\x in {1,...,6}{
\draw[thick,smooth]
(\x,0) -- (\x,7);
\node at (\x, 0.5) {\d};
\node at (\x,6.5) {\u};
}
\foreach\x in {1,...,6}
\node[label={below: \fs ${\color{red} \begin{array}{c} {\color{black} \b{y}_{\x}} \\ \shortuparrow \end{array} }$}] at (7-\x,0) {};
\end{tikzpicture}
\caption{$Z_{\rm ASM}$, the domain wall partition function in the case $n=6$. All external horizontal arrows point inwards, while external vertical arrows point outwards. We sum over all allowed configurations of the internal edges. For convenience, we have reciprocated the variables on the vertical lines, and set them to $\b{y}_j = 1/y_j$.}
\label{DWPF} 
\end{figure}
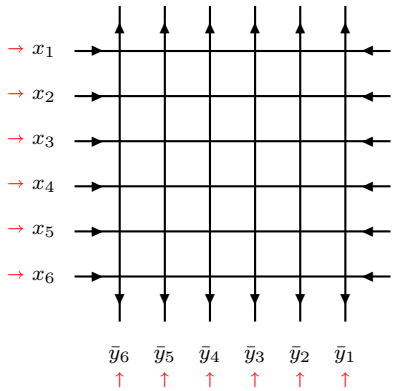
The DWPF was evaluated in determinant form by Izergin in \cite{ize}. With the Boltzmann weights as defined in \eref{boltz}, the determinant expression is
\begin{align}
\label{ize-det}
Z_{\rm ASM}(x_1,\dots,x_n ; y_1,\dots, y_n ; t)
=
\frac{\prod_{i,j=1}^{n} (1-t x_i y_j)}
{\prod_{1 \leq i<j \leq n} (x_i - x_j) (y_i - y_j)}
\det
\left[
\frac{(1-t)}{(1-x_i y_j)(1-t x_i y_j)}
\right]_{1\leq i,j \leq n}.
\end{align}
We will not elaborate on the proof of \eref{ize-det}, but it can be proved using the well known Izergin--Korepin technique, whereby one writes down a set of conditions which uniquely determine $Z_{\rm ASM}$ and shows that \eref{ize-det} obeys all such conditions.

As was noticed in \cite{kup1}, configurations of the six-vertex model with domain wall boundary conditions are in one-to-one correspondence with alternating sign matrices (ASMs). Using this fact, and evaluating the determinant expression \eref{ize-det} in a certain homogeneous limit, Kuperberg was able to obtain an elegant proof of the ASM conjecture \cite{mrr}. In this way, one can think of \eref{ize-det} as being a multivariate generating function of ASMs, and it is for that reason that we use the notation $Z_{\rm ASM}$.

Finally, we point out that the DWPF \eref{ize-det} is identically equal to the right hand side of equation \eref{knw-id} (this fact had already been observed by Warnaar in \cite{war}). Our observation is that 
\eref{pp-ASM-gs} relates a generating series of path-weighted plane partitions with a generating series of ASMs, providing a new and potentially interesting link between these combinatorial objects. 

\subsection{Partition functions on rectangular domains}
\label{ssec:p-dwpf}

Up until now, we restricted our attention to the case where the cardinalities of the sets 
$\{x_1,\dots,x_n\}$ and $\{y_1,\dots,y_n\}$ in \eref{knw-id} are equal. Clearly we are at liberty to take a subset of either of these to be zero, choosing for example $x_{m+1} = \cdots = x_n = 0$, for some $m<n$. Due to the stability property $P_{\lambda}(x_1,\dots,x_{n-1},0;t) = P_{\lambda}(x_1,\dots,x_{n-1};t)$ of 
Hall--Littlewood polynomials, taking this limit transforms the left hand side of \eref{knw-id} to the case of unequal cardinalities. To calculate the resulting right hand side, we observe that the Izergin determinant may be written in the following form:
\begin{align}
\label{row-op-det}
\det\left[
\frac{(1-t)}{(1-x_i y_j)(1-tx_i y_j)}
\right]_{1\leq i,j \leq n}
=
\prod_{m+1 \leq i<j \leq n} (x_i - x_j)
\det\Big[
\mathcal{A}_{i,j}
\Big]_{1\leq i,j \leq n},
\end{align}
where the entries of the final determinant are given by
\begin{align*}
\mathcal{A}_{i,j}
=
\left\{
\begin{array}{ll}
\mathcal{A}_{i,j}(x_i;y_j)
=
\displaystyle{
\frac{(1-t)}{(1-x_i y_j)(1-t x_i y_j)}
},
\quad
&  
1 \leq i \leq m,
\\ \\ 
\mathcal{A}_{i,j}(x_i,\dots,x_n;y_j)
=
\displaystyle{\sum_{k=0}^{\infty}}
(1-t^{k+1})h_{k+i-n}(x_i,\dots,x_n) y_j^k,
\quad
& 
m+1 \leq i \leq n,
\end{array}
\right.
\end{align*}
with $h_{k+i-n}(x_i,\dots,x_n)$ denoting a complete symmetric function (see Section 2, Chapter I of \cite{mac}). The derivation of \eref{row-op-det} is by expanding the entries of the original determinant as formal power series, and using row operations in conjunction with the identity
\begin{align*} 
h_k(x_1,\dots,x_l,x_m) 
-
h_k(x_1,\dots,x_l,x_n)
=
(x_m - x_n)
h_{k-1}(x_1,\dots,x_l,x_m,x_n)
\end{align*}
for complete symmetric functions. Writing the determinant in this way, $x_i \rightarrow 0$ for all 
$m+1 \leq i \leq n$ is no longer a singular limit of equation \eref{knw-id}. We thus obtain
\begin{multline}
\label{knw-pdwpf}
\sum_{\lambda}
\prod_{j=1}^{n-\ell(\lambda)}
(1-t^j)
b_{\lambda}(t)
P_{\lambda}(x_1,\dots,x_m;t)
P_{\lambda}(y_1,\dots,y_n;t)
=
\\
\frac{\prod_{i=1}^{n-m} (1-t^i) \prod_{i=1}^{m} \prod_{j=1}^{n} (1-tx_i y_j)}
{\prod_{i=1}^{m} (x_i^{n-m}) \Delta(x)_m \Delta(y)_n}
\det\Big[
\mathcal{A}^{\circ}_{i,j}
\Big]_{1 \leq i,j \leq n},
\end{multline}
where the entries of the determinant are given by
\begin{align*}
\mathcal{A}^{\circ}_{i,j}
=
\frac{(1-t)}{(1-x_i y_j)(1-t x_i y_j)},
\ \  
1 \leq i \leq m,
\quad\quad\quad\quad
\mathcal{A}^{\circ}_{i,j}
=
y_j^{n-i},
\ \ 
m+1 \leq i \leq n.
\end{align*}
It is natural to ask whether the right hand side of the new identity \eref{knw-pdwpf} has an interpretation within the six-vertex model. To answer this question, we consider the form of the Boltzmann weights \eref{boltz} at $x=0$. They simplify to
\begin{align} 
\label{triv-boltz}
a_{\pm} = 1,\quad b_{+} = 1,\quad b_{-} = t,\quad c_{+} = 1-t,\quad c_{-} = 0.
\end{align}
The vanishing of the $c_{-}$ vertex plays a crucial role in studying the DWPF in this limit. It means that when we set $x_{m+1} = \cdots = x_n = 0$, no $c_{-}$ vertices are allowed in the bottom $n-m$ rows of Figure \ref{DWPF}, and that these rows contribute only a multiplicative factor to the partition function. The bottom $n-m$ rows can effectively be deleted from the lattice, at the expense of the factor 
$\prod_{i=1}^{n-m} (1-t^i)$, while the lower external edges of the resulting lattice are summed over all possible arrow configurations. For more details on this limiting procedure, and in particular for the derivation of the overall multiplicative factor, we refer the reader to \cite{fw2}. The resulting lattice expression was called a {\it partial} domain wall partition function (pDWPF) in \cite{fw2}, and is illustrated in Figure \ref{p-DWPF}. We denote the pDWPF by $Z_{\rm ASM}(x_1,\dots,x_m;y_1,\dots,y_n;t)$.

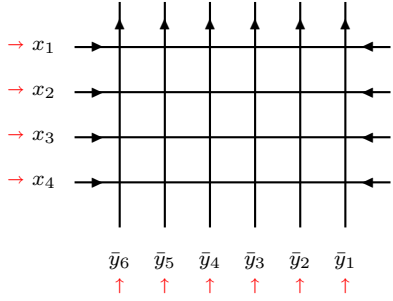
\begin{figure}[H]
\begin{tikzpicture}[scale=0.6,>=stealth]
\foreach\x in {1,...,4}{
\draw[thick, smooth] (0,\x) -- (7,\x);
\node at (0.5, \x) {\r};
\node at (6.5, \x) {\l};
}
\foreach\x in {1,...,4}
\node[label={left: \fs ${\color{red} \shortrightarrow} \ x_{\x}$}] at (0,5-\x) {};
\foreach\x in {1,...,6}{
\draw[thick, smooth] (\x,0) -- (\x,5);
\node at (\x, 4.5) {\u};
}
\foreach\x in {1,...,6}
\node[label={below: \fs ${\color{red} \begin{array}{c} {\color{black} \b{y}_{\x}} \\ \shortuparrow \end{array}}$}] at (7-\x,0) {};
\end{tikzpicture}
\caption{The partial domain wall partition function 
$Z_{\rm ASM}(x_1,\dots,x_m;y_1,\dots,y_n;t)$, which arises from setting $x_{m+1} = \cdots = x_n = 0$ in Figure \ref{DWPF}. In the example shown, $m=4$, $n=6$. The lower external edges of the lattice are to be summed over all possible arrow configurations, which is why we leave them blank. In this example, the sum is over the 
$\binom{6}{2}$ ways of assigning 2 upward and 4 downward arrows to the 6 edges.}
\label{p-DWPF} 
\end{figure}
Cancelling the factor $\prod_{i=1}^{n-m} (1-t^i)$ from the right hand side of \eref{knw-pdwpf}, it is equal to the pDWPF: 
\begin{multline*}
\sum_{\lambda}
\prod_{j=1}^{m-\ell(\lambda)}
(1-t^{j+n-m})
b_{\lambda}(t)
P_{\lambda}(x_1,\dots,x_m;t)
P_{\lambda}(y_1,\dots,y_n;t)
\\
=
\frac{\prod_{i=1}^{m} \prod_{j=1}^{n} (1-tx_i y_j)}
{\prod_{i=1}^{m} (x_i^{n-m}) \Delta(x)_m \Delta(y)_n}
\det\Big[
\mathcal{A}^{\circ}_{i,j}
\Big]_{1 \leq i,j \leq n}
=
Z_{\rm ASM}(x_1,\dots,x_m; y_1,\dots,y_n; t),
\end{multline*}
where we have used the implicit constraint $\ell(\lambda) \leq m$ on the summation over $\lambda$ to remove the common factor $\prod_{i=1}^{n-m} (1-t^i)$ from both sides of \eref{knw-pdwpf}.


\section{Refined symplectic Cauchy identities and U-turn alternating sign matrices}
\label{sec:UASM}

\subsection{A further Cauchy determinant} Let $x_1,\dots,x_n$ and $y_1,\dots,y_n$ be indeterminates. The following determinantal formula is also due to Cauchy:
\begin{align}
\det\left[ \frac{1}{(1-x_i y_j)(1-x_i \b{y}_j)} \right]_{1\leq i,j \leq n}
=
\frac{
\Delta(x)_n \Delta(y)_n 
\prod_{1 \leq i<j \leq n} 
(1-x_i x_j)(1-\b{y}_i \b{y}_j)
}
{\prod_{i,j=1}^n (1-x_i y_j) (1-x_i \b{y}_j)}.
\label{further-cauch-det}
\end{align}
One can prove this by factor exhaustion.

\subsection{Weyl formula for symplectic characters}

Symplectic characters $sp_{\lambda}(x_1,\b{x}_1,\dots,x_n,\b{x}_n)$ are the characters of irreducible representations of $Sp(2n)$. The Weyl character formula gives rise to the determinant expression:
\begin{align*}
sp_{\lambda}(x_1,\b{x}_1,\dots,x_n,\b{x}_n)
=
\frac{
\det\left[ x_i^{\lambda_j-j+n+1} - \b{x}_i^{\lambda_j-j+n+1} \right]_{1 \leq i,j \leq n}
}
{
\prod_{i=1}^{n} (x_i - \b{x}_i)
\prod_{1 \leq i<j \leq n} (x_i - x_j) (1 - \b{x}_i \b{x}_j)
}.
\end{align*}

\subsection{Tableau formula for symplectic characters}

In direct analogy with Schur polynomials, a symplectic character can be defined as a sum over tableaux which obey certain properties \cite{sun,kes}. A {\it symplectic tableau} of shape $\lambda$ is an assignment of one symbol $\{1,\b{1},\dots,n,\b{n}\}$ to each box of the Young diagram $\lambda$, subject to the following rules:
\begin{enumerate}[{\bf 1.}]
\item The symbols have the ordering $1 < \b{1} < \cdots < n < \b{n}$.
\item The entries in $\lambda$ increase weakly along each row and strictly down each column.
\item All entries in row $i$ of $\lambda$ are at least $i$. 
\end{enumerate}
Notice that conditions {\bf 1} and {\bf 2} are the usual rules of a semi-standard Young tableau (albeit in terms of a different alphabet of symbols). It is condition {\bf 3} which 
non-trivially distinguishes a symplectic tableau from an ordinary semi-standard tableau.

With this definition, $sp_{\lambda}(x_1,\b{x}_1,\dots,x_n,\b{x}_n)$ can be alternatively defined as a weighted sum over symplectic tableaux $\overline{T}$ of shape $\lambda$:
\begin{align}
sp_{\lambda}(x_1,\b{x}_1,\dots,x_n,\b{x}_n)
=
\sum_{\overline{T}}\  
\prod_{k=1}^{n}
x_k^{\#(k)-\#(\b{k})},
\label{symp-tab}
\end{align}
where $\#(k)$ and $\#(\b{k})$ count the instances of $k$ and $\b{k}$ in $\overline{T}$, respectively.

\subsection{Symplectic tableaux as restricted interlacing sequences}

As with ordinary SSYT, any symplectic tableau $\overline{T}$ can be decomposed into a sequence of interlacing partitions, but with a length constraint imposed on every second partition:
\begin{align*}
\overline{T}
=
\{
\emptyset \equiv \b{\lambda}^{(0)} \prec \lambda^{(1)} \prec \b{\lambda}^{(1)} \prec 
\cdots
\prec \lambda^{(n)} \prec \b{\lambda}^{(n)} \equiv \lambda\ 
|\ 
\ell(\b{\lambda}^{(i)}) \leq i
\}.
\end{align*}
The constraint $\ell(\b{\lambda}^{(i)}) \leq i$ is a direct consequence of property {\bf 3} of symplectic tableaux.

For any integer $k$, the number of instances of $k$ in $\overline{T}$ is given by 
$|\lambda^{(k)}| - |\b{\lambda}^{(k-1)}|$, while that of $\b{k}$ is given by 
$|\b{\lambda}^{(k)}| - |\lambda^{(k)}|$. Combining this fact with equation \eref{symp-tab}, the symplectic character can be expressed as
\begin{align}
\label{symp-int}
sp_{\lambda}(x_1,\b{x}_1,\dots,x_n,\b{x}_n)
&=
\sum_{\overline{T}}
\ \ 
\prod_{i=1}^{n} x_i^{|\lambda^{(i)}|-|\b{\lambda}^{(i-1)}|}
\prod_{j=1}^{n} x_j^{|\lambda^{(j)}|-|\b{\lambda}^{(j)}|}
= 
\sum_{\overline{T}}
\ \ 
\prod_{i=1}^{n} x_i^{2|\lambda^{(i)}|-|\b{\lambda}^{(i)}|-|\b{\lambda}^{(i-1)}|}. 
\end{align}

\subsection{Cauchy identity for symplectic characters and associated plane partitions}

Motivated by the tableau definition \eref{symp-tab} of symplectic characters, in this section we define a class of plane partitions which (to the best of our knowledge) have not been previously studied in the literature. We take as our starting point the Cauchy identity for symplectic characters \cite{sun}:
\begin{align}
\sum_{\lambda}
s_{\lambda}(x_1,\dots,x_m)
sp_{\lambda}(y_1,\b{y}_1,\dots,y_n,\b{y}_n)
=
\frac{
\prod_{1\leq i < j \leq m} (1-x_i x_j)
}
{
\prod_{i=1}^{m}
\prod_{j=1}^{n}
(1-x_i y_j)(1-x_i \b{y}_j)
}
\label{symp-cauch}
\end{align}
where $m \leq n$, and wish to view the left hand side of \eref{symp-cauch} as a generating series for plane partitions. With that in mind, we introduce the following set of plane partitions: 
\begin{align*}
\bm{\overline{\pi}}_{m,2n}
=
\{ 
\emptyset \equiv \lambda^{(0)}
\prec \lambda^{(1)} \prec \cdots \prec \lambda^{(m)}
\equiv
\b{\mu}^{(n)} \succ \mu^{(n)} \succ \cdots \succ \b{\mu}^{(1)} \succ \mu^{(1)} \succ 
\b{\mu}^{(0)} \equiv \emptyset
\}
\end{align*}
subject to the additional constraint $\ell(\b{\mu}^{(i)}) \leq i$, for all $1 \leq i \leq n$. We refer to these as {\it symplectic plane partitions.} Using the tableaux formulae \eref{schur-int} and \eref{symp-int}, the Cauchy identity \eref{symp-cauch} can then be rephrased as
\begin{align}
\sum_{\pi \in \bm{\overline{\pi}}_{m,2n}}
\prod_{i=1}^{m} x_i^{|\lambda^{(i)}|-|\lambda^{(i-1)}|}
\prod_{j=1}^{n} y_j^{2 |\mu^{(j)}|-|\b{\mu}^{(j)}|-|\b{\mu}^{(j-1)}|}
=
\frac{
\prod_{1\leq i < j \leq m} (1-x_i x_j)
}
{
\prod_{i=1}^{m}
\prod_{j=1}^{n}
(1-x_i y_j)(1-x_i \b{y}_j)
}
\label{symp-cauch-pp}
\end{align}
which is evidently a generating function for symplectic plane partitions.

With a view to obtaining an analogue of volume-weighted plane partitions \eref{vol-pp} in the symplectic case, one can consider $q$-specializations of the parameters 
$(x_1,\dots,x_m)$ and $(y_1,\dots,y_n)$. However, care must be taken in doing so, since \eref{symp-cauch-pp} may become singular (which means that for a given weight there will be infinitely many plane partitions with that weight). Choosing $x_i = q^{m-i+3/2}$ for all $1\leq i \leq m$ and $y_j = q^{1/2}$ for all 
$1\leq j \leq n$, we reduce \eref{symp-cauch-pp} to
\begin{align}
\label{symp-pp-vol}
\sum_{\pi \in \bm{\overline{\pi}}_{m,2n}}
q^{|\pi_{\leq}|} q^{|\pi_{>}^{o}|-|\pi_{>}^{e}|} 
=
\frac{
\prod_{1\leq i < j \leq m} (1-q^{i+j+1})
}
{
\prod_{i=1}^{m} (1-q^i)^n (1-q^{i+1})^n
}
\end{align}
where we have defined $\pi_{<}$ and $\pi_{>}$ to be the left and right halves of the symplectic plane partition $\pi$, with the main diagonal {\it omitted.} $\pi_{\leq}$ and $\pi_{\geq}$ are the same, but with the main diagonal {\it included.} Similarly $\pi^{e}$ and $\pi^{o}$ are the set of even and odd slices of $\pi$. Intersections of these sets are denoted in the obvious way, \eg\ $\pi_{>}^{e}$ is the intersection of the right half and the even slices of $\pi$, excluding the main diagonal. For an illustration of the weighting \eref{symp-pp-vol} on a typical symplectic plane partition, see Figure \ref{fig:sympa}.

\begin{figure}[H]
\includegraphics[scale=0.4]{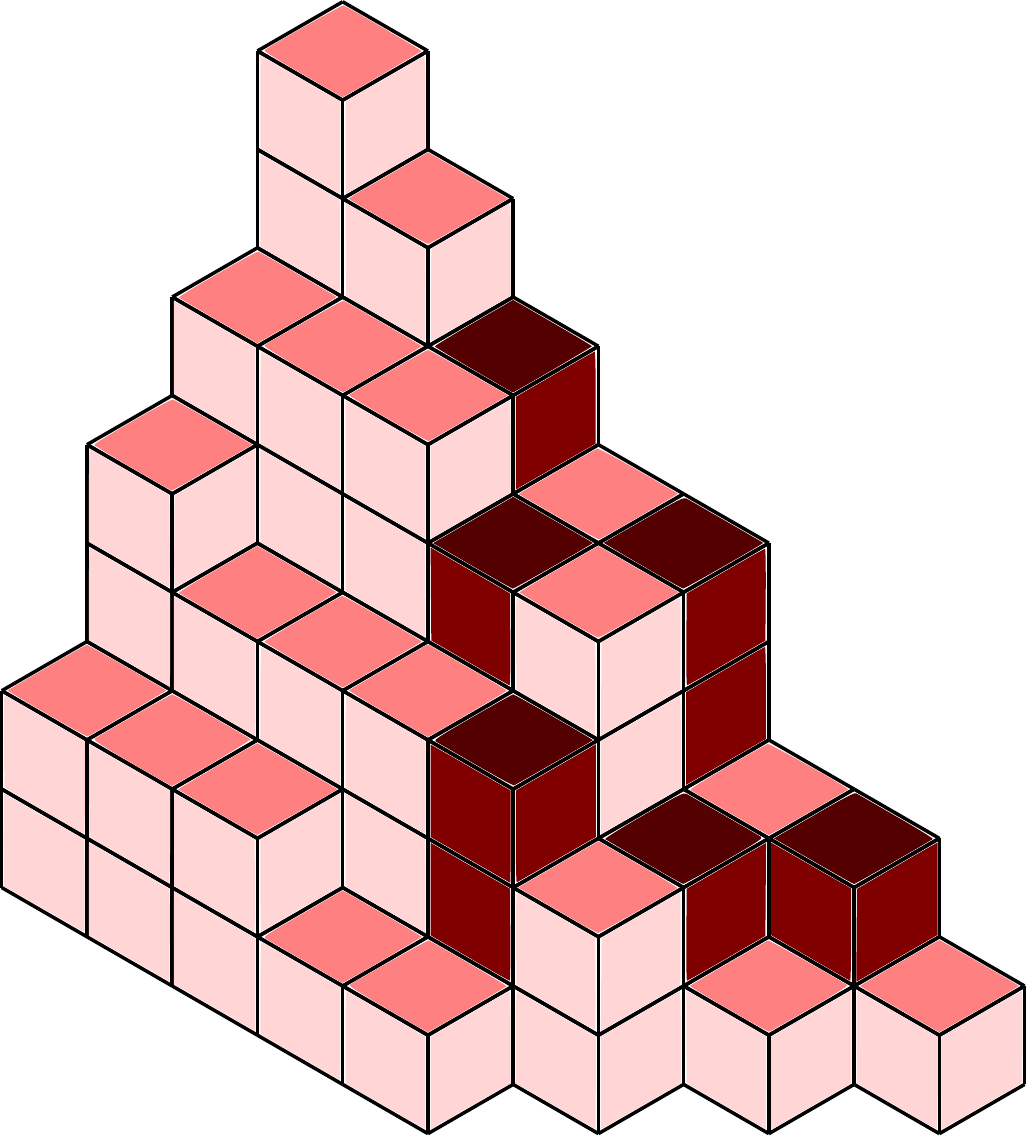}
\caption{A symplectic plane partition $\pi \in \bm{\overline{\pi}}_{4,8}$. The lightly shaded cubes contribute a positive value to the volume, while the darkly shaded cubes contribute a negative value.}
\label{fig:sympa}
\end{figure}

\subsection{A $t$-analogue of the symplectic Cauchy identity}

A natural question is whether we can $t$-deform the symplectic Cauchy identity \eref{symp-cauch}. The answer is yes, and one natural way\footnote{In a different direction, Hamel and King \cite{hk} provide another $t$-generalization of symplectic characters in terms of shifted symplectic tableaux and prove a bijection between shifted tableaux of shape $\mu$ and $\mu$-UASMs.} (at least formulaically) goes through a class of symmetric polynomials defined by Rains in \cite{rai}. The fully general definition of these functions is beyond the scope of this work (see \cite{rai}, Sections 6 and 7), but one easy way to define them is precisely via the associated Cauchy identity. We will not be concerned with the most general case, so we introduce five parameters $t,t_0,t_1,t_2,t_3$. The polynomials in question are again indexed by partitions, so if $\lambda$ is a partition with at most $\ell$ parts, we define $\tilde{K}_{\lambda} (z_1,\dots,z_{\ell};t_0,\dots,t_3;t)$ to be the coefficient of the Hall--Littlewood 
$Q$-polynomial 
$$
Q_{\lambda}(x_1,\dots,x_m;t):= 
\prod_{i=1}^{\infty} \prod_{j=1}^{m_i(\lambda)}(1-t^j) 
P_{\lambda}(x_1,\dots,x_m;t)
$$ 
in the expansion
\begin{multline}
\label{Ktilde-cauchy}
\sum_{\lambda} 
\prod_{i=1}^{\infty}
\prod_{j=1}^{m_i(\lambda)}(1-t^j) 
P_{\lambda}(x_1,\dots,x_m;t)
\tilde{K}_{\lambda} (z_1,\dots,z_{\ell};t_0,\dots,t_3;t)  
= 
\\
\prod_{i=1}^{m}
\prod_{j=1}^{\ell}
\frac{1-t x_i z_j}{1-x_i z_j} 
\prod_{1 \leq i<j \leq m} 
\frac{1-x_i x_j}{1-t x_i x_j} 
\prod_{i=1}^{m} 
\frac{(1-t_0 x_i)(1-t_1 x_i)(1-t_2 x_i)(1-t_3 x_i)}{1-t x_i^2}.
\end{multline}
By setting $t = t_0 = t_1 = t_2 = t_3 = 0$, choosing $\ell = 2n$ and
\begin{align}
\label{COV} 
z_{2i-1} = y_i,\quad z_{2i} = 1/y_i = \b{y}_i,\quad 1 \leq i \leq n,
\end{align} 
the above Cauchy identity \eref{Ktilde-cauchy} becomes the symplectic one \eref{symp-cauch} and hence the $\tilde{K}_{\lambda} = sp_{\lambda}$. In that sense, $\tilde{K}_{\lambda}$ is a lift of the hyperoctahedrally symmetric symplectic characters to the land of (ordinary $S_n$) symmetric functions. We refer the reader to \cite{rai} for more on the theory.

We are not aware of a tableau rule (or definition) for $\tilde{K}_{\lambda}$, nor of any other interesting properties that are not already listed in \cite{rai}, but we hope that the above Cauchy identity (with its nice factorized kernel) is more than a mere curiosity, seeing as it generalizes symplectic characters (and, in a different limit, orthogonal characters also). 

\subsection{$BC_n$-symmetric Hall--Littlewood polynomials}

In order to find an analogue of Theorem \ref{thm2} that generalizes equation \eref{symp-cauch}, we now introduce Hall--Littlewood polynomials of type $BC_n$. They can be seen as the limit $q \to 0$ of Koornwinder polynomials (see \cite{rai} for explicit formulas for Koornwinder polynomials). However, we will adopt Venkateswaran's point of view \cite{ven} and alternatively define them via a sum over the hyperoctahedral group in much the same way as the usual ($A_n$-symmetric) Hall--Littlewood polynomials can be defined as a sum over the symmetric group (see Section \ref{HL-def}).

The hyperoctahedral group $W(BC_n)$ on $n$ symbols is defined for our purposes as the group of $2^n n!$ signed permutations: $W(BC_n) = S_n \rtimes \mathbb{Z}_2^n$. On a set of indeterminates $\{x_1,\dots,x_n\}$ it acts by permuting some elements and possibly inverting some elements (that is, sending $x \to 1/x = \b{x}$). 

$BC_n$-symmetric Hall--Littlewood polynomials (in $n$ variables) are a special class of multivariate (hence indexed by partitions) orthogonal Laurent polynomials, symmetric under the aforementioned action of $W(BC_n)$. They depend {\it a priori} on five parameters $t, t_0, t_1, t_2, t_3$ (the same parameters descending from the theory of Koornwinder polynomials) but for our purposes we set $t_0 = t_1 = t_2 = t_3 = 0$. Following Venkateswaran \cite{ven}, let $\lambda$ be a partition with $n$ or fewer parts, and define the 
$BC_n$-symmetric Hall--Littlewood polynomial indexed by $\lambda$ as
\begin{align} 
\label{BCHL-def}
K_{\lambda}(x_1,\b{x}_1,\dots,x_n,\b{x}_n;t)
:=
\frac{1}{v_{\lambda}(t)}
\sum_{\omega \in W(BC_n)}
\omega\left(
\prod_{i=1}^{n}
\frac{x_i^{\lambda_i}}{(1-\b{x}_i^2)}
\prod_{1 \leq i<j \leq n}
\frac{(x_i-t x_j)(1-t \b{x}_i \b{x}_j )}{(x_i - x_j)(1 - \b{x}_i \b{x}_j)}
\right).
\end{align}
It is not difficult to check that this definition indeed leads to Laurent polynomials (a slight modification of the usual argument, see \eg\ \cite{mac}, applies). They are clearly $BC_n$-symmetric and we refer the reader to \cite{ven} for the proof of orthogonality and other theorems related to them (in particular, for the proof that one indeed obtains Koornwinder polynomials at $q=0$ in this way). 

We remark that when $t=0$, the $BC_n$-symmetric Hall--Littlewood polynomials $K_{\lambda}$ become symplectic characters $sp_{\lambda}$. However it is important to point out that for generic $t$, the polynomials \eref{BCHL-def} {\it do not} coincide with the $\tilde{K}_{\lambda}$ polynomials of Rains with $t_0=t_1=t_2=t_3=0$ and with variables chosen according to \eref{COV}.

Unlike the case of ordinary ($A_n$-symmetric) Hall--Littlewood polynomials, and indeed unlike symplectic characters, the authors are not aware of an explicit tableau-like formula for $BC_n$-symmetric Hall--Littlewood polynomials. Thus we cannot connect these polynomials with any types of plane partitions at the moment.

\subsection{Refined symplectic Cauchy identities}

We now state a theorem and a conjecture which, in the $BC_n$-symmetric case, mirror Theorem \ref{thm1} and Theorem \ref{thm2}. The theorem involves the expansion of a certain determinant (the partition function for UASMs under certain weights) in Schur polynomials and symplectic characters. The conjecture involves the 
$BC_n$-symmetric Hall--Littlewood polynomials defined above. In both cases, the symplectic Cauchy identity \eref{symp-cauch} is recovered as the special case $t=0$, which is easily deduced using the determinant factorization \eref{further-cauch-det}. 

\begin{thm}
\label{thm3}
\begin{multline}
\label{s-uasm}
\sum_{\lambda}
\prod_{i=1}^{n}
(1-t^{\lambda_i-i+n+1})
s_{\lambda}(x_1,\dots,x_n)
sp_{\lambda}(y_1,\b{y}_1,\dots,y_n,\b{y}_n)
\\
=
\frac{\prod_{i=1}^{n} (1-t x_i^2)}
{\Delta(x)_n \Delta(y)_n \prod_{1\leq i < j \leq n} (1-\b{y}_i \b{y}_j)}
\det
\left[
\frac{(1-t)}
{(1- x_i y_j)(1- t x_i y_j)(1- x_i \b{y}_j)(1- t x_i \b{y}_j)}
\right]_{1\leq i,j \leq n}.
\end{multline}

\end{thm}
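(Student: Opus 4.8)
\emph{Proof proposal.} The plan is to follow the same route as the proof of Theorem \ref{thm1}: replace $s_\lambda$ and $sp_\lambda$ by their Weyl determinant formulas, recognize the resulting sum over partitions as a Cauchy--Binet expansion of a single $n\times n$ determinant, and evaluate the matrix entries by summing geometric series.

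First I would substitute into the left-hand side of \eref{s-uasm} the Weyl formula $s_\lambda(x_1,\dots,x_n)=\det[x_i^{\lambda_j-j+n}]_{i,j}/\Delta(x)_n$ together with the symplectic Weyl formula stated above, pulling the denominators $\Delta(x)_n$ and $\prod_i(y_i-\b y_i)\prod_{1\leq i<j\leq n}(y_i-y_j)(1-\b y_i\b y_j)$ out of the sum. Setting $\mu_j:=\lambda_j-j+n$ turns the sum over partitions with at most $n$ parts into a sum over strictly decreasing tuples $\mu_1>\cdots>\mu_n\geq 0$: the Schur numerator becomes $\det[x_i^{\mu_j}]$, the symplectic numerator becomes $\det[y_i^{\mu_j+1}-\b y_i^{\mu_j+1}]$, and the weight becomes $\prod_i(1-t^{\mu_i+1})$, which I absorb column-wise into the symplectic determinant.

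Next I would invoke the Cauchy--Binet identity in the form $\sum_{k_1<\cdots<k_n}\det[A_{i,k_j}]\det[B_{k_i,j}]=\det[(AB)_{ij}]$ with $A_{i,k}=x_i^k$ and $B_{k,j}=(1-t^{k+1})(y_j^{k+1}-\b y_j^{k+1})$ (valid as a formal identity after expanding in powers of the $x_i$, exactly as in Theorem \ref{thm1}; any stray sign from reversing columns cancels between the two determinants). The resulting $n\times n$ determinant has $(i,j)$ entry $\sum_{k\geq 0}x_i^k(1-t^{k+1})(y_j^{k+1}-\b y_j^{k+1})$, which I would evaluate using $\sum_{k\geq 0}(1-t^{k+1})u^k=(1-t)/\big((1-u)(1-tu)\big)$ at $u=x_iy_j$ and at $u=x_i\b y_j$. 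Combining the two terms over the common denominator $(1-x_iy_j)(1-tx_iy_j)(1-x_i\b y_j)(1-tx_i\b y_j)$ and using $y_j\b y_j=1$ should collapse the numerator to $(y_j-\b y_j)(1-tx_i^2)$, so that the $(i,j)$ entry equals $(1-t)(y_j-\b y_j)(1-tx_i^2)$ divided by that denominator.

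Finally I would factor the row scalars $\prod_i(1-tx_i^2)$ and the column scalars $\prod_j(y_j-\b y_j)$ out of the determinant; the column scalars cancel the factor $\prod_i(y_i-\b y_i)$ coming from the symplectic Weyl denominator, and what remains is precisely the right-hand side of \eref{s-uasm}. I do not expect a genuine obstacle here: the only real work is the partial-fraction bookkeeping that produces the clean numerator $(y_j-\b y_j)(1-tx_i^2)$, and the limit $t=0$ (where the determinant factorizes by \eref{further-cauch-det}) recovers the symplectic Cauchy identity \eref{symp-cauch} as a consistency check. As an alternative one could instead act on the $m=n$ symplectic Cauchy identity \eref{symp-cauch}, in the $x$ variables, with the Macdonald difference operator $D_n(-t;t,t)$, whose Schur eigenvalue $\prod_i(1-t^{\lambda_i-i+n+1})$ reproduces the left-hand side weight exactly as in the first proof option for Theorem \ref{thm1}; but the direct computation above is shorter.
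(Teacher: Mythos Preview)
Your proposal is correct and is essentially the paper's own proof run in the opposite direction: the paper starts from the determinant, extracts $\prod_i(y_i-\b y_i)^{-1}$ and $\prod_i(1-tx_i^2)$ via the same partial-fraction identity you use, expands the entries as $\sum_{k\geq 0}(1-t^{k+1})x_i^k(y_j^{k+1}-\b y_j^{k+1})$, and then applies Cauchy--Binet to reach the sum over $\lambda$; you begin from the sum, apply Cauchy--Binet to collapse to a determinant, and then resum the geometric series. The paper likewise notes the alternative via $D_n(-t;t,t)$ acting on \eref{symp-cauch}.
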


\begin{proof}
Similarly to the case of Theorem \ref{thm1}, it is possible to prove this identity by acting on the $m=n$ symplectic Cauchy identity \eref{symp-cauch} with the generating series \eref{do-gs}, evaluated at $z=-t$ and $q=t$. We will again pursue a simpler proof, which directly parallels our proof of Theorem \ref{thm1}. We take the determinant in \eref{s-uasm}, and (after making some trivial manipulations) treat its entries as formal power series:
\begin{align*}
&
\prod_{i=1}^{n}
(1-t x_i^2)
\det
\left[
\frac{(1-t)}
{(1- x_i y_j)(1- t x_i y_j)(1- x_i \b{y}_j)(1- t x_i \b{y}_j)}
\right]_{1\leq i,j \leq n}
\\
&=
\prod_{i=1}^{n}
\frac{1}{(y_i - \b{y}_i)}
\det
\left[
\frac{(1-t)y_j}{(1- x_i y_j)(1- t x_i y_j)}
-
\frac{(1-t)\b{y}_j}{(1- x_i \b{y}_j)(1- t x_i \b{y}_j)}
\right]_{1 \leq i,j \leq n}
\\
&=
\prod_{i=1}^{n}
\frac{1}{(y_i - \b{y}_i)}
\det
\left[
\sum_{k=0}^{\infty}
(1-t^{k+1}) x_i^k (y_j^{k+1} - \b{y}_j^{k+1})
\right]_{1 \leq i,j \leq n}
\\
&=
\prod_{i=1}^{n}
\frac{1}{(y_i - \b{y}_i)}
\sum_{k_1 > \cdots > k_n \geq 0}
\det\left[
x_i^{k_j}
\right]_{1\leq i,j \leq n}
\det\left[
y_j^{k_i+1} - \b{y}_j^{k_i+1}
\right]_{1\leq i,j \leq n}
\prod_{i=1}^{n}
(1-t^{k_i+1}).
\end{align*}
The proof is completed by making the previous change of summation indices 
$k_i = \lambda_i - i + n$, and dividing by the factor 
$\Delta(x)_n \Delta(y)_n \prod_{1 \leq i<j \leq n} (1-\b{y}_i \b{y}_j)$.
\end{proof}

The following conjecture is surprising, and has been checked in Mathematica for small enough partitions. More precisely, we have checked that if we expand the right hand side determinant in 
Hall--Littlewood polynomials and take their coefficients corresponding to small enough partitions, those coefficients equal $BC_n$-symmetric Hall--Littlewood polynomials (for the same partition). 

\begin{conj}
\label{conj1}
\begin{multline}
\label{uasm-conj}
\sum_{\lambda}
\prod_{i=0}^{\infty}
\prod_{j=1}^{m_i(\lambda)}
(1-t^j)
P_{\lambda}(x_1,\dots,x_n;t)
K_{\lambda}(y_1,\b{y}_1,\dots,y_n,\b{y}_n;t)
\\
=
\frac{
\prod_{i,j=1}^{n} 
(1-t x_i y_j) 
(1-t x_i \b{y}_j)
}
{\prod_{1\leq i<j \leq n} (x_i-x_j) (y_i-y_j) (1 - t x_i x_j) (1 - \b{y}_i \b{y}_j) }
\det\left[
\frac{(1-t)}{(1- x_i y_j)(1-t x_i y_j)(1- x_i \b{y}_j)(1-t x_i \b{y}_j)}
\right]_{1\leq i,j \leq n}.
\end{multline}
\end{conj}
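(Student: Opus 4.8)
The plan is to mimic the derivation of Theorem~\ref{thm2} from the Hall--Littlewood Cauchy identity: act on a Cauchy-type summation formula whose left-hand side is $\sum_\lambda b_\lambda(t)P_\lambda(x;t)K_\lambda(y;t)$ with a specialized Macdonald difference operator. The input one needs on the left is a Cauchy identity pairing the ($A_{n-1}$-symmetric) Hall--Littlewood polynomial $P_\lambda$ with the $BC_n$-symmetric Hall--Littlewood polynomial $K_\lambda$ --- the natural $t$-analogue of the symplectic Cauchy identity~\eref{symp-cauch} --- of the shape
\begin{align*}
\sum_{\lambda}
\prod_{i=1}^{\infty}\prod_{j=1}^{m_i(\lambda)}(1-t^j)\;
P_{\lambda}(x_1,\dots,x_n;t)\,
K_{\lambda}(y_1,\b{y}_1,\dots,y_n,\b{y}_n;t)
=
\Pi(x;y),
\end{align*}
with $\Pi(x;y)$ an explicit product degenerating at $t=0$ to the right-hand side of~\eref{symp-cauch}. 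Pinning down $\Pi$ exactly in Venkateswaran's normalization~\eref{BCHL-def} (see~\cite{ven}) is the first task; I expect it to be $\prod_{i,j}\frac{(1-tx_iy_j)(1-tx_i\b{y}_j)}{(1-x_iy_j)(1-x_i\b{y}_j)}$ times $\prod_{i<j}\frac{1-x_ix_j}{1-tx_ix_j}$ times, possibly, a $\prod_i(1-tx_i^2)^{-1}$ factor.

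Granting this, the argument proceeds exactly as for Theorem~\ref{thm2}. One records the $q=0$, $z=-t$ case of the eigenvalue relation~\eref{mac-eig},
\begin{align*}
D_n(-t;0,t)\,P_\lambda(x_1,\dots,x_n;t)
=
\prod_{i=1}^{n}(1-\delta_{\lambda_i,0}\,t^{\,n-i+1})\,P_\lambda(x_1,\dots,x_n;t),
\end{align*}
applies $D_n(-t;0,t)$ in the $x$-variables to the Cauchy identity above, and uses that $\prod_{i=1}^{n}(1-\delta_{\lambda_i,0}t^{n-i+1})=\prod_{j=1}^{n-\ell(\lambda)}(1-t^j)=\prod_{j=1}^{m_0(\lambda)}(1-t^j)$ for $\ell(\lambda)\le n$; the left-hand side then becomes exactly the left-hand side of~\eref{uasm-conj}. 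Thus the conjecture is reduced to the single identity asserting that $D_n(-t;0,t)\,\Pi(x;y)$ equals the right-hand side of~\eref{uasm-conj}.

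To prove that last identity I would follow the template in the proof of Theorem~\ref{thm2}, replacing the Cauchy determinant~\eref{cauch-det} by its $BC$-analogue, the ``further Cauchy determinant''~\eref{further-cauch-det}. Expand each entry of the Izergin-type determinant on the right of~\eref{uasm-conj} via
\begin{align*}
\frac{(1-t)}{(1-x_iy_j)(1-tx_iy_j)(1-x_i\b{y}_j)(1-tx_i\b{y}_j)}
=
\frac{1}{1-t}\left(\frac{1}{1-x_iy_j}-\frac{t}{1-tx_iy_j}\right)\left(\frac{1}{1-x_i\b{y}_j}-\frac{t}{1-tx_i\b{y}_j}\right),
\end{align*}
expand the determinant multilinearly over the resulting choices, evaluate each Cauchy-type subdeterminant in closed form by~\eref{cauch-det} (with certain $x_i$'s rescaled by $t$), restore the prefactors, and check that the outcome is precisely $D_n(-t;0,t)\,\Pi$ as read off from the definition~\eref{do}--\eref{do-gs} of the difference operator.

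The hard part is this final matching. In the plain ASM case each determinant entry is a single simple fraction up to the shift $x_i\mapsto tx_i$, so the expansion collapses to $2^n$ terms indexed by subsets $S\subseteq[n]$, in exact correspondence with the summands $D_n^r$. Here each entry is \emph{quadratic} in simple fractions, so the naive expansion produces on the order of $4^n$ terms carrying \emph{mixed} $y_j$ versus $\b{y}_j$ shifts, and one must resum them --- exploiting the pairing of the $y$'s into $\{y_j,\b{y}_j\}$ and the $W(BC_n)$-symmetry of both sides --- back into the $2^n$ terms of the difference operator; controlling this resummation, and in particular the new $\prod_{i<j}(1-tx_ix_j)^{-1}$ and $\prod_i(1-tx_i^2)^{-1}$ factors that have no analogue in Theorem~\ref{thm2}, is where the genuine obstacle lies, which is presumably why the statement remains a conjecture. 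A more robust alternative would avoid the difference operator altogether: characterize $Z_{\rm UASM}$ by Izergin--Korepin-type conditions (symmetry in the $x$'s, $W(BC_n)$-symmetry in the $y$'s, degree bounds in one variable, and a recursion under a specialization such as $x_ny_n\to 1$ or $x_n\b{y}_n\to t^{-1}$), and verify that the left-hand side of~\eref{uasm-conj} satisfies the same conditions, using a $BC_n\!\downarrow\!BC_{n-1}$ branching rule for $K_\lambda$ (from~\cite{ven}) for the inductive step.
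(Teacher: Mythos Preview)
The paper does not prove this statement at all: it is presented as a conjecture, supported only by Mathematica checks for small partitions. So there is no proof in the paper to compare against; your proposal is an attempted proof of an open statement.

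Your plan has a genuine gap at its very first step. You posit a Cauchy identity of the form
\[
\sum_{\lambda} b_\lambda(t)\,P_\lambda(x_1,\dots,x_n;t)\,K_\lambda(y_1,\b{y}_1,\dots,y_n,\b{y}_n;t)=\Pi(x;y)
\]
with a product-form right-hand side, and then apply $D_n(-t;0,t)$ to it. But the paper explicitly addresses exactly this in the Discussion: ``we do not even know of a suitable Cauchy identity involving the $BC_n$-symmetric Hall--Littlewood polynomials, upon which we could act.'' The product-form Cauchy identity that \emph{is} available, equation~\eref{Ktilde-cauchy}, pairs $P_\lambda$ with Rains' lifted polynomials $\tilde{K}_\lambda$, and the paper stresses that for generic $t$ these \emph{do not} coincide with the $K_\lambda$ of~\eref{BCHL-def} under the substitution~\eref{COV}. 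So the input you are ``granting'' is precisely the missing ingredient, not a routine lookup in~\cite{ven}; your expected form of $\Pi(x;y)$ is essentially the $\tilde{K}_\lambda$ kernel, which pairs with the wrong family. Without that identity the difference-operator argument never gets off the ground, and you yourself note that even if it did, the $4^n$-term resummation has no obvious collapse to the $2^n$ terms of $D_n(-t;0,t)$.

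Your alternative suggestion, to characterize both sides by Izergin--Korepin/Tsuchiya-type uniqueness conditions and match them inductively, is more promising in spirit, but it too rests on an unavailable input: a usable $BC_n\downarrow BC_{n-1}$ branching rule for $K_\lambda$. The paper again flags this gap (``the authors are not aware of an explicit tableau-like formula for $BC_n$-symmetric Hall--Littlewood polynomials''), and without control over how $K_\lambda$ restricts under a specialization like $x_n y_n\to 1$, the recursive step cannot be verified. In short, both routes you sketch require structural results about $K_\lambda$ that are, as of this paper, not known; identifying those obstacles is accurate, but the proposal does not overcome them.
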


\subsection{Refined symplectic plane partitions}

The combinatorial meaning of equation \eref{s-uasm} is quite analogous to that of \eref{s-cauchy-refine}, in Section \ref{sec:refined-cauchy}. Namely, it is a $t$-refinement of the generating series \eref{symp-cauch-pp}, which assigns a $t$-dependent weight to the central slice of the symplectic plane partition:
\begin{multline}
\label{symp-pp-UASM}
\sum_{\pi \in \bm{\overline{\pi}}_{n,2n}}
\prod_{i=1}^{n} (1-t^{\pi(i,i)-i+n+1})
\prod_{i=1}^{n} 
x_i^{|\lambda^{(i)}|-|\lambda^{(i-1)}|}
y_i^{2 |\mu^{(i)}|-|\b{\mu}^{(i)}|-|\b{\mu}^{(i-1)}|}
=
\\
\frac{\prod_{i=1}^{n} (1-t x_i^2)}
{\Delta(x)_n \Delta(y)_n \prod_{1\leq i < j \leq n} (1-\b{y}_i \b{y}_j)}
\det
\left[
\frac{(1-t)}
{(1- x_i y_j)(1- t x_i y_j)(1- x_i \b{y}_j)(1- t x_i \b{y}_j)}
\right]_{1\leq i,j \leq n}.
\end{multline}
By setting $t=0$, one recovers the original identity \eref{symp-cauch-pp} at $m=n$. Unfortunately, we have not been able to find an appropriate combinatorial interpretation for equation \eref{uasm-conj}. The main obstacle here is the absence of a tableau rule for the $BC_n$-symmetric Hall--Littlewood polynomials, which is an essential requirement for translating from Cauchy-type identities to plane partitions.

\subsection{Six-vertex model on UASM lattice}
\label{sec:U-vert}

We return our attention to the six-vertex model, but on a different domain to the one considered in Section \ref{sec:6v}. All previous conventions apply regarding the vertices of the model and their Boltzmann weights, but we introduce two additional (boundary) U-turn vertices, shown in Figure \ref{fig:U}.

\begin{figure}[H]
\begin{tabular}{cc}
\begin{tikzpicture}[scale=0.6]
\draw[thick, smooth] ({2*sqrt(2) + 3 + 4},2) arc (90:-90:1);
\draw[thick, smooth] ({2*sqrt(2) + 3 + 3.5},2)--({2*sqrt(2) + 3 + 4},2);
\draw[thick, smooth] ({2*sqrt(2) + 3 + 3.5},0)--({2*sqrt(2) + 3 + 4},0);
\node at ({2*sqrt(2) + 3 + 5},1) {$\bullet$};
\node at ({2*sqrt(2) + 3 + 4},0) {\r};
\node at ({2*sqrt(2) + 3 + 4},2) {\l};
\node[label={left: \fs ${\color{red} \shortrightarrow} \ x$}] at ({2*sqrt(2) + 6.5},0) {};
\node[label={left: \fs ${\color{red} \shortleftarrow} \ \b{x}$}] at ({2*sqrt(2) + 6.5},2) {};
\end{tikzpicture}
\quad\quad\quad&
\begin{tikzpicture}[scale=0.6]
\draw[thick, smooth] ({2*sqrt(2) + 3 + 4},2) arc (90:-90:1);
\draw[thick, smooth] ({2*sqrt(2) + 3 + 3.5},2)--({2*sqrt(2) + 3 + 4},2);
\draw[thick, smooth] ({2*sqrt(2) + 3 + 3.5},0)--({2*sqrt(2) + 3 + 4},0);
\node at ({2*sqrt(2) + 3 + 5},1) {$\bullet$};
\node at ({2*sqrt(2) + 3 + 4},0) {\l};
\node at ({2*sqrt(2) + 3 + 4},2) {\r};
\node[label={left: \fs ${\color{red} \shortrightarrow} \ x$}] at ({2*sqrt(2) + 6.5},0) {};
\node[label={left: \fs ${\color{red} \shortleftarrow} \ \b{x}$}] at ({2*sqrt(2) + 6.5},2) {};
\end{tikzpicture}
\\ \\
$k_{+}(x)$
\quad\quad\quad&
$k_{-}(x)$
\end{tabular}
\caption{The U-turn vertices with their Boltzmann weights indicated underneath.}
\label{fig:U}
\end{figure}
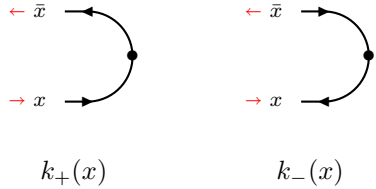
In contrast to the regular vertices of the model, the U-turn vertices consist of a single line, and a single associated rapidity variable. They are also assigned Boltzmann weights, which in the most general setting (see \eg\ \cite{kup2}) can be functions depending both on the rapidity $x$ and another free parameter $b$. We are interested in a degenerate limit of this more general case, in which the Boltzmann weights become equal:
\begin{align}
\label{U-vert}
k_{+}(x) = k_{-}(x) = \frac{1}{1-x^2}.
\end{align}
The reason that we adopt this choice for the boundary weights (including their normalization) is so that we obtain exact agreement between the quantity appearing on the right hand side of Conjecture \ref{conj1}, and the partition function that we subsequently discuss. Together with the weights assigned to the bulk vertices \eref{boltz}, the boundary weights \eref{U-vert} satisfy Sklyanin's reflection equation \cite{skl}. The graphical version of the reflection equation is given in Figure \ref{fig:refl}.

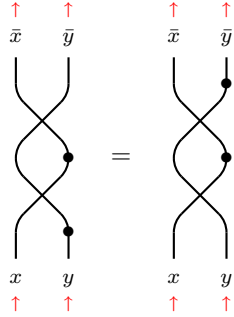
\begin{figure}[H]
\begin{tikzpicture}[scale=0.35]

\node[label={below: \fs ${\color{red} \begin{array}{c} {\color{black} x} \\ \shortuparrow \end{array} }$}] at (1,0.5) {};
\node[label={below: \fs ${\color{red} \begin{array}{c} {\color{black} y} \\ \shortuparrow \end{array} }$}] at (3,0.5) {};
\draw[thick,smooth] (1,0)--(1,1); 
\draw[thick,smooth] (3,0)--(3,1);
\draw[thick,smooth] (1,1) arc (180:135:1);
\draw[thick,smooth] (3,1) arc (0:45:1);
\draw[thick, smooth] ({2-sqrt(2)/2},{1+sqrt(2)/2})--({2+sqrt(2)/2},{1+3*sqrt(2)/2});
\draw[thick, smooth] ({2+sqrt(2)/2},{1+sqrt(2)/2})--({2-sqrt(2)/2},{1+3*sqrt(2)/2});
\draw[thick, smooth] (3,{1 + 2*sqrt(2)}) arc (0:-45:1);
\draw[thick, smooth] (1,{1 + 2*sqrt(2)}) arc (-180:-135:1);
\draw[thick, smooth] (3,{1 + 2*sqrt(2)}) arc (0:45:1);
\draw[thick, smooth] (1,{1 + 2*sqrt(2)}) arc (180:135:1);
\draw[thick, smooth] ({2-sqrt(2)/2},{1+5*sqrt(2)/2})--({2+sqrt(2)/2},{1+7*sqrt(2)/2});
\draw[thick, smooth] ({2+sqrt(2)/2},{1+5*sqrt(2)/2})--({2-sqrt(2)/2},{1+7*sqrt(2)/2});
\draw[thick, smooth] (3,{1 + 4*sqrt(2)}) arc (0:-45:1);
\draw[thick, smooth] (1,{1 + 4*sqrt(2)}) arc (-180:-135:1);
\draw[thick,smooth] (1,{1 + 4*sqrt(2)})--(1,{1 + 4*sqrt(2) + 1}); 
\draw[thick,smooth] (3,{1 + 4*sqrt(2)})--(3,{1 + 4*sqrt(2) + 1});
\node[label={above: \fs ${\color{red} \begin{array}{c}  \shortuparrow \\ {\color{black} \b{x}} \end{array} }$}] at (1,{1 + 3*sqrt(2) + 2}) {};
\node[label={above: \fs ${\color{red} \begin{array}{c}  \shortuparrow \\ {\color{black} \b{y}} \end{array} }$}] at (3,{1 + 3*sqrt(2) + 2}) {};

\node at (3,1) {$\bullet$};
\node at (3,{1+2*sqrt(2)}) {$\bullet$};

\node at (5,{(1+2*sqrt(2))}) {$=$};

\node[label={below: \fs ${\color{red} \begin{array}{c} {\color{black} x} \\ \shortuparrow \end{array} }$}] at (7,0.5) {};
\node[label={below: \fs ${\color{red} \begin{array}{c} {\color{black} y} \\ \shortuparrow \end{array} }$}] at (9,0.5) {};

\draw[thick,smooth] (7,0)--(7,1); 
\draw[thick,smooth] (9,0)--(9,1);
\draw[thick,smooth] (7,1) arc (180:135:1);
\draw[thick,smooth] (9,1) arc (0:45:1);
\draw[thick, smooth] ({8-sqrt(2)/2},{1+sqrt(2)/2})--({8+sqrt(2)/2},{1+3*sqrt(2)/2});
\draw[thick, smooth] ({8+sqrt(2)/2},{1+sqrt(2)/2})--({8-sqrt(2)/2},{1+3*sqrt(2)/2});
\draw[thick, smooth] (9,{1 + 2*sqrt(2)}) arc (0:-45:1);
\draw[thick, smooth] (7,{1 + 2*sqrt(2)}) arc (-180:-135:1);
\draw[thick, smooth] (9,{1 + 2*sqrt(2)}) arc (0:45:1);
\draw[thick, smooth] (7,{1 + 2*sqrt(2)}) arc (180:135:1);
\draw[thick, smooth] ({8-sqrt(2)/2},{1+5*sqrt(2)/2})--({8+sqrt(2)/2},{1+7*sqrt(2)/2});
\draw[thick, smooth] ({8+sqrt(2)/2},{1+5*sqrt(2)/2})--({8-sqrt(2)/2},{1+7*sqrt(2)/2});
\draw[thick, smooth] (9,{1 + 4*sqrt(2)}) arc (0:-45:1);
\draw[thick, smooth] (7,{1 + 4*sqrt(2)}) arc (-180:-135:1);
\draw[thick,smooth] (7,{1 + 4*sqrt(2)})--(7,{1 + 4*sqrt(2) + 1}); 
\draw[thick,smooth] (9,{1 + 4*sqrt(2)})--(9,{1 + 4*sqrt(2) + 1});

\node[label={above: \fs ${\color{red} \begin{array}{c}  \shortuparrow \\ {\color{black} \b{x}} \end{array} }$}] at (7,{1 + 3*sqrt(2) + 2}) {};
\node[label={above: \fs ${\color{red} \begin{array}{c}  \shortuparrow \\ {\color{black} \b{y}} \end{array} }$}] at (9,{1 + 3*sqrt(2) + 2}) {};

\node at (9,{1+2*sqrt(2)}) {$\bullet$};
\node at (9,{1+4*sqrt(2)}) {$\bullet$};

\end{tikzpicture}
\caption{Sklyanin's reflection equation. As with the regular Yang--Baxter equation, a fixed and common choice is made for the arrows on the four external edges on both sides of this equation. The internal edges are summed over. Hence this gives rise to $2^4$ identities involving the Boltzmann weights. Notice that the rapidity variable on each line is reciprocated after the line passes through the dot $\bullet$ situated at the boundary.}
\label{fig:refl}
\end{figure}
The partition function relevant to this section is one that was first considered by Tsuchiya \cite{tsu}, namely the domain wall partition function with a reflecting boundary, see Figure \ref{fig:refl-dwpf}. When used in conjunction with the regular Yang--Baxter equation, the reflection equation allows one to deduce that this partition function is symmetric in the set $\{x_1,\dots,x_n\}$ (while symmetry in $\{y_1,\dots,y_n\}$ just follows from the Yang--Baxter equation itself).

\begin{figure}[H]
\begin{tikzpicture}[scale=0.6]
\foreach\x in {1,...,8}
\draw[thick]
(0,\x) -- (4.5,\x);

\foreach\x in {1,...,4}
\node[label={left: \fs ${\color{red} \shortrightarrow} \ x_{\x}$}] at (0,10-2*\x-1) {};
\foreach\x in {1,...,4}
\node[label={left: \fs ${\color{red} \shortleftarrow} \ \b{x}_{\x}$}] at (0,10-2*\x) {};

\foreach\y in {1,...,8}
\node at (0.5,\y) {\r};

\foreach\x in {1,...,4}
\draw[thick]
(\x,0) -- (\x,9);
\foreach\x in {1,...,4}
\node[label={below: \fs ${\color{red} \begin{array}{c} {\color{black} \b{y}_{\x}} \\ \shortuparrow \end{array} }$}] at (5-\x,0) {};

\foreach\x in {1,...,4}{
\node at (\x,0.5) {\d};
\node at (\x,8.5) {\u};
}

\foreach\x in {1,...,4}
\draw[thick, smooth] (4.5,2*\x-1) arc (-90:90:0.5);
\foreach\x in {1,...,4}
\node at (5,2*\x-1/2) {$\bullet$};

\end{tikzpicture}
\caption{$Z_{\rm UASM}$, the six-vertex model partition function with reflecting domain wall boundary conditions in the case $n=4$. We draw attention to the fact that every second horizontal line has right-to-left orientation. This must be taken into consideration when evaluating the Boltzmann weights in these rows, following the convention explained in the caption of Figure \ref{6vertices}.}
\label{fig:refl-dwpf}
\end{figure}
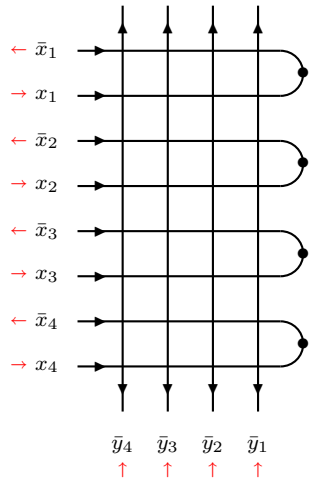
Tsuchiya was able to evaluate this partition function in determinant form \cite{tsu}:
\begin{multline}
\label{tsu-det}
Z_{\rm UASM}(x_1,\dots,x_n; y_1,\b{y}_1,\dots,y_n,\b{y}_n; t)
=
\\
\frac{
\prod_{i,j=1}^{n} 
(1-t x_i y_j) 
(1-t x_i \b{y}_j)
}
{\prod_{1\leq i<j \leq n} (x_i-x_j) (y_i-y_j) (1 - t x_i x_j) (1 - \b{y}_i \b{y}_j) }
\det\left[
\frac{(1-t)}{(1- x_i y_j)(1-t x_i y_j)(1- x_i \b{y}_j)(1-t x_i \b{y}_j)}
\right]_{1\leq i,j \leq n}.
\end{multline}
As in the case of the DWPF, one can prove this determinant expression by writing down a set of conditions which uniquely determine $Z_{\rm UASM}$, and showing that \eref{tsu-det} satisfies these conditions.

As our notation suggests, $Z_{\rm UASM}$ can be considered as a multiparameter generating series of U-turn ASMs \cite{kup2}. At the same time, up to some overall product terms which differ, $Z_{\rm UASM}$ appears on the right hand side of \eref{symp-pp-UASM}. Hence \eref{symp-pp-UASM} relates a generating series of symplectic plane partitions with a generating series of UASMs (up to proportionality). We would like to make a similar statement regarding \eref{uasm-conj}, since $Z_{\rm UASM}$ appears {\it identically} on the right hand side, but are unable to do so since we have no combinatorial interpretation of the left hand side.

\subsection{Rectangular domains with reflecting boundaries}
\label{ssec:p-uasm}

Here we perform similar analysis to that of Section \ref{ssec:p-dwpf}, and consider the case where the cardinalities of the two sets of variables in \eref{uasm-conj} are not equal. The specialization of interest is once again $x_{m+1} = \cdots = x_n = 0$, for some $m<n$. Calculating the left hand side of \eref{uasm-conj} in this limit is trivial, while the right hand side can be computed by manipulating the Tsuchiya determinant into the following form:    
\begin{align*}
\det\left[
\frac{(1-t)}{(1- x_i y_j)(1-t x_i y_j)(1- x_i \b{y}_j)(1-t x_i \b{y}_j)}
\right]_{1\leq i,j \leq n}
=
\frac{
\prod_{m+1 \leq i<j \leq n}
(x_i-x_j)
}
{
\prod_{i=m+1}^{n}
(1-t x_i^2)
}
\det\Big[
\mathcal{U}_{i,j}
\Big]_{1 \leq i,j \leq n},
\end{align*}
where the entries of the final determinant are given by
\begin{align*}
\mathcal{U}_{i,j}
=
\left\{
\begin{array}{ll}
\mathcal{U}_{i,j}(x_i;y_j)
=
\displaystyle{
\frac{(1-t)}{(1-x_i y_j)(1-t x_i y_j)(1-x_i \b{y}_j)(1-t x_i \b{y}_j)}
},
\quad
&  
1 \leq i \leq m,
\\ \\ 
\mathcal{U}_{i,j}(x_i,\dots,x_n;y_j)
=
\displaystyle{\sum_{k=0}^{\infty}}
(1-t^{k+1})h_{k+i-n}(x_i,\dots,x_n)
(y_j^{k+1}-\b{y}_j^{k+1})/(y_j-\b{y}_j),
\quad
& 
m+1 \leq i \leq n.
\end{array}
\right.
\end{align*}
After this rearrangement of the determinant in \eref{uasm-conj}, $x_i \rightarrow 0$ for all $m+1 \leq i \leq n$ is no longer a singular limit. We therefore obtain:
\newline

\noindent \textbf{Conjecture 1$\bm{'}$.}
\begin{multline}
\label{uasm-conj-pdwpf}
\sum_{\lambda}
\prod_{j=1}^{n-\ell(\lambda)}
(1-t^j)
b_{\lambda}(t)
P_{\lambda}(x_1,\dots,x_m;t)
K_{\lambda}(y_1,\b{y}_1,\dots,y_n,\b{y}_n;t)
=
\\
\frac{
\prod_{i=1}^{n-m} (1-t^i)
\prod_{i=1}^{m} \prod_{j=1}^{n} 
(1-t x_i y_j) 
(1-t x_i \b{y}_j)
}
{\prod_{i=1}^{m} (x_i^{n-m}) \prod_{1\leq i<j \leq m} (x_i-x_j) (1 - t x_i x_j) 
\prod_{1\leq i<j \leq n} (y_i-y_j) (1 - \b{y}_i \b{y}_j)}
\det\Big[
\mathcal{U}^{\circ}_{i,j}
\Big]_{1 \leq i,j \leq n},
\end{multline} 
\textit{where the entries of the determinant are given by}
\begin{align*}
\mathcal{U}^{\circ}_{i,j}
=
\frac{(1-t)}{(1-x_i y_j)(1-t x_i y_j)(1-x_i \b{y}_j)(1-t x_i \b{y}_j)},
\ \  
1 \leq i \leq m,
\quad
\mathcal{U}^{\circ}_{i,j}
=
\frac{(y_j^{n-i+1}-\b{y}_j^{n-i+1})}{(y_j-\b{y}_j)},
\ \ 
m+1 \leq i \leq n.
\end{align*}

As we did in Section \ref{ssec:p-dwpf}, we consider the meaning of the right hand side of \eref{uasm-conj-pdwpf} in six-vertex model terms. Due to the trivial form of the Boltzmann weights \eref{triv-boltz} in this limit, no $c_{-}$ vertices are allowed in the bottom $n-m$ double rows of Figure \ref{fig:refl-dwpf}. This forces the U-turn vertex at the end of each of these double rows to be in the $k_{-}(x_i)$ configuration, since otherwise their top row would necessarily contain a $c_{-}$ vertex. After making this observation, the methods of \cite{fw2} can be applied in much the same way to calculate the required limit. Once again, a simple calculation shows that these double rows contribute only the multiplicative factor $\prod_{i=1}^{n-m} (1-t^i)$ to the partition function, and they can thus be deleted from the lattice. The bottom edges of the resulting lattice are summed over all arrow configurations, as illustrated in Figure \ref{fig:p-refl-dwpf}. We denote this partition function by $Z_{\rm UASM}(x_1,\dots,x_m; y_1,\b{y}_1,\dots,y_n,\b{y}_n;t)$. Cancelling the factor of $\prod_{i=1}^{n-m} (1-t^i)$ from the right hand side of \eref{uasm-conj-pdwpf}, it is equal to 
$Z_{\rm UASM}(x_1,\dots,x_m; y_1,\b{y}_1,\dots,y_n,\b{y}_n;t)$.

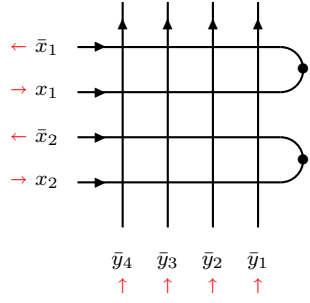
\begin{figure}[H]
\begin{tikzpicture}[scale=0.6]
\foreach\x in {1,...,4}
\draw[thick]
(0,\x) -- (4.5,\x);

\foreach\x in {1,...,2}
\node[label={left: \fs ${\color{red} \shortrightarrow} \ x_{\x}$}] at (0,6-2*\x-1) {};
\foreach\x in {1,...,2}
\node[label={left: \fs ${\color{red} \shortleftarrow} \ \b{x}_{\x}$}] at (0,6-2*\x) {};

\foreach\y in {1,...,4}
\node at (0.5,\y) {\r};

\foreach\x in {1,...,4}
\draw[thick]
(\x,0) -- (\x,5);
\foreach\x in {1,...,4}
\node[label={below: \fs ${\color{red} \begin{array}{c} {\color{black} \b{y}_{\x}} \\ \shortuparrow \end{array} }$}] at (5-\x,0) {};

\foreach\x in {1,...,4}
\node at (\x,4.5) {\u};

\foreach\x in {1,...,2}
\draw[thick, smooth] (4.5,2*\x-1) arc (-90:90:0.5);
\foreach\x in {1,...,2}
\node at (5,2*\x-1/2) {$\bullet$};

\end{tikzpicture}
\caption{$Z_{\rm UASM}(x_1,\dots,x_m; y_1,\b{y}_1,\dots,y_n,\b{y}_n;t)$ in the case $m=2$ and $n=4$, which results from setting $x_{m+1} = \cdots = x_n = 0$ in Figure \ref{fig:refl-dwpf}. The lower external edges are left blank, to indicate summation over all possible arrow configurations.}
\label{fig:p-refl-dwpf}
\end{figure}



\section{Refined Littlewood identities and off-diagonally symmetric alternating sign matrices}
\label{sec:OSASM}

\subsection{A Pfaffian formula} Let $x_1,\dots,x_{2n}$ be indeterminates. The following Pfaffian formula is due to Laksov, Lascoux and Thorup \cite{llt} and Stembridge \cite{ste}:
\begin{align}
{\rm Pf}\left[ \frac{x_i-x_j}{1-x_i x_j} \right]_{1\leq i < j \leq 2n}
=
\prod_{1 \leq i<j \leq 2n} \frac{ x_i-x_j}{1-x_i x_j}.
\label{stem-pf}
\end{align}
One can prove this by factor exhaustion.

\subsection{Littlewood identities for Schur polynomials and symmetric plane partitions}
\label{schur-littlewood}

In contrast to the Cauchy identities studied so far, which are essentially unique within any particular family of symmetric functions, Littlewood identities come in greater abundance \cite{mac}. For example, in the family of the Schur polynomials alone, the following identities are known:
\begin{align}
\sum_{\lambda} s_{\lambda}(x_1,\dots,x_n)
&=
\prod_{1 \leq i<j \leq n}
\frac{1}{1-x_i x_j}\ 
\prod_{i=1}^{n}
\frac{1}{1-x_i},
\label{s-little1}
\\
\sum_{\lambda\ \text{even}} s_{\lambda}(x_1,\dots,x_n)
&=
\prod_{1 \leq i < j \leq n}
\frac{1}{1-x_i x_j}\ 
\prod_{i=1}^{n}
\frac{1}{1-x_i^2},
\label{s-little2}
\\
\sum_{\lambda'\ \text{even}} s_{\lambda}(x_1,\dots,x_n)
&=
\prod_{1 \leq i<j \leq n}
\frac{1}{1-x_i x_j}.
\label{s-little3}
\end{align}
It is possible to regard the left hand side of each of these identities as a generating series of {\it symmetric plane partitions}, \ie plane partitions whose entries satisfy $\pi(i,j) = \pi(j,i)$ for all $i,j$. The only subtlety is a possible restriction imposed on the central slice of these plane partitions, which varies according to the particular identity. Let us define the set of symmetric plane partitions
\begin{align*}
\bm{\pi}^{\text{s}}_n
=
\{\emptyset \equiv \lambda^{(0)} \prec \lambda^{(1)} \prec \cdots \prec \lambda^{(n)}
\succ \cdots \succ \lambda^{(1)} \succ \lambda^{(0)} \equiv \emptyset \} 
\end{align*}
and consider the left hand side of equations \eref{s-little1}--\eref{s-little3} as a sum over the elements in $\bm{\pi}^{\text{s}}_n$:

\begin{itemize} 

\item In the case of \eref{s-little1}, the sum is over all symmetric plane partitions, with no restriction on the central slice:
\begin{align*}
\sum_{\pi \in \bm{\pi}^{\text{s}}_n}
\
\prod_{i=1}^{n} x_i^{|\lambda^{(i)}|-|\lambda^{(i-1)}|}
=
\prod_{1 \leq i<j \leq n}
\frac{1}{1-x_i x_j}\ 
\prod_{i=1}^{n}
\frac{1}{1-x_i}.
\end{align*}
As with ordinary (unsymmetric) plane partitions, one can consider $q$-specializations of the variables. Setting $x_i = q^{2(n-i)+1}$, it is possible to weight the plane partitions by their volume:
\begin{align*}
\sum_{\pi \in \bm{\pi}^{\text{s}}_n}
q^{|\pi|}
=
\prod_{1 \leq i<j \leq n}
\frac{1}{1-q^{2(i+j-1)}}
\prod_{i=1}^{n}
\frac{1}{1-q^{2i-1}}.
\end{align*}

\item In the case of \eref{s-little2}, one sums over symmetric plane partitions whose central slice is an {\it even partition}, \ie a partition with only even parts:
\begin{align*}
\sum_{\substack{ \pi \in \bm{\pi}^{\text{s}}_n \\ \pi(k,k)\ \text{even} }}
\
\prod_{i=1}^{n} x_i^{|\lambda^{(i)}|-|\lambda^{(i-1)}|}
=
\prod_{1 \leq i < j \leq n}
\frac{1}{1-x_i x_j}\ 
\prod_{i=1}^{n}
\frac{1}{1-x_i^2}.
\end{align*}

\item In the final case \eref{s-little3}, the sum is over symmetric plane partitions subject to the condition that all connected components crossing the central slice must be of even width:
\begin{align}
\label{s-little3-pp-gs}
\sum_{\substack{ \pi \in \bm{\pi}^{\text{s}}_n \\ \pi(2k-1,2k-1) = \pi(2k,2k) }}
\
\prod_{i=1}^{n} x_i^{|\lambda^{(i)}|-|\lambda^{(i-1)}|}
=
\prod_{1 \leq i<j \leq n}
\frac{1}{1-x_i x_j}.
\end{align}
See Figure \ref{fig:sym-pp} for an example of a symmetric plane partition which satisfies this criterion. Of the three Littlewood identities listed, \eref{s-little3} will turn out to be most relevant for our purposes.

\end{itemize}
  
\begin{figure}
\includegraphics[scale=0.4]{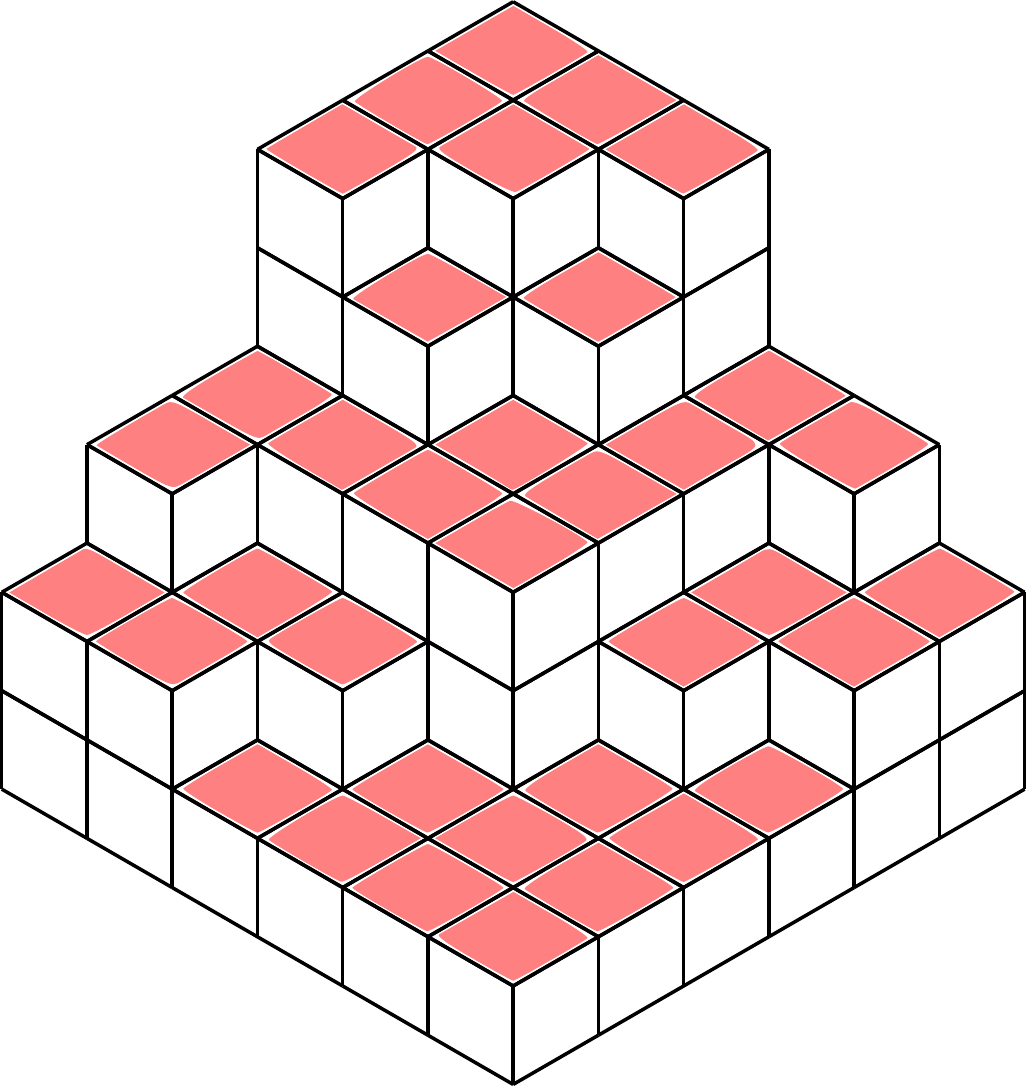}
\caption{A symmetric plane partition, with central diagonal slice $\lambda = (5,5,3,3,1,1)$ such that 
$\lambda' = (6,4,4,2,2)$ is even. Equivalently, all connected components crossing the central slice must have even width there.}
\label{fig:sym-pp}
\end{figure}

\subsection{Littlewood identities for Hall--Littlewood polynomials and associated plane partitions}

The three Littlewood identities presented in Section \ref{schur-littlewood} admit $t$-generalizations to Hall--Littlewood polynomials (see \cite{mac}, Chapter III, Section 5, Example 3):
\begin{align}
\label{HL-little1}
\sum_{\lambda}
P_{\lambda} (x_1,\dots,x_n;t)
&=
\prod_{1\leq i<j \leq n}
\frac{1-t x_i x_j}{1-x_i x_j}\ 
\prod_{i=1}^{n}
\frac{1}{1-x_i},
\\
\label{HL-little2}
\sum_{
\lambda\ \text{even}
}
P_{\lambda} (x_1,\dots,x_n;t)
&=
\prod_{1\leq i<j \leq n}
\frac{1-t x_i x_j}{1-x_i x_j}\
\prod_{i=1}^{n}
\frac{1}{1-x_i^2},
\\
\label{HL-little3}
\sum_{
\lambda'\ \text{even}
}
\ \
\prod_{i=1}^{\infty}\ \prod_{j=2,4,6,\dots}^{m_i(\lambda)}
(1-t^{j-1})
P_{\lambda}(x_1,\dots,x_n;t)
&=
\prod_{1\leq i<j \leq n}
\frac{1-t x_i x_j}{1-x_i x_j}.
\end{align}
All of these identities can be viewed as generating series of symmetric plane partitions, with an appropriate $t$-weighting. To specify these generating series precisely, we need to define two further statistics related to paths on plane partitions.
\begin{defn}[Paths (not) crossing the main diagonal]
Let $\pi$ be a symmetric plane partition. Let $2p^{\circ}_d(\pi)$ denote the number of paths in $\pi$ at depth $d$, which stay entirely within one half of $\pi$ and do not intersect with the main diagonal. Similarly, we let $p^{\bullet}_d(\pi)$ denote the number of paths in $\pi$ at depth $d$, which do intersect with the main diagonal.
\end{defn}
Armed with these definitions, we study the left hand side of the identities 
\eref{HL-little1}--\eref{HL-little3} from a combinatorial point of view: 

\begin{itemize}

\item In \eref{HL-little1}, each path that crosses the main diagonal {\it does not} receive a $t$-weighting. All other paths come in pairs (due to symmetry). Each pair of paths of depth $k$ receives a weight of $1-t^k$:
\begin{align*}
\sum_{\pi \in \bm{\pi}^{\text{s}}_n}
\
\prod_{i \geq 1}
(1-t^i)^{p^{\circ}_i(\pi)}
\prod_{i=1}^{n} x_i^{|\lambda^{(i)}|-|\lambda^{(i-1)}|}
=
\prod_{1\leq i<j \leq n}
\frac{1-t x_i x_j}{1-x_i x_j}\ 
\prod_{i=1}^{n}
\frac{1}{1-x_i}.
\end{align*}

\item The $t$-weighting in \eref{HL-little2} is the same as in \eref{HL-little1}, but the sum is taken over symmetric plane partitions with an even central slice:
\begin{align*}
\sum_{\substack{ \pi \in \bm{\pi}^{\text{s}}_n \\ \pi(k,k)\ \text{even} }}
\
\prod_{i \geq 1}
(1-t^i)^{p^{\circ}_i(\pi)}
\prod_{i=1}^{n} x_i^{|\lambda^{(i)}|-|\lambda^{(i-1)}|}
=
\prod_{1\leq i<j \leq n}
\frac{1-t x_i x_j}{1-x_i x_j}\
\prod_{i=1}^{n}
\frac{1}{1-x_i^2}.
\end{align*}
 
\item In \eref{HL-little3}, each path crossing the main diagonal at depth $k$, where $k$ is odd, receives a weight of $1-t^k$. Paths crossing the main diagonal with even depth do not receive a $t$-weighting. Each pair of paths (away from the main diagonal) of depth $k$ receives a weight of $1-t^k$, as usual:
\begin{align}
\label{hl-little3-pp-gs}
\sum_{\substack{ \pi \in \bm{\pi}^{\text{s}}_n \\ \pi(2k-1,2k-1) = \pi(2k,2k) }}
\
\prod_{i \geq 1}
(1-t^i)^{p^{\circ}_i(\pi)}
\prod_{j=1,3,5,\dots}
(1-t^j)^{p^{\bullet}_j(\pi)}
\prod_{i=1}^{n} x_i^{|\lambda^{(i)}|-|\lambda^{(i-1)}|}
=
\prod_{1 \leq i<j \leq n}
\frac{1-t x_i x_j}{1-x_i x_j}.
\end{align}
    
\end{itemize}

\subsection{Refined Littlewood identities}

Again analogously to Theorems \ref{thm1} and \ref{thm2}, we state a theorem (for Schur polynomials) and a conjecture (for Hall--Littlewood polynomials) which deal with the expansion of a certain Pfaffian (in this case, the partition function of OSASMs under certain weights) in those polynomials. Both are $t$-refinements of the Littlewood identity \eref{s-little3} (which can be obtained by setting $t=0$, and using the Pfaffian factorization \eref{stem-pf}). The conjecture is also a natural deformation of the Littlewood identity \eref{HL-little3} for Hall--Littlewood polynomials, while not being a true refinement thereof.

\begin{thm}
\label{thm4}
\begin{align}
\label{s-refined-little}
\sum_{\lambda'\ {\rm even}}\ 
\prod_{i=2,4,6,\dots}^{2n}
(1-t^{\lambda_i-i+2n+1})
s_{\lambda}(x_1,\dots,x_{2n})
=
\prod_{1\leq i<j \leq 2n}
\frac{1}{(x_i-x_j)}
{\rm Pf}
\left[
\frac{(x_i - x_j)(1-t)}
{
(1 - x_i x_j)
(1 - t x_i x_j) 
}
\right]_{1\leq i < j \leq 2n}.
\end{align}
 
\end{thm}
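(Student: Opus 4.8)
The plan is to mimic the proof of Theorem~\ref{thm1}, with the Cauchy--Binet identity replaced by its Pfaffian counterpart (de Bruijn's formula, equivalently the minor--summation formula of Pfaffians). First I would clear the denominator on the right of \eref{s-refined-little} by multiplying through by $\Delta(x)_{2n}:=\prod_{1\le i<j\le 2n}(x_i-x_j)$ and rewrite each $s_\lambda$ via the Weyl determinant formula, so that proving \eref{s-refined-little} becomes equivalent to the determinant/Pfaffian identity
\[
{\rm Pf}\left[\frac{(1-t)(x_i-x_j)}{(1-x_i x_j)(1-t x_i x_j)}\right]_{1\le i<j\le 2n}
=
\sum_{\lambda'\ \mathrm{even}}\ \prod_{i=2,4,\dots}^{2n}(1-t^{\lambda_i-i+2n+1})\ \det\left[x_i^{\lambda_j-j+2n}\right]_{1\le i,j\le 2n}.
\]

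Next I would expand the Pfaffian entry as a formal power series. Using the partial--fraction identity $\frac{1-t}{(1-u)(1-tu)}=\sum_{k\ge 0}(1-t^{k+1})u^k$ already exploited in the proof of Theorem~\ref{thm1} (with $u=x_ix_j$), one gets
\[
\frac{(1-t)(x_i-x_j)}{(1-x_ix_j)(1-tx_ix_j)}=\sum_{k\ge0}(1-t^{k+1})\big(x_i^{k+1}x_j^{k}-x_i^{k}x_j^{k+1}\big)=\sum_{a,b\ge0}\alpha_{a,b}\,x_i^{a}x_j^{b},
\]
where $(\alpha_{a,b})_{a,b\ge0}$ is the skew--symmetric array with $\alpha_{b+1,b}=1-t^{b+1}$ for $b\ge0$ and all other entries (with $a>b$) equal to zero. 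The numerator $x_i-x_j$ is precisely what lands the expansion in the skew sector, mirroring the way the numerators $y_j^{k+1}-\b y_j^{k+1}$ arose in the proof of Theorem~\ref{thm3}.

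Now I would invoke the Pfaffian minor--summation formula applied to the array $(x_i^{k})_{1\le i\le 2n,\ k\ge0}$ and the skew array $(\alpha_{a,b})$: term by term as formal power series (as in Theorems~\ref{thm1} and~\ref{thm3}), it gives
\[
{\rm Pf}\left[\sum_{a,b\ge0}\alpha_{a,b}x_i^a x_j^b\right]_{1\le i<j\le 2n}
=\sum_{k_1>k_2>\dots>k_{2n}\ge0}\det\left[x_i^{k_j}\right]_{1\le i,j\le 2n}\ {\rm Pf}\left[\alpha_{k_i,k_j}\right]_{1\le i<j\le 2n}.
\]
The combinatorial heart of the proof is the evaluation of the inner Pfaffian. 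Since $\alpha_{a,b}\ne0$ only for $|a-b|=1$, and since $k_1>\dots>k_{2n}$ forces $k_i-k_j\ge j-i$, the pair $(i,j)$ can contribute only when $j=i+1$ and $k_i=k_{i+1}+1$; hence the graph of admissible matchings is a subpath of $1\!-\!2\!-\!\cdots\!-\!2n$, so a perfect matching exists if and only if $k_{2m-1}=k_{2m}+1$ for every $m=1,\dots,n$, in which case it is unique and equal to $\{(1,2),(3,4),\dots,(2n-1,2n)\}$, yielding ${\rm Pf}[\alpha_{k_i,k_j}]=\prod_{m=1}^{n}(1-t^{k_{2m}+1})$. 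Substituting $k_j=\lambda_j-j+2n$, the range $k_1>\dots>k_{2n}\ge0$ becomes a sum over partitions with at most $2n$ parts, the condition $k_{2m-1}=k_{2m}+1$ reads $\lambda_{2m-1}=\lambda_{2m}$, i.e.\ every column of $\lambda$ has even length ($\lambda'$ even), the weight becomes $\prod_{i=2,4,\dots}^{2n}(1-t^{\lambda_i-i+2n+1})$, and $\det[x_i^{k_j}]=\Delta(x)_{2n}\,s_\lambda(x_1,\dots,x_{2n})$. Dividing by $\Delta(x)_{2n}$ yields \eref{s-refined-little}; setting $t=0$ collapses every factor to $1$ and recovers \eref{stem-pf}. (An alternative, parallel to the first proof of Theorem~\ref{thm1}, would act on the Littlewood identity \eref{s-little3} with Macdonald's operator $D_{2n}(-t;t,t)$, but the formal power series argument is shorter.)

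The step I expect to be the main obstacle is not the power--series expansion, which is routine, but setting up the Pfaffian analogue of Cauchy--Binet with the correct sign conventions and then carrying out the bookkeeping for ${\rm Pf}[\alpha_{k_i,k_j}]$: one must verify that the vanishing pattern of this auxiliary Pfaffian is \emph{exactly} the even--columns restriction on $\lambda$, and that the surviving weight is precisely the one on the left of \eref{s-refined-little}. The $t=0$ specialization, which must reproduce the known identity \eref{stem-pf}, is a convenient check that pins down any residual sign ambiguity.
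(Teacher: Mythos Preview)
Your proposal is correct and follows essentially the same route as the paper: expand the Pfaffian entry as $\sum_{k\ge 0}(1-t^{k+1})(x_i^{k+1}x_j^k-x_i^kx_j^{k+1})$, apply the Pfaffian analogue of Cauchy--Binet (what the paper cites as \eref{cb-analog2} from the appendix, i.e.\ the minor--summation formula), evaluate the auxiliary Pfaffian using that the skew array is supported on $|a-b|=1$, and change to partition indices. The only cosmetic differences are that the paper indexes with increasing $1\le s_1<\dots<s_{2n}$ rather than your decreasing $k_1>\dots>k_{2n}\ge0$, and it simply states the factorization of ${\rm Pf}[\delta_{s_i+1,s_j}(1-t^{s_i})]$ where you give the perfect--matching explanation for it.
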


\begin{proof}
The proof proceeds along similar lines to the proof of Theorem \ref{thm1}. We treat the entries of the Pfaffian as formal power series:
\begin{align*}
{\rm Pf}
\left[
\frac{(x_i - x_j)(1-t)}
{
(1 - x_i x_j)
(1 - t x_i x_j) 
}
\right]_{1 \leq i<j \leq 2n}
&=
{\rm Pf}
\left[
(x_i - x_j)
\sum_{k=0}^{\infty}
(1-t^{k+1})
x_i^k x_j^k
\right]_{1 \leq i<j \leq 2n}
\\
&=
{\rm Pf}
\left[
\sum_{0\leq k < l}
\delta_{l,k+1} (1-t^{k+1}) 
(x_i^l x_j^k - x_i^k x_j^l)
\right]_{1 \leq i<j \leq 2n}.
\end{align*}
From here, we use the Pfaffian analogue of the Cauchy--Binet identity \eref{cb-analog2} given in Appendix A, with $m = 2n$, $M \rightarrow \infty$ and $A_{kl} = -\delta_{l,k+1} (1-t^k)$. We conclude that
\begin{align*}
{\rm Pf}
\left[
\frac{(x_i - x_j)(1-t)}
{
(1 - x_i x_j)
(1 - t x_i x_j) 
}
\right]_{1 \leq i<j \leq 2n}
=
\sum_{1 \leq s_1 < \cdots < s_{2n}}
(-1)^{n}
\prod_{i=2,4,6,\dots}^{2n}
\Big\{
(1-t^{s_{i-1}})\ \delta_{s_{i-1},s_i-1}
\Big\}
\det\left[
x_i^{s_j-1}
\right]_{1\leq i,j \leq 2n}
\end{align*}
where we have used the factorization of the Pfaffian
\begin{align*}
{\rm Pf}
\Big[
\delta_{s_i+1,s_j}
(1-t^{s_i})
\Big]_{1 \leq i<j \leq 2n} 
= 
\prod_{i=2,4,6,\dots}^{2n} 
(1-t^{s_{i-1}})\ \delta_{s_{i-1},s_i-1}.
\end{align*}
Making the change of summation indices $s_{2n-i+1} = \lambda_i - i + 2n+1$, this becomes
\begin{align*}
{\rm Pf}
\left[
\frac{(x_i - x_j)(1-t)}
{
(1 - x_i x_j)
(1 - t x_i x_j) 
}
\right]_{1 \leq i<j \leq 2n}
&=
\sum_{\substack{ 
\lambda_1 \geq \cdots \geq \lambda_{2n} \geq 0 \\ \lambda_{2k-1} = \lambda_{2k} 
}}
\
(-1)^n
\prod_{i=2,4,6,\dots}^{2n}
(1-t^{\lambda_i-i+2n+1})
\det\left[
x_i^{\lambda_{2n-j+1}+j-1}
\right]_{1\leq i,j \leq 2n}
\\
&=
\sum_{\lambda'\ \text{even}}
\
\prod_{i=2,4,6,\dots}^{2n}
(1-t^{\lambda_i-i+2n+1})
\det\left[
x_i^{\lambda_j-j+2n}
\right]_{1\leq i,j \leq 2n}
\end{align*}
and the proof is complete after dividing both sides by the Vandermonde $\Delta(x)_{2n}$.

\end{proof}

The next conjecture (which again has been checked in Mathematica for partitions of small size) we plan to address and prove in a subsequent paper with P.~Zinn-Justin. One would hope a proof which involved some appropriate difference operators existed, but we have not been able to construct said operators. 

\begin{conj}
\label{conj2}
\begin{multline}
\label{osasm-conj}
\sum_{
\lambda'\ {\rm even}
}
\ \
\prod_{i=0}^{\infty}\ \prod_{j=2,4,6,\dots}^{m_i(\lambda)}
(1-t^{j-1})
P_{\lambda}(x_1,\dots,x_{2n};t)
\\
=
\prod_{1 \leq i<j \leq 2n}
\frac{(1-t x_i x_j)}{(x_i-x_j)}
{\rm Pf}
\left[
\frac{(x_i-x_j)(1-t)}{(1-x_i x_j) (1-t x_i x_j)}
\right]_{1\leq i < j \leq 2n}.
\end{multline}
\end{conj}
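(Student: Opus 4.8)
The plan is to imitate the derivations of Theorems~\ref{thm1}--\ref{thm3}, in which each refined identity is produced by applying a Macdonald-type difference operator to a ``classical'' Cauchy or Littlewood identity; here the classical input should be the Hall--Littlewood Littlewood identity \eref{HL-little3} taken in $2n$ variables. Comparing the weight on the left of \eref{osasm-conj} with that of \eref{HL-little3}, the only discrepancy is the $i=0$ factor $\prod_{j=2,4,\dots}^{m_0(\lambda)}(1-t^{j-1})$; since $\lambda'$ even forces $\ell(\lambda)=2r$ and hence $m_0(\lambda)=2(n-r)$ (on padding $\lambda$ to $2n$ parts), this factor equals $\prod_{k=0}^{n-r-1}(1-t^{2k+1})$, which depends only on $r=\ell(\lambda)/2$. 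So the first and crucial step is to construct a difference operator $\mathcal{D}$ on symmetric functions in $x_1,\dots,x_{2n}$ that is diagonal on the Hall--Littlewood polynomials $P_{\lambda}(x_1,\dots,x_{2n};t)$ with $\lambda'$ even, with exactly this eigenvalue. This is \emph{not} the eigenvalue of any single Macdonald operator $D_{2n}(z;0,t)$ of \eref{do-gs}: at $q=0$ that operator has eigenvalue $\prod_{k=0}^{m_0(\lambda)-1}(1+zt^k)$, equal at $z=-t$ to $\prod_{j=1}^{m_0(\lambda)}(1-t^j)$, namely all powers up to $m_0(\lambda)$ rather than only the odd ones. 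One therefore expects $\mathcal{D}$ to be a ``$q=t^2$'' analogue pulled back along the folding $\lambda=(\mu_1,\mu_1,\mu_2,\mu_2,\dots)\leftrightarrow\mu$ intrinsic to the ``even columns'' condition, or a $q\to 0$ degeneration of a Koornwinder/Noumi--Sahi operator adapted to the relevant symmetric space. Constructing this operator and proving its diagonality with the stated eigenvalue is the main obstacle on this route --- indeed it is precisely the operator the authors report being unable to find.

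Granting such a $\mathcal{D}$, the remaining step runs exactly as in the proof of Theorem~\ref{thm2}: expand $\mathcal{D}$ as a signed sum over subsets $S\subseteq[2n]$ of operators rescaling $x_i\mapsto tx_i$ for $i\in S$, apply the Pfaffian factorization \eref{stem-pf} to each resulting term (playing the role of the Cauchy determinant \eref{cauch-det} in Theorem~\ref{thm2}), and resum, using the Pfaffian analogue of Cauchy--Binet from Appendix~A for the bookkeeping; tracking how the rescaled variables pair against the unrescaled ones should reconstruct the single Pfaffian on the right of \eref{osasm-conj}, while the prefactor $\prod_{i<j}(1-tx_ix_j)/(x_i-x_j)$ assembles out of Vandermonde-type factors just as in Theorems~\ref{thm2} and~\ref{thm4}.

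Should the operator prove elusive, two alternatives are worth pursuing. First, one can reduce directly to the already-proved Theorem~\ref{thm4}: the right-hand side of \eref{osasm-conj} is exactly $\prod_{1\le i<j\le 2n}(1-tx_ix_j)$ times the right-hand side of \eref{s-refined-little}, so Conjecture~\ref{conj2} is equivalent to the purely symmetric-function identity
\begin{multline*}
\sum_{\lambda'\ \text{even}}\ \prod_{i=0}^{\infty}\ \prod_{j=2,4,6,\dots}^{m_i(\lambda)}(1-t^{j-1})\,P_{\lambda}(x_1,\dots,x_{2n};t)
=\\
\Big(\prod_{1\le i<j\le 2n}(1-tx_ix_j)\Big)\,
\sum_{\mu'\ \text{even}}\ \prod_{i=2,4,6,\dots}^{2n}(1-t^{\mu_i-i+2n+1})\,s_{\mu}(x_1,\dots,x_{2n}),
\end{multline*}
which one would attack in the Schur basis: expand $\prod_{i<j}(1-tx_ix_j)$ via its classical Schur expansion (over partitions of Frobenius type $(\alpha\,|\,\alpha+1)$, with a sign and a power of $t$) together with the Littlewood--Richardson rule on the right, and $P_{\lambda}(x;t)=\sum_{\mu}K_{\mu\lambda}(t)\,s_{\mu}(x)$ on the left, the difficulty being to control the Kostka--Foulkes coefficients $K_{\mu\lambda}(t)$ with $\lambda'$ even well enough to match the two sides. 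Second, and probably cleanest, one can use the fact that the right-hand side is the OSASM domain-wall partition function $Z_{\rm OSASM}$ and invoke its recursive Izergin--Korepin/Kuperberg characterisation --- symmetry in $\{x_1,\dots,x_{2n}\}$, a polynomial degree bound after clearing the poles at $x_ix_j=1$ and $x_ix_j=1/t$, and a recursion relating $2n$ and $2n-2$ variables under a specialization such as $x_{2n}=1/x_{2n-1}$ or $x_{2n}=tx_{2n-1}$ --- and then check that the left-hand sum satisfies the same conditions; the recursion step is the crux, relying on the stability and Pieri-type behaviour of Hall--Littlewood polynomials, with care needed because the left-hand side is an infinite series and so meets the degree bound only after those poles are cleared.
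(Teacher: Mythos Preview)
The paper does not prove this statement: it is presented as Conjecture~\ref{conj2}, checked only numerically for small partitions, with the authors noting explicitly that ``one would hope a proof which involved some appropriate difference operators existed, but we have not been able to construct said operators.'' So there is no proof in the paper to compare against; your proposal should be read as a research plan rather than a proof to be checked for correctness.

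That said, your plan is well-aligned with the paper's own thinking. Your first route --- apply a tailored difference operator to \eref{HL-little3} --- is exactly what the authors describe as desirable in the Discussion, and you correctly identify that the standard $D_{2n}(-t;0,t)$ has the wrong eigenvalue (all powers of $t$ up to $m_0(\lambda)$, not just the odd ones); the paper records the same failure in a footnote. You are also right that constructing such an operator is the whole difficulty, so this route is not a proof but a restatement of the problem.

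Your second route (reduce to Theorem~\ref{thm4} via the factor $\prod_{i<j}(1-tx_ix_j)$) is a correct reformulation, but the resulting Schur-basis identity requires control of Kostka--Foulkes coefficients $K_{\mu\lambda}(t)$ for all $\lambda$ with $\lambda'$ even, which is not obviously tractable; you flag this yourself.

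Your third route --- the Izergin--Korepin/Kuperberg recursive characterisation of $Z_{\rm OSASM}$ --- is the most promising, and is in spirit the direction the authors indicate (``In a subsequent paper with P.~Zinn-Justin, we will prove Theorem~\ref{thm2} and Conjecture~\ref{conj2} in a unified way''). The genuine work here, as you note, is the recursion step: one must show that the left-hand sum, after clearing denominators, is a polynomial of the right degree and satisfies a two-variable reduction (e.g.\ under $x_{2n-1}x_{2n}=1$) to the $(2n-2)$-variable case. The degree bound is delicate because the sum over $\lambda$ is infinite, so polynomiality only emerges after multiplying through by $\prod_{i<j}(1-x_ix_j)$; and the recursion on the Hall--Littlewood side requires a branching or specialization formula for $P_\lambda$ that collapses the sum appropriately. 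None of this is carried out in your proposal, so what you have is a sound outline rather than a proof.
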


\subsection{Refined symmetric plane partitions}

As with the refined Cauchy identities in Section \ref{sec:refined-cauchy}, one can consider the interpretation of the refined Littlewood identities \eref{s-refined-little} and \eref{osasm-conj} at the level of plane partitions. The refinements that these identities produce are quite analogous to those seen in Section \ref{sec:refined-pp}. The first case \eref{s-refined-little} can be viewed as a $t$-refinement of the generating series \eref{s-little3-pp-gs}, which assigns a $t$-dependent weight to the central slice of the symmetric plane partition:
\begin{multline*}
\sum_{\substack{ \pi \in \bm{\pi}^{\text{s}}_{2n} \\ \pi(2k-1,2k-1) = \pi(2k,2k) }}
\
\prod_{i=2,4,6,\dots}^{2n}
(1-t^{\pi(i,i)-i+2n+1})
\prod_{j=1}^{2n} x_j^{|\lambda^{(j)}|-|\lambda^{(j-1)}|}
=
\\
\prod_{1\leq i<j \leq 2n}
\frac{1}{(x_i-x_j)}
{\rm Pf}
\left[
\frac{(x_i - x_j)(1-t)}
{
(1 - x_i x_j)
(1 - t x_i x_j) 
}
\right]_{1\leq i < j \leq 2n}.
\end{multline*}
The second case \eref{osasm-conj} is a natural modification of the generating series \eref{hl-little3-pp-gs}, which now takes into consideration paths at height 0.
\begin{defn}
Let $\pi$ be a symmetric plane partition whose base is contained within a $2n \times 2n$ square. We let $\tilde{p}^{\bullet}_d(\pi)_{2n \times 2n}$ denote the number of paths in $\pi$ with height $h \geq 0$ and depth $d$, and which intersect with the main diagonal. The definition of a path at height 0 is the same as that given in Definition \ref{defn:height0}.
\end{defn}
The conjecture \eref{osasm-conj} translates to the modified generating series
\begin{multline}
\label{sym-pp-OSASM}
\sum_{\substack{ \pi \in \bm{\pi}^{\text{s}}_{2n} \\ \pi(2k-1,2k-1) = \pi(2k,2k) }}
\
\prod_{i \geq 1}
(1-t^i)^{p^{\circ}_i(\pi)}
\prod_{j=1,3,5,\dots}
(1-t^j)^{\tilde{p}^{\bullet}_j(\pi)_{2n \times 2n}}
\prod_{i=1}^{2n} x_i^{|\lambda^{(i)}|-|\lambda^{(i-1)}|}
=
\\
\prod_{1 \leq i<j \leq 2n}
\frac{(1-t x_i x_j)}{(x_i-x_j)}
{\rm Pf}
\left[
\frac{(x_i-x_j)(1-t)}{(1-x_i x_j) (1-t x_i x_j)}
\right]_{1\leq i < j \leq 2n}.
\end{multline}
For an illustration of this modification on a typical symmetric plane partition, see Figure 
\ref{fig:modified-symm-pp}.

\begin{figure}
\begin{tabular}{ccc}
\includegraphics[scale=0.4]{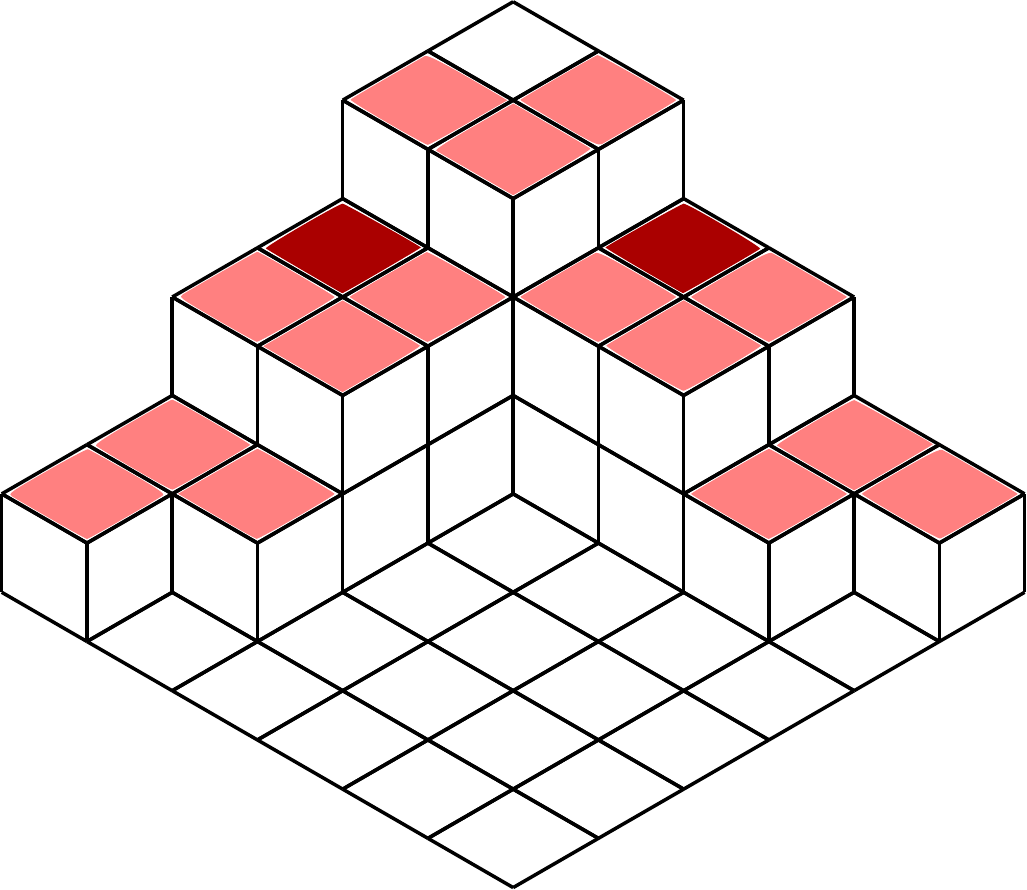}
& &
\includegraphics[scale=0.4]{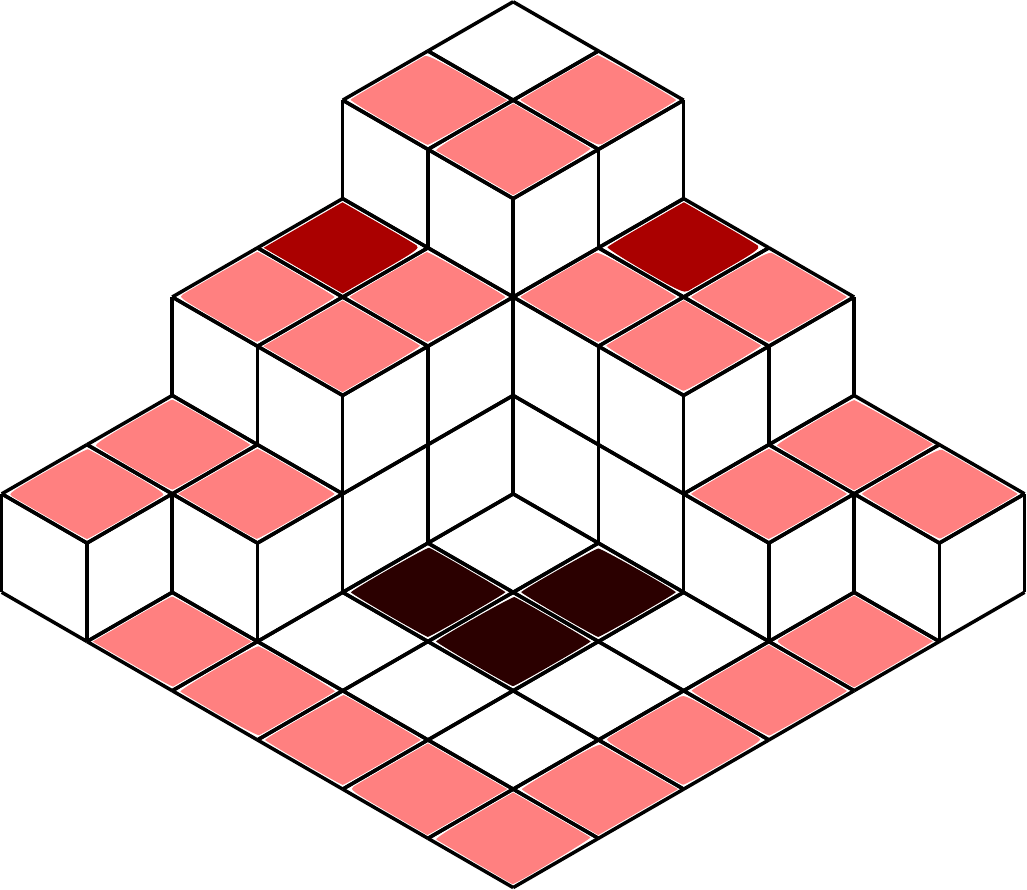}
\end{tabular}
\caption{On the left, a path-weighted symmetric plane partition $\pi$. Using equation \eref{hl-little3-pp-gs}, it receives a $t$-weighting of $\prod_{i \geq 1} (1-t^i)^{p_i^{\circ}(\pi)} \prod_{j=1,3,5,\dots} (1-t^j)^{p_j^{\bullet}(\pi)} = (1-t)^3 (1-t^2)$. On the right, the additional height-0 paths which must be taken into consideration when studying the modified Littlewood identity \eref{sym-pp-OSASM}. The new weighting is $(1-t)^4 (1-t^2) (1-t^3)$.}
\label{fig:modified-symm-pp}
\end{figure}

\subsection{Six-vertex model on OSASM lattice}

In \cite{kup2}, Kuperberg introduced several new types of partition functions within the framework of the six-vertex model. Among these was the {\it off-diagonally symmetric} partition function, which can be viewed as a DWPF whose configurations are constrained to be symmetric about a central diagonal axis, and to have no $c_{\pm}$ vertices along that axis. The configurations of this partition function are in one-to-one correspondence with so-called off-diagonally symmetric ASMs (OSASMs). In order to properly specify this partition function, one needs to introduce a further pair of vertices, these being the {\it corner vertices} shown in Figure \ref{fig:corner}.

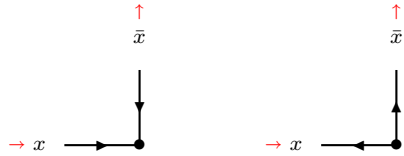
\begin{figure}[H]
\begin{tabular}{cc}
\begin{tikzpicture}[>=stealth]
\draw[thick]
(0,0) to (1,0) to (1,1);
\node at (1,0) {$\bullet$};
\node at (0.5,0) {\r};
\node at (1,0.5) {\d};
\node[label={left: \fs ${\color{red} \shortrightarrow} \ x$}] at (0,0) {};
\node[label={above: \fs ${\color{red} \begin{array}{c}  \shortuparrow \\ {\color{black} \b{x}} \end{array} }$}] at (1,1) {};
\end{tikzpicture}
\quad\quad\quad&
\begin{tikzpicture}[>=stealth]
\draw[thick]
(0,0) to (1,0) to (1,1);
\node at (1,0) {$\bullet$};
\node at (0.5,0) {\l};
\node at (1,0.5) {\u};
\node[label={left: \fs ${\color{red} \shortrightarrow} \ x$}] at (0,0) {};
\node[label={above: \fs ${\color{red} \begin{array}{c}  \shortuparrow \\ {\color{black} \b{x}} \end{array} }$}] at (1,1) {};
\end{tikzpicture}
\end{tabular}
\caption{The corner vertices, which in this work are both assigned the trivial Boltzmann weight 1.}
\label{fig:corner}
\end{figure}

The corner vertices, together with the six vertices introduced in Section \ref{sec:6v}, satisfy yet another variant of the Yang--Baxter relation, shown in Figure \ref{fig:corner-reflect}. This relation is schematically very similar to the reflection equation discussed in Section \ref{sec:U-vert}.

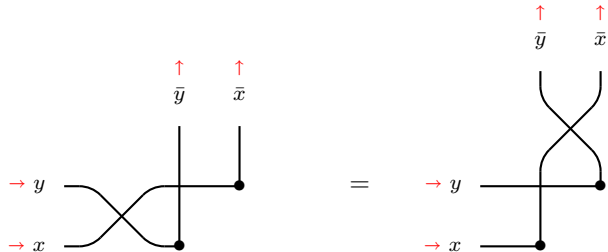
\begin{figure}[H]
\begin{tikzpicture}[scale=0.4]

\draw[thick,smooth] (-0.5,0)--(0,0);
\draw[thick,smooth] (-0.5,2)--(0,2);
\draw[thick, smooth] (0,2) arc (90:45:1);
\draw[thick, smooth] (0,0) arc (-90:-45:1);
\draw[thick, smooth] ({sqrt(2)/2},{1+sqrt(2)/2})--({3*sqrt(2)/2},{1-sqrt(2)/2});
\draw[thick, smooth] ({sqrt(2)/2},{1-sqrt(2)/2})--({3*sqrt(2)/2},{1+sqrt(2)/2});
\draw[thick, smooth] ({2*sqrt(2)},2) arc (90:135:1);
\draw[thick, smooth] ({2*sqrt(2)},0) arc (-90:-135:1);
\draw[thick, smooth] ({2*sqrt(2)}, 2)--({2*sqrt(2)+0.5}, 2);
\draw[thick, smooth] ({2*sqrt(2)}, 0)--({2*sqrt(2)+0.5}, 0);
\draw[thick, smooth] ({2*sqrt(2)+0.5}, 0)--({2*sqrt(2)+0.5}, 4);
\draw[thick, smooth] ({2*sqrt(2)+0.5+2}, 2)--({2*sqrt(2)+0.5+2}, 4);
\draw[thick, smooth] ({2*sqrt(2)+0.5}, 2)--({2*sqrt(2)+0.5+2}, 2);
\node at ({2*sqrt(2)+0.5}, 0) {$\bullet$};
\node at ({2*sqrt(2)+0.5+2}, 2) {$\bullet$};
\node[label={left: \fs ${\color{red} \shortrightarrow} \ x$}] at (-0.5, 0) {};
\node[label={left: \fs ${\color{red} \shortrightarrow} \ y$}] at (-0.5, 2) {};
\node[label={above: \fs ${\color{red} \begin{array}{c}  \shortuparrow \\ {\color{black} \b{y}} \end{array} }$}] at ({2*sqrt(2)+0.5},4) {};
\node[label={above: \fs ${\color{red} \begin{array}{c}  \shortuparrow \\ {\color{black} \b{x}} \end{array} }$}] at ({2*sqrt(2)+0.5+2},4) {};

\node at ({2*sqrt(2) + 2 + 0.5 + 4}, 2) {$=$};

\draw[thick, smooth] ({2*sqrt(2) + 2 + 0.5 + 8}, 0)--({2*sqrt(2) + 2 + 0.5 + 10}, 0);
\draw[thick, smooth] ({2*sqrt(2) + 2 + 0.5 + 8}, 2)--({2*sqrt(2) + 2 + 0.5 + 12}, 2);
\draw[thick, smooth] ({2*sqrt(2) + 2 + 0.5 + 10}, 0)--({2*sqrt(2) + 2 + 0.5 + 10}, 2);
\draw[thick, smooth] ({2*sqrt(2)+2+0.5 + 2 + 4 + 6},{2  + 2*sqrt(2) + 1})--({2*sqrt(2)+2+0.5 + 2 + 4 + 6},{2-0.5 + 2*sqrt(2) + 1});
\draw[thick, smooth] ({2*sqrt(2)+2+0.5 + 2 + 2 + 6},{2 + 2*sqrt(2) + 1})--({2*sqrt(2)+2+0.5 + 2 + 2 + 6},{2-0.5 + 2*sqrt(2) + 1});
\draw[thick, smooth] ({2*sqrt(2)+2+0.5 + 2 + 2 + 6},{2-0.5 + 2*sqrt(2) + 1}) arc (-180:-135:1);
\draw[thick, smooth] ({2*sqrt(2)+2+0.5 + 2 + 4 + 6},{2-0.5 + 2*sqrt(2) + 1}) arc (0:-45:1);
\draw[thick, smooth] ({2*sqrt(2)+2+0.5 + 2 + 2 + 1 + 6 - sqrt(2)/2},{2-0.5-sqrt(2)/2 + 2*sqrt(2) + 1})--({2*sqrt(2)+2+0.5 + 2 + 2 + 1 + 6 + sqrt(2)/2},{2-0.5-3*sqrt(2)/2 + 2*sqrt(2) + 1});
\draw[thick, smooth] ({2*sqrt(2)+2+0.5 + 2 + 2 + 1 + 6 - sqrt(2)/2},{2-0.5-3*sqrt(2)/2 + 2*sqrt(2) + 1})--({2*sqrt(2)+2+0.5 + 2 + 2 + 1 + 6 + sqrt(2)/2},{2-0.5-sqrt(2)/2 + 2*sqrt(2) + 1});
\draw[thick, smooth] ({2*sqrt(2)+2+0.5 + 2 + 2 + 6},{2-0.5-2*sqrt(2) + 2*sqrt(2) + 1}) arc (180:135:1);
\draw[thick, smooth] ({2*sqrt(2)+2+0.5 + 2 + 4 + 6},{2-0.5-2*sqrt(2) + 2*sqrt(2) + 1}) arc (0:45:1);
\draw[thick, smooth]  ({2*sqrt(2)+2+0.5 + 2 + 2 + 6},{2-0.5-2*sqrt(2) + 2*sqrt(2) + 1})-- ({2*sqrt(2)+2+0.5 + 2 + 2 + 6},{2-1-2*sqrt(2) + 2*sqrt(2) + 1});
\draw[thick, smooth]  ({2*sqrt(2)+2+0.5 + 2 + 4 + 6},{2-0.5-2*sqrt(2) + 2*sqrt(2) + 1})-- ({2*sqrt(2)+2+0.5 + 2 + 4 + 6},{2-1-2*sqrt(2) + 2*sqrt(2) + 1});
\node at ({2*sqrt(2) + 2 + 0.5 + 10}, 0) {$\bullet$};
\node at ({2*sqrt(2) + 2 + 0.5 + 12}, 2) {$\bullet$};

\node[label={left: \fs ${\color{red} \shortrightarrow} \ x$}] at ({2*sqrt(2) + 2 + 0.5 + 8}, 0) {};
\node[label={left: \fs ${\color{red} \shortrightarrow} \ y$}] at ({2*sqrt(2) + 2 + 0.5 + 8}, 2) {};
\node[label={above: \fs ${\color{red} \begin{array}{c} \shortuparrow \\ {\color{black} \b{y}} \end{array} }$}] at ({2*sqrt(2)+2+0.5 + 2 + 2 + 6},{2 + 2*sqrt(2) + 1}) {};
\node[label={above: \fs ${\color{red} \begin{array}{c} \shortuparrow \\ {\color{black} \b{x}} \end{array} }$}] at ({2*sqrt(2)+2+0.5 + 2 + 4 + 6},{2 + 2*sqrt(2) + 1}) {};

\end{tikzpicture}
\caption{Reflection equation for corner vertices. The interpretation of this equation is analogous to that of Figure \ref{fig:refl}: the four external edges are assigned fixed arrows, while the internal edges are summed over. This produces $2^4$ equations. The rapidity variable associated to a line is reciprocated once the line crosses the dot $\bullet$ on a corner vertex.}
\label{fig:corner-reflect}
\end{figure} 
The off-diagonally symmetric partition function $Z_{\rm OSASM}$ depends on a single set of variables 
$\{x_1,\dots,x_{2n}\}$, of even cardinality. We give its graphical definition in Figure \ref{fig:osasm}. Using the regular Yang--Baxter equation in conjunction with the reflection equation for corner vertices, one can immediately deduce the symmetry of $Z_{\rm OSASM}$ in the variables $\{x_1,\dots,x_{2n}\}$.
\begin{figure}[H]
\begin{tikzpicture}[scale=0.6,>=stealth]
\foreach\x in {1,...,6}{
\draw[thick, smooth] (0,\x) -- (\x,\x);
\node at (0.5, \x) {\r};
}

\foreach\x in {1,...,6}
\node[label={left: \fs ${\color{red} \shortrightarrow} \ x_{\x}$}] at (0,7-\x) {};

\foreach\x in {1,...,6}{
\draw[thick]
(7-\x,7) -- (7-\x,7-\x);
\node at (\x,6.5) {\u};
}
\foreach\x in {1,...,6}
\node[label={above: \fs ${\color{red} \begin{array}{c}  \shortuparrow \\ {\color{black} \b{x}_{\x}} \end{array} }$}] at (7-\x,7) {};

\foreach\x in {1,...,6}
\node at (\x,\x) {$\bullet$};
\end{tikzpicture}
\caption{$Z_{\rm OSASM}$, the six-vertex model partition function on an off-diagonally symmetric lattice in the case $n=3$. We emphasize the fact that all vertical rapidities are reciprocated.}
\label{fig:osasm}
\end{figure}
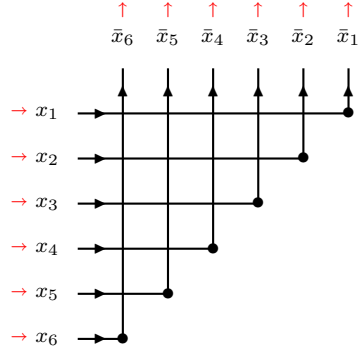

Kuperberg was able to evaluate $Z_{\rm OSASM}$ in closed form, as a Pfaffian \cite{kup2}:
\begin{align}
\label{kup-osasm}
Z_{\rm OSASM}(x_1,\dots,x_{2n};t)
=
\prod_{1 \leq i<j \leq 2n}
\frac{(1-t x_i x_j)}{(x_i-x_j)}
{\rm Pf}
\left[
\frac{(x_i-x_j)(1-t)}{(1-x_i x_j) (1-t x_i x_j)}
\right]_{1\leq i < j \leq 2n}.
\end{align}
Once again, this formula can be proved using an Izergin--Korepin type of approach, but for brevity we do not present these details here.

To draw a parallel with the analogous Sections \ref{sec:6v} and \ref{sec:U-vert}, we remark that $Z_{\rm OSASM}$ is a multiparameter generating series of the aforementioned OSASMs. It also appears, identically, on the right hand side of equation \eref{sym-pp-OSASM}. Hence \eref{sym-pp-OSASM} relates a 
path-weighted generating series for symmetric plane partitions with a generating series of OSASMs. This is the third such example relating these combinatorial objects (and their symmetry classes) that we have presented in this work. 

\subsection{Off-diagonal domains of odd size}
\label{ssec:p-osasm}

Thus far we only considered the case where the set of variables appearing in \eref{osasm-conj} has even cardinality. Clearly one can obtain a companion result for odd cardinalities, by specializing $x_{2n} = 0$ in \eref{osasm-conj}:
\newline

\noindent \textbf{Conjecture 2$\bm{'}$.}
\begin{multline}
\label{osasm-conj-pdwpf}
\sum_{\lambda'\ {\rm even}}
\
\prod_{j = 2,4,6,\dots}^{2n-\ell(\lambda)}
(1-t^{j-1})
\prod_{i=1}^{\infty}
\prod_{j = 2,4,6,\dots}^{m_i(\lambda)}
(1-t^{j-1})
P_{\lambda}(x_1,\dots,x_{2n-1};t)
=
\\
\frac{
(1-t)^n
\prod_{1 \leq i<j \leq 2n-1}
(1-t x_i x_j)
}
{
\prod_{i=1}^{2n-1}
(x_i)
\prod_{1 \leq i<j \leq 2n-1}
(x_i-x_j)
}
{\rm Pf}\Big[
\mathcal{O}_{i,j}
\Big]_{1 \leq i<j \leq 2n},  
\end{multline}
\textit{where the entries of the Pfaffian are given by}
\begin{align*}
\mathcal{O}_{i,j}
=
\frac{(x_i-x_j)}{(1-x_i x_j)(1-t x_i x_j)},
\ \ 
i<j<2n,
\quad\quad\quad\quad
\mathcal{O}_{i,j}
=
x_i,
\ \ 
i<j=2n.
\end{align*}
At the level of the six-vertex model, this specialization has an analogous interpretation to that discussed in Sections \ref{ssec:p-dwpf} and \ref{ssec:p-uasm}. One obtains a common total weight of $(1-t)$ for all possible configurations of the first column in Figure \ref{fig:osasm}, and we can simply delete this column from the lattice at the expense of this factor. The left external edges of the resulting odd-size lattice are summed over all possible arrow configurations. We illustrate this partition function in Figure \ref{fig:p-osasm}, and denote it by $Z_{\rm OSASM}(x_1,\dots,x_{2n-1};t)$. Cancelling a factor of $(1-t)$ from the right hand side of \eref{osasm-conj-pdwpf}, it is equal to $Z_{\rm OSASM}(x_1,\dots,x_{2n-1};t)$.

\begin{figure}[H]
\begin{tikzpicture}[scale=0.6,>=stealth]
\foreach\x in {1,...,5}{
\draw[thick, smooth] (0,\x) -- (\x,\x);
}

\foreach\x in {1,...,5}
\node[label={left: \fs ${\color{red} \shortrightarrow} \ x_{\x}$}] at (0,6-\x) {};

\foreach\x in {1,...,5}{
\draw[thick, smooth]
(6-\x,6) -- (6-\x,6-\x);
\node at (\x,5.5) {\u};
}
\foreach\x in {1,...,5}
\node[label={above: \fs ${\color{red} \begin{array}{c}  \shortuparrow \\ {\color{black} \b{x}_{\x}} \end{array} }$}] at (6-\x,6) {};

\foreach\x in {1,...,5}
\node at (\x,\x) {$\bullet$};
\end{tikzpicture}
\caption{$Z_{\rm OSASM}(x_1,\dots,x_{2n-1};t)$ in the case $n=3$, which results from setting $x_{2n}=0$ in Figure \ref{fig:osasm}. The left external edges are blank, to indicate that they are summed over all possible arrow configurations.}
\label{fig:p-osasm}
\end{figure}
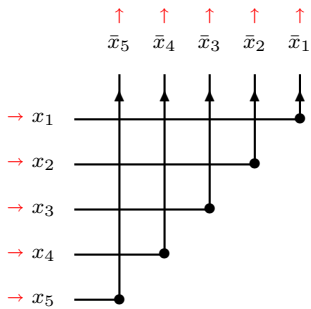

%
 

\section{Discussion}

In this paper we have discussed Cauchy and Littlewood identities, and simple refinements thereof, which can be evaluated in closed form. The expressions obtained on the right hand side of the new identities are partition functions of the six-vertex model, on suitable domains. We think that these results raise a number of questions and directions for further research, some of which we list below:   

{\bf 1.} It is interesting to observe some common structural features between the objects which equations \eref{s-uasm} and \eref{osasm-conj} relate. In the case of \eref{s-uasm}, the relationship is between symplectic plane partitions and the UASM partition function. The symplectic plane partitions have twice as many slices to the right of the main diagonal as to the left, with weightings that depend on $y_i$ and $\b{y}_i$ alternatingly. A similar structure is observed in the UASM partition function, which has twice as many horizontal lines as vertical lines, with alternating rapidities $x_i$ and $\b{x}_i$. In the case of \eref{osasm-conj}, the relationship is between symmetric plane partitions and the OSASM partition function. Both of these objects have a reflection symmetry about a central axis. While these similarities may be sheer coincidence, they might equally have a deeper explanation that could help in finding new correspondences between plane partitions and other symmetry classes of ASMs.

{\bf 2.} Since Theorem \ref{thm2} can be proved using the action of Macdonald's difference operators \eref{do} on the Cauchy identity \eref{hl-cauch2}, it would be aesthetically pleasing to obtain similar proofs of Conjectures \ref{conj1} and \ref{conj2} using appropriate difference operators. In the case of Conjecture \ref{conj2}, in particular, it is tempting to postulate the existence of difference operators\footnote{
Unfortunately, as is easily checked, acting on \eref{HL-little3} with the usual Macdonald difference operators \eref{do} {\it does not} give rise to the desired equation \eref{osasm-conj}.
} which act on the Littlewood identity \eref{HL-little3} to produce equation \eref{osasm-conj}. The situation seems more complicated in the case of Conjecture \ref{conj1}, since in that case we do not even know of a suitable Cauchy identity involving the $BC_n$-symmetric Hall--Littlewood polynomials, upon which we could act. It is possible that the Cauchy identity for $\tilde{K}_{\lambda}$ \eref{Ktilde-cauchy} and equation \eref{uasm-conj} are related by a suitable operation, but correctly identifying and classifying the properties of such difference operators is beyond the scope of this work.    

{\bf 3.} Are symplectic plane partitions worthy of further study in their own right, and do they have a 
well defined limit shape? Can one say anything about the associated particle processes and correlation functions? Moreover, in the $t$-deformed case, is there a (reasonable) branching rule for Rains' $\tilde{K}_{\lambda}$ polynomials and/or $BC_n$-symmetric Hall--Littlewood polynomials in terms of tableaux that would lead to plane partitions analogous to those of Vuleti\'c? Arguably here, providing an explicit formula for the branching rule for the $BC_n$-symmetric Hall--Littlewood polynomials would be an achievement in itself independent of the context.

\section*{Acknowledgments}

We would like to thank Andrea Sportiello for suggesting the idea of extending the identity \eref{knw-id} to ASM symmetry classes, which strongly motivated this work; Paul Zinn-Justin for explaining a technique for proving Theorem \ref{thm2} and Conjecture \ref{conj2}; and Eric Rains and Ole Warnaar for instructive discussions. This work was done under the support of the ERC grant 278124, ``Loop models, integrability and combinatorics''.

\appendix

\section{Analogue of Cauchy--Binet identity for Pfaffians}

The results presented in this appendix are taken from \cite{iw}. Let $T_{ij}$ denote the entries of an arbitrary $m \times M$ matrix, and $A_{ij}$ the entries of an $M\times M$ antisymmetric matrix, where $m \leq M$ and $m$ is even. Then
\begin{align*}
\sum_{\substack{
S \subseteq [M] \\ |S| = m
}}
{\rm Pf}[A_S] \det[T_S]
=
{\rm Pf}[T A T^{\rm t}]
=
{\rm Pf}
\left[
\sum_{1\leq k < l \leq M}
A_{kl}
\left|
\begin{array}{cc}
T_{ik} & T_{il}
\\
T_{jk} & T_{jl}
\end{array}
\right|
\right]_{1 \leq i<j \leq m}
\end{align*}
where the final identity follows from explicit calculation of the entries of the antisymmetric matrix 
$TAT^{\rm t}$. Changing notation slightly, we have 
\begin{align}
\sum_{1 \leq s_1 < \cdots < s_m \leq M}
{\rm Pf}
\left[
A_{s_i,s_j}
\right]_{1 \leq i<j \leq m}
\det
\left[
T_{i,s_j}
\right]_{1 \leq i,j \leq m}
=
{\rm Pf}
\left[
\sum_{1\leq k < l \leq M}
A_{kl}
(T_{ik} T_{jl} - T_{il} T_{jk})
\right]_{1 \leq i<j \leq m}.
\label{cb-analog1}
\end{align}
Of particular interest is the special case $T_{ij} = x_i^{j-1}$, when equation \eref{cb-analog1} becomes
\begin{align}
\sum_{1 \leq s_1 < \cdots < s_m \leq M}
{\rm Pf}
\left[
A_{s_i,s_j}
\right]_{1 \leq i<j \leq m}
\det
\left[
x_i^{s_j-1}
\right]_{1 \leq i,j \leq m}
&=
{\rm Pf}
\left[
\sum_{1 \leq k < l \leq M}
A_{kl} (x_i^{k-1} x_j^{l-1} - x_i^{l-1} x_j^{k-1})
\right]_{1 \leq i < j \leq m}.
\label{cb-analog2}
\end{align}
In principle, equation \eref{cb-analog2} allows a Pfaffian of a general bilinear function in $x_i$ and $x_j$ to be expanded in the basis of Schur polynomials.

\bibliographystyle{abbrv}
\bibliography{PPASM}

\end{document}